\documentclass[12pt,reqno]{amsart}
\usepackage{amsmath,amsfonts,color,amsthm, amssymb,amsbsy}
\usepackage{enumitem}
\usepackage{soul}
\usepackage{multicol}
\usepackage[dvips]{graphicx}
\allowdisplaybreaks

\numberwithin{equation}{section}
\makeatletter
\newcommand{\rom}[1]{\romannumeral #1}
\newcommand{\Rom}[1]{\expandafter\@slowromancap\romannumeral #1@}
\makeatother

\newtheorem{thm}{Theorem}

\newtheorem{prop}[thm]{Proposition}
\newtheorem{lemma}[thm]{Lemma}
\newtheorem{corr}[thm]{Corollary}

\newtheorem{rmk}{Remark}
\newtheorem*{ex}{Example}
\newtheorem*{no}{Notation}

\textwidth 6.5truein
\textheight 8.5truein

\newcommand\dis{\displaystyle}
\def\p{\partial}

\def\bf{\boldsymbol}
\newcommand\sk{\medskip\noindent}
\def\ep{\varepsilon}
\def\O{\mathcal{O}}

\oddsidemargin=-.25truein \evensidemargin=-.25truein
\bibliographystyle{abbrv}

\begin{document}

\title[Blow-up solutions to the REP equations]{Necessary conditions for blow-up solutions to the restricted Euler--Poisson equations}
\author{Hailiang Liu and Jaemin Shin}
\address{Department of Mathematics, Iowa State University, Ames, IA 50011, United States} \email{hliu@iastate.edu}
\address{Department of Mathematical Sciences \& Institute for Applied Mathematics and Optics, Hanbat National University, Daejeon 34158, Korea}\email{jaemin.shin@hanbat.ac.kr}

\keywords{Restricted Euler--Poisson dynamics, Blow-up solutions, Asymptotic behaviors.}
\subjclass[2010]{34C11, 35Q35.}

\begin{abstract}
In this work, we study the behavior of blow-up solutions to the multidimensional restricted Euler--Poisson equations
which are the localized version of  the full Euler--Poisson system. We provide necessary conditions for the existence of finite-time blow-up solutions in terms of the initial data, and describe the asymptotic behavior of the solutions near blow up times. We also identify a rich set of the initial data which yields global bounded solutions.
\end{abstract}

\maketitle

\section{Introduction and main results}
\sk In this paper, we consider the following ordinary differential equation (ODE) system
\begin{subequations}\label{main}
\begin{align}
&\lambda_i ' = -\lambda_i^2 + \frac{k}{n}(\rho-c_b), \quad i = 1, 2, \cdots, n, \quad t>0, \label{lambda}\\
& \rho' = -\rho \lambda, \quad \lambda = \sum_{i=1}^n \lambda_i, \label{rho}\\
& \rho(0)=\rho_0>0,  \quad \lambda_i(0)=\lambda_{i,0},
\end{align}
where ${}'$ is the derivative in time $t$, $k, c_b$ are positive parameters, and $n\geq 2$ is an integer.  This system
 proposed in \cite{LiuTad02} is a localized version of the Euler--Poisson equations, hence called the restricted Euler--Poisson (REP) system in the literature. We assume that the initial data for $\lambda_i$ are real and satisfy the order condition
\begin{equation}\label{J}
\lambda_{1,0}=\cdots = \lambda_{J,0} < \lambda_{J+1, 0} \leq \cdots \leq \lambda_{n,0}.
\end{equation}
\end{subequations}
Here, we introduce a quantity $1 \leq J \leq n$ with which we characterize
the number of the initial $\lambda_{i}$ coinciding with $\lambda_{1, 0}$.
The order of $\lambda_i$'s is known to be preserved (see \cite{LiuTad02, LeeLiu13} and Lemma \ref{lem:oreder}).  The purpose of this work is to identify necessary conditions for the existence of blow-up solutions to this REP system, and study the detailed solution behavior near the blow-up time.

\sk
To understand the physical meaning of each term, we recall the full Euler--Poisson equations for the velocity field $\bf{u}$ and local density $\rho$,
\begin{subequations}\label{EP}
\begin{align}
&\bf{u}_t +  \bf{u}\cdot \nabla \bf{u} = k \nabla \Delta^{-1} (\rho -c_b),\; x\in \mathbb{R}^n, \; t>0, \\
& \rho_t + \nabla \cdot(\rho \bf{u}) = 0,
\end{align}
\end{subequations}
where the constant $k$ represents a repulsive ($k > 0$) or attractive ($k < 0$) force, and $c_b$ denotes the background state. This system \eqref{EP} describes the dynamic behavior of several important physical flows, including those for semi-conductors, plasma physics, and the collapse of stars (see \cite{HJL81, M86, MRS90, BRR94, BW98, DLYTY02}). The existence and behaviors of solutions for \eqref{EP} and related problems have been extensively studied under various assumptions;  see, e.g., \cite{JR00, ELT01, CT08, WTB12, CCTT16} and references therein. In particular, in \cite{LiuTad02} Liu and Tadmor introduced the method of spectral dynamics, which serves as a powerful tool to study dynamics of the velocity gradient $M=\nabla \bf{u}$ along particle paths. Indeed, \eqref{EP}
can be converted into
\begin{align*}
&M' = - M^2 + k \mathcal{R}[\rho -c_b],\\
&\rho' = - \rho \text{tr}M,
\end{align*}
where $'$ is the convective derivative, $\p_t + \bf{u}\cdot \nabla$,
and $\mathcal{R}$ is the Riesz matrix operator,
\begin{equation*}
\mathcal{R}[f]:= \nabla \otimes \nabla \Delta^{-1}[f].
\end{equation*}
It is the global nature of the Riesz matrix, $\mathcal{R}[\rho -c_b]$,
which makes the issue of regularity for Euler--Poisson equations such an intricate question to solve, both analytically and numerically.  In this paper we focus on the REP equation for $M$ which was proposed
in  \cite{LiuTad02} by restricting attention to the local isotropic trace, $\frac{k}{n}(\rho-c_b)
I_{n\times n}$, of the global coupling term $k\mathcal{R}[\rho-c_b]$ , i.e.,
\begin{subequations}\label{REP_M}
\begin{align}
&M' = - M^2 + \frac{k}{n} (\rho - c_b) I_{n\times n},\\
&\rho' = - \rho \text{tr}M.
\end{align}
\end{subequations}
This is a matrix Ricatti equation for the $n\times n$ matrix $M$, coupled with the density equation,
which should mimic the dynamics of $(\rho, \nabla \bf{u})$ in the full Euler--Poisson equations.
The REP system \eqref{main} for the eigenvalue $\lambda_i$ of  $M$ follows from \eqref{REP_M}.
We note that the REP system \cite{LiuTad02} is to the full Euler--Poisson equations what the restricted Euler
(RE) model is to the full Euler equations, while the RE system is known useful in understanding 
the local topology of the Euler dynamics; we refer the reader to \cite{Vi82, BP95, Can92, CPS98}.

\sk The existence of a critical threshold phenomenon associated with this 2D REP
model with zero background, $c_b=0$, was first identified in \cite{LiuTad02}.  A precise description of the critical threshold for the 2D REP system, with both zero and nonzero background charges, was given in \cite{LiuTad03}. These results have been extended to  multi-dimensional REP equations by Lee and Liu in \cite{LeeLiu13}. While  Lee identified upper-thresholds for finite time blow-up solutions to an improved REP equation in two dimensions (\cite{Lee17}). It is worth mentioning that critical thresholds for restricted Euler equations were studied in \cite{LTW10} and \cite{Wei11}.

\sk In this work, we attempt to advance our understanding of the critical threshold phenomenon by providing necessary conditions for the existence of finite-time blow-up solutions to the REP system \eqref{main}.
Our results thus provide a complement to the existing results in \cite{LiuTad02, LiuTad03, Lee17} for REP systems.

\sk  In order to see the subtleness of the problem, we 
recall that a movable essential singularity cannot be achieved for a first-order scalar differential equation $u' = F(t, u)$, as long as $F$ is a rational function of $u$ with coefficients that are algebraic functions of $t$ (\cite{Hill97}). However, this is not the case for the system of equations considered here. In other words, the singularity types of solutions $\lambda$ and $\rho$ of \eqref{main} are not known a priori. This is one of the main difficulties with this problem, because we cannot simply utilize some balance equations to analyze the behavior of solutions near a singular point. To overcome this difficulty, we transform the Riccati-type equations \eqref{lambda} 
into second-order linear differential equations for
\begin{equation*}
u_i(t) = e^{\int_0^t \lambda_i(s) ds}.
\end{equation*}
By analyzing the general solution to the second-order differential equation, we are able to reveal the behavior of $u_i$, which also provides information on the behavior of $\lambda_i'$. Indeed, we can characterize the asymptotic behaviors of $\lambda_i$ and $\rho$ near the blow-up time by the gap of the initial data $\lambda_{i,0}$ together with $\rho_0$.

\sk The quantity $J$ defined in \eqref{J} is critical in terms of different solution behaviors of $\lambda_i$.
We state our main results in the following.  

\begin{thm}\label{mainthm} Suppose that the maximum interval of existence for \eqref{main} 
is $[0,t_B)$ for some $0<t_B<\infty$. Then
\begin{equation}\label{main:J}
1 \leq J \leq \frac n2 \quad \text{and}
\end{equation}
\begin{equation}\label{estimation_tB}
   t_B \geq \frac{1}{\omega} \arctan \left(\frac{\lambda_{1,0}}{\omega}\right) + \frac{\pi}{2\omega}, \quad \omega:=\sqrt{kc_b/n}.
   \end{equation}
Moreover,
\begin{subequations}\label{lambda_inf}
\begin{align}
\lim_{t\to t_B^-} \lambda_{i}(t) &= -\infty,  \quad 1 \leq i \leq J, \\  \lim_{t\to t_B^-} \lambda_{i}(t) &= \infty, \quad  J < i \leq n,\\
\lim_{t\to t_B^-} \rho(t) &= \infty,
\end{align}
\end{subequations}
and also
\begin{subequations}\label{keylemma}
\begin{eqnarray}
&\dis\lim_{t\to t_B^-} \lambda_1(t) e^{\int_0^t \lambda_1(s)+ \lambda_n(s) ds } = -p,\\
&\dis\lim_{t\to t_B^-} \lambda_n(t) e^{\int_0^t \lambda_1(s)+ \lambda_n(s) ds } = q,
\end{eqnarray}
\end{subequations}
for some $0 \leq q \leq p$.
\end{thm}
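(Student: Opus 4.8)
The plan is to use the substitution $u_i(t)=e^{\int_0^t\lambda_i(s)\,ds}$ from the introduction. Since $u_i'=\lambda_iu_i$ and $u_i''=(\lambda_i'+\lambda_i^2)u_i=\frac kn(\rho-c_b)u_i$ by \eqref{lambda}, every $u_i$ solves the \emph{same} second-order linear equation $u''=\frac kn(\rho-c_b)u$, which has no first-order term; hence each pairwise Wronskian $W_{ij}:=u_iu_j'-u_i'u_j$ is constant in $t$. Since $W_{ij}=u_iu_j(\lambda_j-\lambda_i)$, evaluating at $t=0$ gives the identity $u_i(t)u_j(t)(\lambda_j(t)-\lambda_i(t))=\lambda_{j,0}-\lambda_{i,0}$ on $[0,t_B)$. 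Taking $(i,j)=(1,n)$ and writing $\delta:=\lambda_{n,0}-\lambda_{1,0}>0$ (positive because $J\le n/2<n$ by \eqref{main:J}), this reads $u_1u_n(\lambda_n-\lambda_1)=\delta$; in particular $e^{\int_0^t(\lambda_1+\lambda_n)\,ds}=u_1u_n$, so the two quantities in \eqref{keylemma} are exactly
\begin{equation*}
\lambda_1\,e^{\int_0^t(\lambda_1+\lambda_n)\,ds}=\lambda_1u_1u_n=u_1'u_n,\qquad \lambda_n\,e^{\int_0^t(\lambda_1+\lambda_n)\,ds}=\lambda_nu_1u_n=u_1u_n'.
\end{equation*}
Because $u_1u_n'-u_1'u_n=W_{1n}=\delta$ while $u_1u_n'+u_1'u_n=(u_1u_n)'$, we have $u_1'u_n=\frac12((u_1u_n)'-\delta)$ and $u_1u_n'=\frac12((u_1u_n)'+\delta)$; therefore the whole statement reduces to showing that $m:=\lim_{t\to t_B^-}(u_1u_n)'$ exists, after which one sets $p:=\frac{\delta-m}2$ and $q:=\frac{\delta+m}2$, so that $p+q=\delta$ automatically.

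Granting this limit, the inequalities $0\le q\le p$ are easy. First, by \eqref{lambda_inf} we have $\lambda_n(t)\to+\infty$, hence $u_1u_n'=\lambda_nu_1u_n>0$ near $t_B$ (recall $u_1,u_n>0$), so $q=\lim u_1u_n'\ge0$, i.e.\ $m\ge-\delta$. Second, from $u_1u_n=\delta/(\lambda_n-\lambda_1)$ and $\lambda_n-\lambda_1\to+\infty$ we get $u_1u_n\to0^+$; a positive function that tends to $0$ and whose derivative converges must have that limit $\le0$ (otherwise it would be eventually increasing and stay bounded away from $0$), so $m\le0$, i.e.\ $q\le\delta/2\le p$. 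Hence $0\le q\le p$.

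It remains to prove that $\lim_{t\to t_B^-}(u_1u_n)'$ exists, which is the heart of the matter. The idea is to exploit that $u_1$ behaves like a recessive solution of $u''=\frac kn(\rho-c_b)u$: near $t_B$ one has $\lambda_1<0$ (so $u_1'=\lambda_1u_1<0$) and $\rho>c_b$ (so $u_1''>0$), hence $u_1$ is positive, decreasing and convex there; consequently $u_1\to0$ and $u_1'\to0$ (if either limit were nonzero, $u_1$ would be driven to $-\infty$), and in particular $u_1u_1'\to0$. Since $W_{1n}=\delta\ne0$, reduction of order gives $(u_n/u_1)'=\delta/u_1^2$, so $u_n/u_1=(u_n/u_1)(t_0)+\delta\int_{t_0}^tu_1^{-2}\,ds$; as $u_1\to0$ while $u_n$ stays bounded below by a positive constant (it is eventually increasing, since $u_n'=\lambda_nu_n>0$ near $t_B$), we get $u_n/u_1\to+\infty$, hence $\int_{t_0}^{t_B}u_1^{-2}\,ds=+\infty$. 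Therefore
\begin{equation*}
u_1'u_n=(u_1u_1')\Bigl((u_n/u_1)(t_0)+\delta\!\int_{t_0}^tu_1^{-2}\,ds\Bigr)=o(1)+\delta\,\frac{\int_{t_0}^tu_1^{-2}\,ds}{1/(u_1u_1')},
\end{equation*}
an $\infty/\infty$ indeterminate form (the denominator $\to-\infty$ because $u_1u_1'=\lambda_1u_1^2\to0^-$). Applying l'Hôpital's rule and using $u_1u_1''=\frac kn(\rho-c_b)u_1^2$, the ratio of derivatives simplifies to $-(u_1')^2/\bigl((u_1')^2+\frac kn(\rho-c_b)u_1^2\bigr)=-\bigl(1+\frac kn(\rho-c_b)/\lambda_1^2\bigr)^{-1}$. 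Consequently, once one shows that
\begin{equation*}
L:=\lim_{t\to t_B^-}\frac{\frac kn(\rho-c_b)}{\lambda_1^2}
\end{equation*}
exists in $[0,+\infty]$, it follows that $u_1'u_n\to-\delta/(1+L)$ and $u_1u_n'\to\delta L/(1+L)$, completing the proof with $p=\delta/(1+L)$ and $q=\delta L/(1+L)$ (and $L\le1$ is then forced by $q\le p$).

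The main obstacle is thus the existence of $L$. Since $\frac kn(\rho-c_b)=\lambda_1'+\lambda_1^2$ by \eqref{lambda}, this amounts to showing $\lim_{t\to t_B^-}\lambda_1'/\lambda_1^2$ exists, i.e.\ controlling the precise rate at which the smallest eigenvalue escapes to $-\infty$. This is exactly the sharp asymptotic information on $u_1$ that the analysis of the associated second-order linear equation (alluded to in the introduction) is meant to supply: concretely I would extract from that equation, together with the blow-up rate of $\rho$, an estimate for $u_1$ near $t_B$ of the form $u_1(t)\sim c\,(t_B-t)^{\mu}$ up to lower-order corrections — so that $\lambda_1(t)\sim-\mu/(t_B-t)$ and $\lambda_1'/\lambda_1^2$ has a definite limit — and then feed this back into the l'Hôpital computation above. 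I expect the delicate point to be cancellations among the leading singular parts of $\lambda_1$ and $\lambda_n$, so that pinning down $L$ (equivalently $q$) requires their subleading asymptotics, not merely the leading power.
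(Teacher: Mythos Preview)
Your reduction via the Wronskian is exactly the paper's starting point, and your observation that the two quantities in \eqref{keylemma} are $u_1'u_n$ and $u_1u_n'$, together with $u_1u_n'-u_1'u_n=\delta$, is correct and matches the paper's Theorem~\ref{prop15}. Your argument for $0\le q\le p$ (via $u_1u_n\to 0^+$ forcing $m\le 0$) is in fact cleaner than the paper's, which instead argues by contradiction that $q>p$ would force $\lambda_n'<\lambda_1'$ eventually.

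Two points, one minor and one serious.

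\textbf{Minor.} The claim ``$u_1'\to 0$'' is false in general: the paper isolates a case (labelled \eqref{t_case1}) in which $u_1'(t_B)=-\alpha_1<0$ and $u_n(t_B)=\beta_n>0$, and then the convergence of $u_1'u_n$ is immediate. Your reasoning ``otherwise $u_1$ would be driven to $-\infty$'' only works on an infinite time interval. Fortunately, what you actually use downstream is $u_1u_1'\to 0$, and that does hold (since $u_1\to 0$ and $u_1'$ is bounded, being increasing and negative near $t_B$); so this slip does not break the l'H\^opital setup.

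\textbf{Serious.} The l'H\^opital reduction to the existence of
\[
L=\lim_{t\to t_B^-}\frac{\tfrac kn(\rho-c_b)}{\lambda_1^2}=\lim_{t\to t_B^-}\Bigl(1+\frac{\lambda_1'}{\lambda_1^2}\Bigr)
\]
is formally valid, but you have not proved that $L$ exists, and your proposed route---extract a power-law asymptotic $u_1(t)\sim c(t_B-t)^\mu$ and plug it back in---is circular. In the paper those asymptotics (Sections~4--5, culminating in Theorem~\ref{mainthm+}) are \emph{consequences} of the existence of $p,q$; they are not available beforehand. Indeed, the whole architecture of the paper is: first prove $\lim u_1'u_n$ exists, then split into $p>q$ and $p=q$, and only then derive the exponents $\xi_1,\xi_n$ for $\lambda_1,\lambda_n$. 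Your plan asks for those exponents in order to get $\lim u_1'u_n$, which reverses the logical order.

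The paper's device for breaking this circularity is different from l'H\^opital and is worth noting: using \eqref{eqn_u} one integrates $u_1''u_n+\omega^2u_1u_n=g_{1n}$ to get
\[
u_1'u_n-\lambda_{1,0}+\omega^2\!\int_0^t u_1u_n = \int_0^t\bigl(u_1'u_n'+g_{1n}\bigr),
\]
so convergence of $u_1'u_n$ is equivalent to convergence of the right-hand side. In the nontrivial case \eqref{t_case2} the paper substitutes the Duhamel formulas \eqref{general_u1}--\eqref{general_u2} for $u_1',u_n'$, rewrites the right-hand side as a double integral, and checks that for $t$ close enough to $t_B$ the integrand has a sign; the resulting function is monotone and (by Corollary~\ref{cor_u1un}) bounded, hence convergent. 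This monotonicity argument is the missing ingredient that your l'H\^opital approach does not supply.

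Finally, your write-up takes \eqref{main:J} and \eqref{lambda_inf} for granted, but they are part of the theorem. The paper devotes Section~2 and the first half of Section~3 to them: ruling out oscillation via the differential inequality $\lambda_i'\ge -\lambda_i^2-\omega^2$, showing $\lambda_1\to -\infty$ and $\int\rho=\infty$, and then using the Wronskian to prove that no $\lambda_i$ can stay bounded (Theorem~\ref{prop10}). The bound $J\le n/2$ is only established later, via the case analysis of Theorem~\ref{mainthm+}, so it cannot be invoked at the outset as you do when asserting $\delta>0$; one should instead appeal to Lemma~\ref{lem_3}, which already gives $J<n$ directly from the Wronskian.
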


\begin{rmk}
An interesting feature of the behavior of $\lambda_i$ is that $\lambda_i$ diverges to $-\infty$ if and only if $\lambda_{i,0} = \lambda_{1,0}$ and all the other $\lambda_i$ diverges to $+\infty$. Moreover, $J$ cannot exceed $n/2$ and in the case of $J \geq 3$, $J$ is strictly smaller than $n/2$; see Theorem \ref{mainthm+}. The limits in \eqref{keylemma} indicate how $\lambda_i$ are connected  through
$p$ and $q$, which are mainly characterized by the gap of the initial data $\lambda_{i,0}$ together with $\rho_0$.
\end{rmk}
\noindent 
Our second theorem gives the detailed blow-up rates of solutions. We note that $\lambda_i(t) = \lambda_1(t)$ for $1 \leq i\leq J$ (see Lemma \ref{lem:oreder}).  

\begin{thm}\label{mainthm+} Under the hypothesis in Theorem \ref{mainthm},  the blow-up rates of singular solutions depend on the size of $J$, and can be made more precise as follows:
\begin{itemize}
\item[(\rom{1})] If $J=1$, then $n\geq 2$ and 
\begin{align*}
& \dis \lim_{t\to t_B^-} (t_B-t) \lambda_1 (t) =-1,\\
& \dis \lambda_i (t) = \O\left(|\ln (t_B-t)|\right) ~\text{as~} t\to t_B^-, \quad 2 \leq i \leq n,\\
& \dis \rho(t) = \O \Big(\frac{1}{t_B-t} \Big) ~ \text{as~} t\to  t_B^-.
\end{align*}

\item[(\rom{2})] If $J=2$, then  $n\geq 4$ and one of the following cases must hold:
\begin{itemize}
\item[(a)] 
If $n \geq 5$, then
\begin{align*}
& \dis \lim_{t\to t_B^-} (t_B-t) \lambda_1 (t) = -1,\\
& \dis \lim_{t\to t_B^-} (t_B-t) \lambda_i (t) = 0, ~\lim_{t\to t_B^-} \int_0^{t} \lambda_i(s) ds = \infty, \quad 3 \leq i \leq n,\\
& \dis \rho(t) = o \left(\frac{1}{(t_B-t)^2} \right) ~ \text{as~} t\to  t_B^-.
\end{align*}
\item[(b)] 
If $n=4$ and ${(\lambda_{1,0}-\lambda_{3,0})(\lambda_{1,0}-\lambda_{4,0})=:A_0 > {k \rho_0} }$, then
\begin{align*}
& \dis \lim_{t\to t_B^-} (t_B-t)\lambda_1 (t)  = -\frac{1}{2} -\frac{1}{2}\sqrt{\frac{A_0}{A_0 - k \rho_0}},\\
& \dis \lim_{t\to t_B^-} (t_B-t)\lambda_i (t)   = -\frac{1}{2} + \frac{1}{2}\sqrt{\frac{A_0}{A_0- k \rho_0}}, \quad i =3,4,\\
& \dis \rho(t) = \O \Big(\frac{1}{(t_B-t)^2} \Big) ~ \text{as~} t\to  t_B^-.
\end{align*}

\item[(c)] 
If $n=4$ and ${A_0 = {k \rho_0}}$, then there exists $C >0$ such that
\begin{align*}
& \dis \lim_{t\to t_B^-} (t_B -t)^2 \lambda_1 (t)  = -C,\\
& \dis \lim_{t\to t_B^-} (t_B -t)^2 \lambda_i (t)  = C, \quad i=3,4, \\
& \dis \lim_{t\to t_B^-} (t_B -t)^4 \rho(t) = \frac{k}{4}C^2.
\end{align*}
\end{itemize}
\item[(\rom{3})]
If $J\geq 3$, then $n > 2J$ and there exists $C > 1$ such that
\begin{align*}
& \dis \lim_{t\to t_B^-} (t_B-t)\lambda_1 (t)  = -C,  \\
& \dis \lim_{t\to t_B^-} (t_B-t)\lambda_i (t)  = C -1, \quad J+1 \leq i \leq n,\\
& \dis \rho(t) = \O \Big(\frac{1}{(t_B-t)^{2}} \Big) ~ \text{as~} t\to  t_B^-.
\end{align*}

\end{itemize}
\end{thm}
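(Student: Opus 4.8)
\textbf{Proof proposal for Theorem~\ref{mainthm+}.}

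The plan is to build everything on the substitution $u_i(t)=\exp\!\big(\int_0^t\lambda_i(s)\,ds\big)$, under which \eqref{lambda} becomes a second-order linear ODE whose coefficient involves $\rho-c_b$, coupled with \eqref{rho}. Since the ordering \eqref{J} is preserved (Lemma~\ref{lem:oreder}), we have $u_1=\cdots=u_J$ and $u_{J+1}\le\cdots\le u_n$, and the quantities in \eqref{keylemma} say precisely that $\lambda_1 u_1 u_n\to -p$ and $\lambda_n u_1 u_n\to q$ with $0\le q\le p$, while \eqref{lambda_inf} forces $u_i\to 0$ for $1\le i\le J$ and, after inspecting $\int_0^t\lambda_i$, controls the growth of $u_i$ for $i>J$. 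The first step is therefore to extract from Theorem~\ref{mainthm} the precise rate at which $u_1\to 0$: differentiating $u_1'=\lambda_1 u_1$ and using \eqref{keylemma} one gets $u_1'\sim -p/u_n$ near $t_B$, and a bootstrap (treating the $J$-fold coincidence and the number $n-J$ of ``$+\infty$'' eigenvalues as the combinatorial input) pins down whether $u_n$ tends to a positive limit, to $0$, or to $\infty$. The trichotomy in the statement is exactly the trichotomy for $\lim u_n$, and $n>2J$ (resp.\ the sharper $n\ge 5$ when $J=2$, and $n\ge 4$, $A_0$ vs.\ $k\rho_0$ split) is what decides which branch is consistent.

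Concretely, I would first treat $J=1$: here $u_1\to 0$ but $u_1^J=u_1$ only once, so $\rho=\rho_0\,u^{-1}=\rho_0(u_1^{J}\prod_{i>J}u_i)^{-1}$ is dominated by the single vanishing factor $u_1$; plugging the resulting $\rho\sim \rho_0/(u_1\prod_{i\ge 2}u_i)$ into the linear equation for $u_1$ and balancing shows $u_1\sim c\,(t_B-t)$, hence $(t_B-t)\lambda_1\to-1$, $\rho=\O((t_B-t)^{-1})$, and the remaining $\lambda_i=\O(|\ln(t_B-t)|)$ comes from integrating $\lambda_i'=-\lambda_i^2+\frac kn(\rho-c_b)$ against the now-known size of $\rho$. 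For $J\ge 2$ the same balancing in the $u_1$-equation gives $u_1$ vanishing like a power $(t_B-t)^{\beta}$ with $\beta$ determined by a quadratic indicial equation whose coefficients are built from $J$, $n$, and (when $J=2$, $n=4$) from $A_0$ and $k\rho_0$; the three sub-cases (a), (b), (c) for $J=2$ and the single regime for $J\ge 3$ correspond to the discriminant of that quadratic being positive with a ``generic'' root, positive with a specific root tied to $A_0/(A_0-k\rho_0)$, or zero (the degenerate case $A_0=k\rho_0$, which produces the slower $(t_B-t)^{-2}$ decay of $u_1$, i.e.\ the $(t_B-t)^2\lambda_1\to -C$ rate and $(t_B-t)^4\rho\to \frac k4 C^2$). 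The constraints $n\ge 4$ for $J=2$ and $n>2J$ for $J\ge 3$ drop out as the requirement that the relevant root $\beta$ be $>0$ (so $u_1\to 0$) while simultaneously $\lambda_n$ stays $+\infty$; that consistency check is also what upgrades $J\le n/2$ from Theorem~\ref{mainthm} to the strict $J<n/2$ when $J\ge 3$.

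The step I expect to be the main obstacle is making the heuristic ``balance'' rigorous, i.e.\ proving that $u_1$ (equivalently $u_n$, $\rho$) is genuinely asymptotic to the power it formally should be, rather than merely bounded between two powers. The difficulty is that the coefficient $\frac kn(\rho-c_b)$ in the linear ODE for $u_i$ is itself the unknown, and a priori one only controls it up to the $o$/$\O$ slack inherited from Theorem~\ref{mainthm}; closing the loop requires a Gr\"onwall-type argument on the ratio of $u_1$ to its candidate profile, or equivalently a monotonicity/invariant-region argument on a renormalized variable such as $(t_B-t)\lambda_1$. In the degenerate case (c) this is genuinely more delicate because the naive power-law ansatz fails at leading order and one must iterate once more (the ``there exists $C>0$'' in the statement is exactly the admission that the constant is produced by a fixed-point/continuity argument rather than written in closed form). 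A secondary technical point is verifying that the subdominant eigenvalues $\lambda_i$, $1<i\le n$ (and their integrals), behave as claimed: for these I would integrate their Riccati equations directly using the now-established size of $\rho$, and for the ``$\int_0^t\lambda_i\to\infty$'' assertions in case (\rom{2})(a) observe that $u_i=u^{-1}/(\rho_0^{-1}\rho\,\prod_{j\ne i}u_j)$ forces $u_i\to\infty$ once the product of the other factors is pinned down.
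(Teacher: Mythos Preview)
Your outline shares the paper's backbone (the $u_i$ transform, the limits \eqref{keylemma}, and a quadratic relation for the leading coefficient), but it misidentifies the organizing dichotomy and contains a factual slip. First, $u_n$ cannot tend to $0$: since $\lambda_n\to+\infty$, $u_n'=\lambda_n u_n>0$ near $t_B$, so $u_n$ is eventually increasing and either $u_n\to\beta_n\in(0,\infty)$ or $u_n\to\infty$ (this is Proposition~\ref{prop:case12}). Hence there is no ``trichotomy for $\lim u_n$''; the dichotomy $u_n\to\beta_n$ versus $u_n\to\infty$ separates case~(\rom{1}) from \emph{all} of (\rom{2})(a),(b),(c) and (\rom{3}). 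The split that actually drives the proof is $p>q$ versus $p=q$ from Theorem~\ref{prop15}. When $p>q$ one has $(u_1u_n)'\to q-p\neq 0$ and $(u_1u_n)(t_B)=0$, so $u_1u_n=(p-q)(t_B-t)+o(t_B-t)$; combined with the Abel identity $\lambda_1-\lambda_n=(\lambda_{1,0}-\lambda_{n,0})/(u_1u_n)$ and $\lambda_1+\lambda_n=(u_1u_n)'/(u_1u_n)$, this \emph{immediately} yields $\lambda_j=\xi_j/(t_B-t)+R_j$ with $\xi_1=-p/(p-q)$, $\xi_n=q/(p-q)$ and $(t_B-t)R_j\to 0$. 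The quadratic you anticipate then comes from substituting back into \eqref{lambda}; it is a consistency equation for the coefficient $\xi$, not a Frobenius indicial equation for a vanishing exponent of $u_1$ (indeed $u_1$ is not a pure power in (\rom{1}) or (\rom{2})(a)). The $J,n$ constraints arise from requiring the constant term $\frac{k\rho_0}{n}\lim(t_B-t)\int_0^t e^{-\int\lambda}$ to be finite, which forces $\gamma:=J\xi_1+(n-J)\xi_n\ge -2$.

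The degenerate case (\rom{2})(c) is not the ``discriminant zero'' of that quadratic; it is the case $p=q$, where $(u_1u_n)'\to 0$ and the $1/(t_B-t)$ ansatz for $\lambda_1,\lambda_n$ fails outright. Section~5 handles this by introducing $\eta:=-\tfrac12(\lambda_1+\lambda_n)$ and a second-order defect $\delta$, deriving a scalar Riccati equation $\eta'=\eta^2-\delta'$, and proving separately (via Lemma~\ref{lemma:claim2_2} on $u_1u_n^\theta$) that $p=q$ forces $J=2$, $n=4$, $A_0=k\rho_0$; the $(t_B-t)^{-2}$ rate comes from $\eta\sim(t_B-t)^{-1}$ and $u_1u_n\sim(t_B-t)^2$. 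Your ``iterate once more'' is the right instinct but substantially understates the analysis. Finally, the rigor device the paper uses is not Gr\"onwall on a ratio but Lemma~\ref{lemma:R}: any $R$ with $(t_B-t)R\to 0$ satisfies $(t_B-t)\int_0^t R^2\to 0$, $(t_B-t)\int_0^t R/(t_B-s)\,ds\to 0$, and $(t_B-t)^{\ep}/M<e^{-\int R}<M(t_B-t)^{-\ep}$, which is exactly what is needed to evaluate the constant term in the quadratic and to upgrade the $\O$/$o$ statements.
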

\begin{rmk} Note that for each $J$ specified in different cases, $n$ has to be in certain range so to  fulfill the
requirement that the maximum interval of existence for \eqref{main} be finite. 
\end{rmk}

\sk In contrast to the finite-time breakdown, the multi-dimensional REP equations admit a large class of global bounded solutions. Our  results are summarized below.
\begin{thm}\label{main3}
If
\begin{equation*}
J > \frac{n}{2},
\end{equation*}
or
\begin{equation*}
J \geq 3\quad \text{and} \quad J=\frac{n}{2},
\end{equation*}
then \eqref{main} 
has a global bounded solution.
\end{thm}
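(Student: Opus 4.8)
The theorem asserts two things: that the solution exists for all $t\ge 0$, and that it stays bounded. The first is immediate by contraposition. Theorem~\ref{mainthm} shows that a finite maximal existence time forces $1\le J\le n/2$, and Theorem~\ref{mainthm+}(\rom{3}) shows that if moreover $J\ge 3$ then $n>2J$, i.e. $J<n/2$; under either hypothesis of Theorem~\ref{main3} these necessary conditions fail, so the maximal existence interval is $[0,\infty)$. (In particular $J=1$ never occurs in this theorem, since $1>n/2$ is impossible for $n\ge 2$, so $J\ge 2$ throughout.)

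For boundedness I would use the reduction of the paper. With $v:=u_1=e^{\int_0^t\lambda_1(s)\,ds}$, Lemma~\ref{lem:oreder} gives $u_i=v$ for $i\le J$, while the constancy of $\rho\prod_i u_i$ and the Wronskian identities for the common equation $u_i''=\tfrac kn(\rho-c_b)u_i$ give $u_i=v(1+\delta_i g)$ for $i>J$, with $\delta_i:=\lambda_{i,0}-\lambda_{1,0}>0$ and $g(t):=\int_0^t v^{-2}$. Thus $\rho=\rho_0 v^{-n}\prod_{i>J}(1+\delta_i g)^{-1}$, the single function $v>0$ determines everything, and boundedness of the solution amounts to bounding $\lambda_1=(\ln v)'$, the gaps $\lambda_i-\lambda_1=\delta_i v^{-2}(1+\delta_i g)^{-1}$, and $\rho$. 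When $J=n$ (the only case when $n=2$) the product is empty, $v$ solves the autonomous equation $v''=\tfrac{k\rho_0}{n}v^{1-n}-\tfrac{kc_b}{n}v$, whose potential is confining at both ends (blowing up as $v\to 0^+$ and as $v\to\infty$, since $k,c_b,\rho_0>0$); conservation of $\tfrac12(v')^2+V(v)$ traps $v$ in a compact subinterval of $(0,\infty)$ with $v'$ bounded, and one is done. When $2\le J<n$ the corresponding functional is no longer conserved, but I would still work with $\mathcal E:=\tfrac12(v')^2+\tfrac{kc_b}{2n}v^2+\tfrac{k\rho_0}{n(n-2)}v^{2-n}\prod_{i>J}(1+\delta_i g)^{-1}$; a direct computation using $g'=v^{-2}$ shows the $v'$-linear terms cancel and $\mathcal E'=-\tfrac{k\rho_0}{n(n-2)}v^{-n}\prod_{i>J}(1+\delta_i g)^{-1}\sum_{i>J}\tfrac{\delta_i}{1+\delta_i g}\le 0$, so $0\le\mathcal E(t)\le\mathcal E(0)$. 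This already yields $v$ bounded above, $v'$ bounded, $\rho v^2$ bounded (equivalently $v\gtrsim(\prod_{i>J}(1+\delta_i g))^{-1/(n-2)}$), and — since $v$ bounded above forces $g\to\infty$ — also $\prod_{i>J}(1+\delta_i g)\to\infty$.

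The heart of the matter, and the step I expect to be the main obstacle, is to upgrade these a priori bounds to a two-sided control of the decay of $v$: I would aim to show $v(t)\asymp g(t)^{-(n-J)/n}$ as $t\to\infty$, which drives $\rho=\rho_0 v^{-n}\prod_{i>J}(1+\delta_i g)^{-1}$ to a finite limit, makes $\lambda_1=(\ln v)'$ bounded, and — using $\prod_{i>J}(1+\delta_i g)\sim g^{\,n-J}$ — makes the gaps $\lambda_i-\lambda_1\lesssim g^{\,2(n-J)/n-1}$ bounded; it is exactly here that $2(n-J)\le n$, i.e. $J\ge n/2$, is consumed, the borderline $J=n/2$ producing exponential rather than algebraic decay of $v$. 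I would obtain this two-sided estimate by pushing the analysis of the reduced second-order equation for $v$ — the same device the paper develops for the blow-up theorems — say by taking $g$ as the independent variable and setting $v=g^{-(n-J)/n}w$, so that the equation becomes asymptotically autonomous, and then showing the rescaled amplitude $w$ is confined to a compact subset of $(0,\infty)$. The energy functional supplies the upper bound and a (too weak) lower bound on $v$; the delicate point is the matching lower bound preventing $v$ from decaying faster than $g^{-(n-J)/n}$, and in the threshold case $J=n/2$ the extra hypothesis $J\ge 3$ is what is needed — it is precisely the slack that, via Theorem~\ref{mainthm+}(\rom{3}), already rules out finite-time blow-up and that one must now convert into a statement about the solution as $t\to\infty$.
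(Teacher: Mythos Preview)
Your contraposition for global existence is exactly the paper's argument: the necessary conditions $1\le J\le n/2$ from Theorem~\ref{mainthm} and, when $J\ge 3$, the sharper $n>2J$ from Theorem~\ref{mainthm+}(\rom{3}) are derived for finite-time blow-up, and Theorem~\ref{main3} is their contrapositive. That is in fact the whole of the paper's treatment of this theorem --- no separate proof is written out, and the result is presented as an immediate consequence of the blow-up analysis (see the remark just before the statement of Theorem~\ref{mainthm+} that \eqref{main:J} follows from Theorem~\ref{mainthm+}).

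In particular the paper supplies no argument for the word \emph{bounded} in the statement. Sections~2 through~5 deal exclusively with the finite-$t_B$ scenario; nowhere is it shown that a global solution stays bounded as $t\to\infty$. The sentence following the theorem points to \cite{LeeLiu13} for the special cases $J=n$ and $(n,J)=(3,3)$, so boundedness there may be imported, but the general case under the hypotheses of Theorem~\ref{main3} is not addressed in this text.

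Your energy-functional approach to boundedness therefore goes beyond anything in the paper. The computation $\mathcal E'\le 0$ is correct and yields $v$ bounded above, $v'$ bounded, and $\rho v^2$ bounded; but, as you already flag, the matching lower bound on $v$ --- equivalently, upper bounds on $\rho$ and on $|\lambda_1|$ --- is missing. Your target asymptotic $v\asymp g^{-(n-J)/n}$ is plausible heuristically, yet the sketch (change of variable to $g$, asymptotically autonomous rescaled equation) is a program rather than a proof, and the precise mechanism by which the extra hypothesis $J\ge 3$ at the threshold $J=n/2$ enters your long-time estimate is not made concrete. So the boundedness half remains open in your write-up, and the paper cannot serve as a benchmark here since it contains no such argument either.
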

\noindent This has improved upon some global existence results in  \cite{LeeLiu13}, in particular Theorem 2.3 (corresponding to $n=3, J=3$)
and Theorem 2.9 (corresponding to $J=n$) therein.

\sk The remainder of this paper is organized as follows. In Section 2, we first show that no highly oscillating solution exists (see \eqref{def2} for the definition of a highly oscillating solution). That is, we show that  $\rho$ and $|\lambda_i|$ diverge to $\infty$ for some $i$ when \eqref{main} admits a finite-time blow-up solution. We also provide a proof of \eqref{estimation_tB}. In Section 3, we transform \eqref{lambda} to a second-order linear differential equation, and demonstrate the solution behaviors of \eqref{keylemma}. To prove Theorem \ref{mainthm+} using \eqref{keylemma}, we consider the subcases 
\begin{equation*}
p>q \quad\text{and}\quad p=q.
\end{equation*}
We study the case with $p>q$ in Section 4. Here, we show that the coefficients of leading singular order terms for $\lambda_1$ and $\lambda_n$ can be represented as $-p/(p-q)$ and $q/(p-q)$, respectively. We also conclude that this case yields  (\rom{1}), (a) and (b) of (\rom{2}), or (\rom{3})
in Theorem \ref{mainthm+}. The last section deals with the case where $p=q$,  which implies
(c) of (\rom{2}) in Theorem \ref{mainthm+}. The main difficulty in this case lies in that the leading singular terms of $-\lambda_1$ and $\lambda_n$ are the same. {For this reason}, we have examined the second singular terms. We also provide explicit solutions to the REP system \eqref{main} assuming that $\lambda_{3,0}=\lambda_{4,0}$. We remark that \eqref{main:J} and \eqref{lambda_inf} follows from Theorem \ref{mainthm+}.   

\begin{no}
Throughout the paper we write
\begin{equation*}
f(x) = \O (g(x))\quad \text{as}~ x \to x_0^-,
\end{equation*}
if there are $M, \delta >0$ such that
\begin{equation*}
|f(x)| \leq M |g(x)| \quad \text{for~all~} x_0 - x < \delta.
\end{equation*}
Similarly, we write
\begin{equation*}
f(x) = o(g(x)) \quad \text{as}~ x \to x_0^-,
\end{equation*}
if for any $\ep>0$ there is $\delta>0$ such that
\begin{equation*}
|f(x)| \leq \ep |g(x)|\quad \text{for~all~} x_0 - x < \delta.
\end{equation*}
\end{no}

\section{Non-oscillating solutions}

\sk Suppose that $[0,t_B)$ is the maximum interval of existence of solutions to an ordinary differential equation (ODE) $ u' = F(t, u)$. Then, either
\begin{equation}\label{def1}
\lim_{t\to t^-_B}| u(t)| = \infty,
\end{equation}
or
\begin{equation}\label{def2}
0< \limsup_{t\to t^-_B} u(t) - \liminf_{t\to t^-_B} u(t).
\end{equation}
Here, we define $\infty-\infty =0$. We say that a solution blows up at a finite time if it satisfies \eqref{def1}, and is oscillating at a finite time if it satisfies \eqref{def2}. Note that $u(t)$ satisfying
\eqref{def1} may be oscillating in the standard sense, i.e., $u(t) \to \infty$ but $u'(t) \ngtr 0$ (or $u(t) \to - \infty$ but $u'(t) \nless 0$).

\sk For the REP system (\ref{main}), we can prove that there exist no finite-time oscillating solutions. More precisely, the following proposition holds.
\begin{prop}\label{thm:nonosc}
Suppose that the maximum interval of existence for \eqref{main} is $[0,t_B)$ for some $0<t_B<\infty$. Then, it holds for some $i$ that
\begin{equation}\label{abs(lambda)infty}
\lim_{t\to t^-_B} |\lambda_i(t)| = \infty.
\end{equation}
\end{prop}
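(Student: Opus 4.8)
The plan is to prove something sharper than \eqref{abs(lambda)infty}: that \emph{every} $\lambda_i$ has a (possibly infinite) limit as $t\to t_B^-$, and that not all of these limits can be finite. The proposition is then immediate. The one ingredient that makes this work — despite the fact, stressed in the introduction, that the singularity type is not known a priori — is the single piece of a priori sign information available: $\rho(t)>0$ on $[0,t_B)$, because $\rho(t)=\rho_0\exp\!\big(-\int_0^t\lambda\,ds\big)$.

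\emph{Step 1 (each $\lambda_i$ converges in $[-\infty,+\infty]$).} Feeding $\rho>0$ into \eqref{lambda} gives the one-sided inequality $\lambda_i'\ge-\lambda_i^2-\omega^2$ with $\omega^2=kc_b/n$; note that no matching upper bound on $\lambda_i'$ of this form is available, since $\tfrac kn(\rho-c_b)$ is unbounded above. I would then substitute $\theta_i:=\arctan(\lambda_i/\omega)\in(-\tfrac{\pi}{2},\tfrac{\pi}{2})$. A one-line computation gives $\theta_i'=\omega\lambda_i'/(\omega^2+\lambda_i^2)\ge-\omega$, so $t\mapsto\theta_i(t)+\omega t$ is nondecreasing on $[0,t_B)$. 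Being nondecreasing and bounded (as $|\theta_i|<\tfrac{\pi}{2}$ and $t_B<\infty$), it converges as $t\to t_B^-$; hence $\theta_i(t)\to\theta_i^\ast\in[-\tfrac{\pi}{2},\tfrac{\pi}{2}]$, and therefore $\lambda_i(t)=\omega\tan\theta_i(t)$ converges in $[-\infty,+\infty]$. In particular no $\lambda_i$ satisfies \eqref{def2}.

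\emph{Step 2 (not all the limits are finite).} If every $\lim_{t\to t_B^-}\lambda_i(t)$ were finite, then there would be $\tau<t_B$ and $B>0$ with $|\lambda_i(t)|\le B$ for all $i$ on $[\tau,t_B)$; hence $|\lambda|\le nB$ there and, via $\rho(t)=\rho(\tau)\exp\!\big(-\int_\tau^t\lambda\,ds\big)$, the density $\rho$ would be bounded above and bounded below away from $0$ on $[\tau,t_B)$. Thus $(\lambda_1,\dots,\lambda_n,\rho)$ would remain in a bounded subset of $\mathbb R^n\times(0,\infty)$; since the right-hand side of \eqref{main} is smooth (polynomial) there, the standard continuation theorem would extend the solution past $t_B$, contradicting the maximality of $[0,t_B)$. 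Hence at least one $\lambda_i$ tends to $\pm\infty$, which is \eqref{abs(lambda)infty}.

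The one genuinely delicate point — and what I would regard as the crux of the argument — is the realization that the merely \emph{one-sided} bound $\lambda_i'\ge-\lambda_i^2-\omega^2$ already forces convergence, via the $\arctan$ change of variables, which turns the Riccati term into the monotonicity statement $(\theta_i+\omega t)'\ge0$; everything else is routine ODE continuation. As a byproduct, the same monotonicity gives $\theta_1(t)+\omega t\ge\arctan(\lambda_{1,0}/\omega)$ for all $t\in[0,t_B)$, and once one knows (by a short argument from order preservation together with the fact that $\tfrac kn(\rho-c_b)\to-\omega^2$ whenever $\rho\to0$) that it is $\lambda_1$ that diverges, necessarily to $-\infty$, letting $t\to t_B^-$ yields $-\tfrac{\pi}{2}\ge\arctan(\lambda_{1,0}/\omega)-\omega t_B$, i.e. \eqref{estimation_tB}.
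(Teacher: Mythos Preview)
Your proof is correct and rests on the same key idea as the paper's: the positivity of $\rho$ yields the one-sided Riccati bound $\lambda_i'\ge-\lambda_i^2-\omega^2$, and the $\arctan$ substitution converts this into a monotonicity/integral inequality that rules out oscillation of type \eqref{def2}. Your packaging --- showing directly that $\theta_i+\omega t$ is nondecreasing and hence that each $\lambda_i$ has a limit in $[-\infty,+\infty]$, then invoking the continuation theorem --- is a bit cleaner than the paper's contradiction via sequences of decreasing intervals, and you make the continuation step explicit where the paper leaves it folded into the general dichotomy \eqref{def1}/\eqref{def2}, but the argument is essentially the same.
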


\begin{proof}

If $\lambda_i$ is assumed to be an oscillating solution of type \eqref{def2}, then there exists a sequence of disjoint intervals $(a_m, b_m) \subset (0,t_B)$ on which $\lambda_i$ is decreasing and
\begin{eqnarray}
&\dis\lim_{m\to \infty} (b_m - a_m)=0, \label{new3.2}\\
&\dis\lim_{m\to \infty} ( \lambda_i(b_m) - \lambda_i(a_m) )  <0, \label{new3.3}\\
&\dis\lim_{m\to \infty} ( |\lambda_i(b_m)| + |\lambda_i(a_m)| )  < \infty. \label{new3.1}
\end{eqnarray}
Note that if \eqref{new3.1} fails, then one can conclude that $\lim_{t \to t_B} |\lambda_i(t)|= \infty$, although $\lambda_i$ is oscillating in the standard sense.

\sk From \eqref{rho} 
\begin{equation*}
\rho = \rho_0 e^{-\int_0^t \lambda(s) ds} >0.
\end{equation*}
It follows from \eqref{lambda} that 
\begin{equation*}
\lambda'_i \geq - \lambda_i^2 - \omega^2, \quad \omega:= \sqrt{\frac{kc_b}{n}}.
\end{equation*}
That is,
\begin{equation}\label{add1}
\frac{\lambda_i'}{\lambda_i^2 + \omega^2} \geq  - 1.
\end{equation}
Upon integration over $(a_m,b_m)$, this implies that
\begin{equation*}
\frac1 \omega \arctan \Big(\frac{\lambda_i(b_m)}{\omega}\Big) - \frac1\omega \arctan \Big(\frac{\lambda_i(a_m)}{ \omega}\Big) \geq  - (b_m-a_m).
\end{equation*}
Owing to the conditions \eqref{new3.3} and \eqref{new3.1}, the left-hand side is strictly less than $0$, while the right-hand side converges to $0$. Thus, there exist no oscillating solutions of type \eqref{def2} for $\lambda_i$.
\end{proof}

\sk The order-preserving property of $\lambda_i$ is well known (see \cite{LiuTad02, LeeLiu13}). Indeed, it follows from \eqref{lambda} that
\begin{equation}\label{differenc_lambda}
(\lambda_i -\lambda_j)' = - (\lambda_i + \lambda_j)(\lambda_i - \lambda_j),
\end{equation}
and this yields the following lemma.
\begin{lemma}\label{lem:oreder}
For any $t>0$, the solutions $\lambda_i$ of \eqref{main} satisfy 
\begin{equation*}
\lambda_1(t) =  \cdots = \lambda_J(t) < \lambda_{J+1}(t) \leq \cdots \leq \lambda_n(t).  
\end{equation*} 
\end{lemma}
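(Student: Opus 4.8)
The plan is to exploit that \eqref{differenc_lambda} is, for each fixed pair $(i,j)$, a \emph{linear} homogeneous first-order ODE in the scalar quantity $w_{ij}(t):=\lambda_i(t)-\lambda_j(t)$, with time-dependent coefficient $-(\lambda_i(t)+\lambda_j(t))$. Since $\lambda_i$ and $\lambda_j$ are continuous on the interval of existence $[0,t_B)$, this coefficient is locally integrable, so the equation integrates explicitly to
\[
w_{ij}(t) = w_{ij}(0)\, \exp\!\left(-\int_0^t \big(\lambda_i(s)+\lambda_j(s)\big)\,ds\right).
\]
The exponential factor is strictly positive and finite for every $t\in[0,t_B)$, so $w_{ij}(t)$ carries the same sign as $w_{ij}(0)$; in particular $w_{ij}(t)=0$ whenever $w_{ij}(0)=0$, $w_{ij}(t)<0$ whenever $w_{ij}(0)<0$, and $w_{ij}(t)\le 0$ whenever $w_{ij}(0)\le 0$.

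Next I would feed in the initial ordering \eqref{J} pair by pair. From $\lambda_{1,0}=\cdots=\lambda_{J,0}$ we get $w_{1j}(0)=0$ for $2\le j\le J$, hence $\lambda_1(t)=\cdots=\lambda_J(t)$ for all $t$. From the strict gap $\lambda_{J,0}<\lambda_{J+1,0}$ we get $w_{J,J+1}(0)<0$, hence $\lambda_J(t)<\lambda_{J+1}(t)$ for all $t$. From $\lambda_{J+1,0}\le\cdots\le\lambda_{n,0}$ we get $w_{i,i+1}(0)\le 0$ for $J+1\le i\le n-1$, hence $\lambda_i(t)\le\lambda_{i+1}(t)$. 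Chaining these relations yields exactly the asserted chain $\lambda_1(t)=\cdots=\lambda_J(t)<\lambda_{J+1}(t)\le\cdots\le\lambda_n(t)$.

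There is essentially no serious obstacle here: the only points deserving a word of care are (i) that $\int_0^t(\lambda_i+\lambda_j)\,ds$ is finite for each $t<t_B$, which is immediate from continuity of the $\lambda_i$, and (ii) that \emph{strict} inequalities are preserved — this is precisely where it matters that the exponential weight never vanishes and never blows up on $[0,t)$. As an alternative to the explicit formula, the equality statement $\lambda_1\equiv\cdots\equiv\lambda_J$ also follows directly from uniqueness of solutions to the linear ODE $w'=-(\lambda_i+\lambda_j)w$ with $w(0)=0$.
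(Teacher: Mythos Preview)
Your proposal is correct and follows essentially the same approach as the paper: the paper simply records the identity \eqref{differenc_lambda}, $(\lambda_i-\lambda_j)'=-(\lambda_i+\lambda_j)(\lambda_i-\lambda_j)$, and states that this yields the lemma, which is precisely the linear homogeneous ODE you integrate explicitly to conclude sign (and zero) preservation of $\lambda_i-\lambda_j$.
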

\sk Proposition \ref{thm:nonosc} states that $\dis \lim_{t\to t_B^-} |\lambda_i(t)| =\infty$ for some $i$. Then, 
one can conclude that 
\begin{equation*}
\lim_{t\to t_B^-} \lambda_1 (t) = -\infty,
\end{equation*}
owing to Lemma \ref{lem:oreder} concerning order preservation. In fact, if we assume that there exists no
$\lambda_i$ diverging to $-\infty$, then $\lambda_j$ tends to $+ \infty$ for some $j$ as $t \to t_B^-$, so does $\lambda  = \sum_{i=1}^n \lambda_i$. Thus, 
\begin{equation*}
\dis\min_{0\leq t\leq t_B} \lambda(t) > -\infty, 
\end{equation*}
and 
\begin{align*}
\lambda_j'(t) &=  - \lambda_j^2 (t) + \frac{k \rho_0}{n} e ^ {-\int_0^t \lambda(s) ds }  - \omega^2\\
           &\leq  \frac{k \rho_0}{n} e^{- t_B \dis\min_{0\leq t \leq t_B} \lambda(t)} -\omega^2 < \infty,
\end{align*}
which contradicts the fact that $\lambda_j \to \infty$ as $t \to t_B^-$. Thus, it must hold that $\lim_{t\to t_B^-} \lambda_i(t) = -\infty$ for some $i$. Owing to the order preservation in Lemma \ref{lem:oreder}, the following proposition holds.
\begin{prop}\label{thm:lambda_J}
Suppose that the maximum interval of existence for \eqref{main} is $[0,t_B)$ for some $0<t_B<\infty$. Then, there exist $1\leq J_1 \leq J_2 \leq n$ such that
      \begin{eqnarray}
      ~\dis \lim_{t\to t_B^-}\lambda_i (t) = - \infty, ~ 1\leq i\leq J_1,\label{lambda1}\\
      \lim_{t\to t_B^-} \lambda_i(t) = \infty, \quad J_2< i \leq n. \label{j'}
      \end{eqnarray}
\end{prop}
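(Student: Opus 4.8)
The plan is to upgrade the two structural facts already available --- the absence of finite-time oscillating solutions (Proposition~\ref{thm:nonosc}, whose proof in fact shows that \emph{no} component $\lambda_i$ is oscillating of type~\eqref{def2}) and the order preservation of Lemma~\ref{lem:oreder} --- into the asserted block structure. First I would use the Riccati comparison \eqref{add1} to pin down the behavior of each component: integrating \eqref{add1} over $(s,t)$ gives $\arctan(\lambda_i(t)/\omega)\ge\arctan(\lambda_i(s)/\omega)-\omega(t-s)$, so if $\limsup_{t\to t_B^-}\lambda_i=+\infty$, then taking $s$ close to $t_B$ with $\lambda_i(s)$ large makes the right-hand side exceed $\pi/2-\ep$ for all later $t$, forcing $\lambda_i\to+\infty$; otherwise $\lambda_i$ is bounded above, and then --- not being of type~\eqref{def2} --- it either converges to $-\infty$ or is bounded. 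By Lemma~\ref{lem:oreder} the index set $\{i:\lambda_i\to+\infty\}$ is upward closed and $\{i:\lambda_i\to-\infty\}$ is downward closed (if $\lambda_i\to+\infty$ and $j\ge i$ then $\lambda_j(t)\ge\lambda_i(t)\to+\infty$; if $\lambda_i\to-\infty$ and $j\le i$ then $\lambda_j(t)\le\lambda_i(t)\to-\infty$), so they are a terminal segment $\{J_2+1,\dots,n\}$ and an initial segment $\{1,\dots,J_1\}$, and they are disjoint, whence $J_1\le J_2$.

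It then remains only to show the initial segment is nonempty, i.e. that some $\lambda_i\to-\infty$; this is exactly the contradiction sketched in the paragraph preceding the statement. Assume no $\lambda_i$ diverges to $-\infty$. Proposition~\ref{thm:nonosc} provides $i_0$ with $|\lambda_{i_0}(t)|\to\infty$, and since $\lambda_{i_0}$ is eventually of constant sign, $\lambda_{i_0}\to+\infty$. By the dichotomy just established, each $\lambda_i$ is then either convergent to $+\infty$ or bounded, hence bounded below; so $\lambda=\sum_{i=1}^n\lambda_i\to+\infty$ and, being continuous on $[0,t_B)$, is bounded below there. Consequently $\rho=\rho_0 e^{-\int_0^t\lambda(s)\,ds}$ is bounded on $[0,t_B)$, so \eqref{lambda} yields $\lambda_{i_0}'(t)\le\frac{k\rho_0}{n}e^{-\int_0^t\lambda(s)\,ds}-\omega^2\le C$ for a constant $C$, and therefore $\lambda_{i_0}$ is bounded above on the finite interval $[0,t_B)$ --- contradicting $\lambda_{i_0}\to+\infty$. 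Hence some $\lambda_i\to-\infty$, and the segment description above gives \eqref{lambda1} and \eqref{j'}.

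The main point requiring care is the claim that every $\lambda_i$ is bounded below once no component diverges to $-\infty$: a priori such a component could still satisfy $\liminf_{t\to t_B^-}\lambda_i=-\infty$ while remaining bounded above, and excluding this is precisely where the non-oscillation conclusion (no $\lambda_i$ of type~\eqref{def2}) is indispensable. Everything else is the elementary Riccati comparison \eqref{add1} and order bookkeeping through Lemma~\ref{lem:oreder}.
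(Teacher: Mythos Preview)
Your proposal is correct and follows essentially the same route as the paper: the paragraph immediately preceding Proposition~\ref{thm:lambda_J} carries out exactly your contradiction argument (no $\lambda_i\to-\infty$ forces $\lambda$ bounded below, hence $\rho$ bounded, hence $\lambda_j'$ bounded, contradicting $\lambda_j\to+\infty$), and then invokes order preservation. Your version is a touch more explicit in first establishing the per-component trichotomy via the Riccati comparison~\eqref{add1}---in particular your observation that $\limsup\lambda_i=+\infty$ already forces $\lambda_i\to+\infty$---which cleanly justifies the step ``so does $\lambda$'' that the paper states somewhat tersely; but the ingredients and logic are the same.
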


\sk We remark that there exists no $\lambda_{i}$ satisfying \eqref{j'} in the case that $J_2=n$. However, \eqref{lambda1} indicates that 
\begin{equation*}
\lim_{t\to t^-_B} \lambda_1 = -\infty.
\end{equation*}

\sk The estimation \eqref{estimation_tB} of $t_B$ also follows immediately. 
Integrating \eqref{add1} for $i=1$ over $(0,t)$ 
yields that  
\begin{equation*}
\arctan\Big(\frac{\lambda_1(t)} {\omega}\Big) > \arctan\Big(\frac{\lambda_{1,0}}{\omega}\Big) - \omega t.
\end{equation*}
Sending $t$ to $t_B$ implies that
\begin{align*}
 -\frac\pi 2  \geq \arctan \Big(\frac{\lambda_{1,0}}{\omega}\Big) - \omega t_B.
\end{align*}
Thus, we obtain \eqref{estimation_tB} in Theorem \ref{mainthm}.
\begin{thm}
Suppose that the maximum interval of existence for \eqref{main} is $[0,t_B)$ for some $0<t_B<\infty$. Then,
\begin{equation*}
   t_B \geq \frac{1}{\omega} \arctan \left(\frac{\lambda_{1,0}}{\omega}\right) + \frac{\pi}{2\omega}.
   \end{equation*}
\end{thm}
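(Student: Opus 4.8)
The plan is to turn the Riccati equation \eqref{lambda} for the index $i=1$ into a one‑sided differential inequality with constant coefficients, integrate it explicitly, and then pass to the limit $t\to t_B^-$ using the structural facts already established above.

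First I would use that $\rho=\rho_0 e^{-\int_0^t\lambda\,ds}>0$ on $[0,t_B)$, so that \eqref{lambda} gives
\[
\lambda_1' = -\lambda_1^2 + \frac{k}{n}(\rho-c_b) \;\geq\; -\lambda_1^2 - \frac{kc_b}{n} \;=\; -\lambda_1^2-\omega^2 ,
\]
which is exactly \eqref{add1} for $i=1$. Dividing by $\lambda_1^2+\omega^2>0$ and integrating over $(0,t)$ yields
\[
\frac{1}{\omega}\arctan\!\Big(\frac{\lambda_1(t)}{\omega}\Big) - \frac{1}{\omega}\arctan\!\Big(\frac{\lambda_{1,0}}{\omega}\Big) \;\geq\; -t .
\]

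Next I would invoke Proposition \ref{thm:nonosc}, which rules out finite‑time oscillating solutions, together with the order‑preserving property of Lemma \ref{lem:oreder} and the short argument following Proposition \ref{thm:nonosc} that excludes the possibility of no eigenvalue tending to $-\infty$; this forces $\lim_{t\to t_B^-}\lambda_1(t)=-\infty$, hence $\arctan(\lambda_1(t)/\omega)\to -\pi/2$. Letting $t\uparrow t_B$ in the integrated inequality then gives $-\tfrac{\pi}{2\omega}-\tfrac{1}{\omega}\arctan(\lambda_{1,0}/\omega)\geq -t_B$, which rearranges to the asserted lower bound for $t_B$.

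The only substantive ingredient is the structural fact $\lambda_1(t)\to-\infty$ as $t\to t_B^-$, and that is precisely where Proposition \ref{thm:nonosc} (non‑oscillation) and order preservation are used; once those are in hand, the remaining work — monotonicity of $\arctan$ and evaluation of its primitive — is routine. So I do not expect a genuine obstacle here: the difficulty has already been absorbed into the earlier results.
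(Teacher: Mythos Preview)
Your proposal is correct and follows essentially the same approach as the paper: use $\rho>0$ to obtain the differential inequality \eqref{add1} for $i=1$, integrate to get an arctan inequality, and pass to the limit using $\lambda_1(t)\to -\infty$ (which the paper records as Proposition \ref{thm:lambda_J}, derived exactly from the non-oscillation and order-preservation ingredients you cite). There is no substantive difference.
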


\sk
Next, we turn our attention to density $\rho$. Here, we show that $\rho \notin L^1(0,t_B)$ through a contradiction argument. Assuming that $\rho \in L^1(0,t_B)$, we find that $J_2 = n$, because integrating \eqref{lambda} gives that for $ i = 1, 2, \cdots, n$
\begin{equation*}
\lambda_i(t) - \lambda_{i,0} = - \int_0^t \lambda_i^2(s) ds + \frac{k}{n} \int_0^t (\rho(s) -c_b) dx < \infty.
\end{equation*}
It follows that $J_1 = n$ or $\lambda_i$ is finite for $J_1 < i \leq n$. Thus, there exists $f(t)$ such that  
\begin{align}\label{temp1}
\rho(t) = \rho_0 e^{-\int_0^t \lambda(s) ds}  = f(t) e^{- \sum_{i=1}^{J_1} \int_0^t \lambda_i(s) ds}, \quad 0< f(t) < \infty.
\end{align}
Now, let $1 \leq i \leq J_1$. Then, there exists $t_1 \in (0,t_B)$ such that
\begin{align*}
\lambda_i (t) < 0, \quad t_1 < t < t_B,  
\end{align*}
and $\tilde\lambda_i := \lambda_i - \omega$ satisfies 
\begin{equation*}
\tilde\lambda_i ' = \lambda_i'  > -\lambda_i^2  - \omega^2  > - \tilde \lambda_i^2.
\end{equation*}
We then deduce that 
\begin{equation*}
\tilde\lambda_i < - \frac{1}{t_B-t}, \quad t_1 < t < t_B.
\end{equation*}
Thus, for some constant $K >0$ it holds that
\begin{equation*}
- \sum_{i=1}^{J_1} \int_0^t \lambda_i(s) ds >  \ln (K (t_B -t)^{-J_1}) , \quad t_1 < t < t_B.
\end{equation*}
Substituting the inequality into \eqref{temp1} yields 
\begin{equation*}
\rho(t) = f(t) e^{- \sum_{i=1}^{J_1} \int_0^t \lambda_i(s) ds} > \frac{K f(t) }{(t_B -t)^{J_1}}, \quad t_1 < t < t_B. 
\end{equation*}
In Proposition \ref{thm:lambda_J}, we have shown $J_1\geq 1$, which contracts to the assumption that $\rho \in L^1(0,t_B)$. We summarize this result as follows.
\begin{prop}\label{lem:rho}
Suppose that the maximum interval of existence for \eqref{main} is $[0,t_B)$ for some $0<t_B<\infty$. Then,
\begin{equation}\label{rhoinfty}
\lim_{t\to t^-_B} \int_0^t \rho(s)ds  = \infty.
\end{equation}
\end{prop}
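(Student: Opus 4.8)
The plan is to argue by contradiction: I would assume $\rho\in L^1(0,t_B)$ and extract from this a lower bound on $\rho$ near $t_B$ strong enough to violate integrability. The first step is to observe that integrating \eqref{lambda} over $(0,t)$ gives $\lambda_i(t)=\lambda_{i,0}-\int_0^t\lambda_i^2(s)\,ds+\tfrac{k}{n}\int_0^t(\rho(s)-c_b)\,ds$; since $\rho>0$ and $\rho\in L^1(0,t_B)$ the last term is bounded above, while $-\int_0^t\lambda_i^2\le 0$, so every $\lambda_i$ is bounded above on $[0,t_B)$. By Proposition \ref{thm:lambda_J} this forces $J_2=n$ (no component can run off to $+\infty$), and together with Proposition \ref{thm:nonosc}, which rules out oscillation of type \eqref{def2}, it forces each $\lambda_i$ with $J_1<i\le n$ to converge to a finite limit (such a $\lambda_i$ is bounded above, does not tend to $-\infty$, and cannot oscillate, hence has $\liminf=\limsup$ finite).

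Next I would rewrite the density using $\rho=\rho_0 e^{-\int_0^t\lambda(s)\,ds}$ with $\lambda=\sum_{i=1}^n\lambda_i$, splitting the exponent as $\rho(t)=f(t)\,e^{-\sum_{i=1}^{J_1}\int_0^t\lambda_i(s)\,ds}$, where $f(t)=\rho_0 e^{-\sum_{i=J_1+1}^n\int_0^t\lambda_i(s)\,ds}$. By the previous step each $\int_0^t\lambda_i$ with $i>J_1$ has a finite limit as $t\to t_B^-$, so $f$ stays between two positive constants near $t_B$ (if $J_1=n$ this is vacuous, with $f\equiv\rho_0$).

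The crux is bounding the remaining exponent from below for the indices $i\le J_1$, where $\lambda_i\to-\infty$ by Proposition \ref{thm:lambda_J}. Fix such an $i$, choose $t_1\in(0,t_B)$ with $\lambda_i<0$ on $(t_1,t_B)$, and set $\tilde\lambda_i:=\lambda_i-\omega$ with $\omega=\sqrt{kc_b/n}$. Using \eqref{add1} one checks that on $(t_1,t_B)$ one has $\tilde\lambda_i'=\lambda_i'\ge-\lambda_i^2-\omega^2>-(\lambda_i-\omega)^2=-\tilde\lambda_i^2$, the last inequality being where $\lambda_i<0$ is used. Hence $(1/\tilde\lambda_i)'=-\tilde\lambda_i'/\tilde\lambda_i^2<1$; integrating from $s$ to $t$ and letting $t\to t_B^-$, where $1/\tilde\lambda_i(t)\to 0^-$, yields $\tilde\lambda_i(s)\le-1/(t_B-s)$, i.e. $\lambda_i(s)\le-1/(t_B-s)+\omega$. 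Integrating this estimate shows $-\int_0^t\lambda_i(s)\,ds\ge\ln\!\big(K_i(t_B-t)^{-1}\big)$ for some $K_i>0$ and all $t$ sufficiently close to $t_B$. Summing over $1\le i\le J_1$ and plugging back into the factorization gives $\rho(t)\ge K\,f(t)\,(t_B-t)^{-J_1}$; since $J_1\ge 1$ and $f$ is bounded below by a positive constant, $\rho\ge c\,(t_B-t)^{-1}$ near $t_B$, which is not integrable, contradicting the assumption.

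I expect the main obstacle to be this last comparison argument: one has to see that shifting by exactly $\omega$ converts the Riccati differential inequality with the extra $\omega^2$ term into a clean inequality $\tilde\lambda_i'>-\tilde\lambda_i^2$ on the region $\{\lambda_i<0\}$, and that the resulting ODE comparison delivers the sharp rate $\lambda_i\sim-(t_B-t)^{-1}$, so that the exponent contributes a full power of $(t_B-t)^{-1}$ per index. The remaining ingredients — boundedness of $\lambda_i$ for $i>J_1$, the multiplicative splitting of $\rho$, and the final non-integrability conclusion — are routine once Propositions \ref{thm:nonosc} and \ref{thm:lambda_J} are in hand.
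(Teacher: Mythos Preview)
Your argument is correct and follows essentially the same route as the paper's own proof: assume $\rho\in L^1(0,t_B)$, use the integrated form of \eqref{lambda} to bound every $\lambda_i$ from above (hence $J_2=n$), split $\rho(t)=f(t)\,e^{-\sum_{i\le J_1}\int_0^t\lambda_i}$, and then apply the shifted Riccati comparison $\tilde\lambda_i=\lambda_i-\omega$ to extract $\tilde\lambda_i\le -1/(t_B-t)$ and force $\rho\gtrsim (t_B-t)^{-J_1}$. You are in fact a bit more careful than the paper at two points: you explicitly invoke the non-oscillation result to conclude that each $\lambda_i$ with $i>J_1$ converges (the paper just says ``$\lambda_i$ is finite''), and you spell out the integration of $(1/\tilde\lambda_i)'<1$ that yields the rate bound, which the paper states without justification.
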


\section{Transformed equations}

\sk Although Proposition \ref{thm:lambda_J} and Proposition \ref{lem:rho} state that for some $i$, $\lambda_i$ and $\int_0^t \rho(s) ds$ diverge as $t\to t_B^-$, respectively, they do not illuminate the behaviors of $\lambda'_i$ and $\rho$ near $t_B$, which are essential for analyzing solution singularities. To go further, we transform the Riccati-type equation \eqref{lambda} to a second-order linear differential equation by defining 
\begin{equation}\label{def_ui}
\dis  u_i(t)  = e^{ \int_0^t \lambda_i(s) ds}.
\end{equation}
This gives 
\begin{align}
&\lambda_i = \frac{u'_i}{u_i},\nonumber\\
& u_i(0)=1, \quad u'_i(0)=\lambda_{i,0}, \label{u_init}
\end{align}
and
\begin{align}\label{rho_u}
\rho(t)  = \rho_0 e^{- \sum_{i=1}^n \int_0^t \lambda_i(s) ds} = \rho_0  \prod_{i=1}^n \frac{1}{u_i(t)}.
\end{align}
Equation \eqref{lambda} is also transformed to
\begin{equation}\label{eqn u1}
u_i'' - \frac{k}{n}(\rho-c_b) u_i =0,
\end{equation}
or (recall that $\omega^2 = kc_b/n$) 
\begin{equation}\label{eqn_u}
u''_i + \omega^2 u_i = \frac{k}{n}  \rho_0  \prod_{\substack {{m=1} \\ {m \neq i}}}^n \frac{1}{u_m} =: g_i.
\end{equation}
The general solution of \eqref{eqn_u} is thus given by
\begin{equation}\label{general_u1}
u_i(t) = c_1 \sin \omega t + c_2 \cos \omega t + \frac{1}{\omega} \int_0^t g_i(s) \sin \omega (t-s) ds.
\end{equation}
\sk We proceed to observe the behavior of $u_i$ near $t_B$. Let $1 \leq i \leq J_1 $, i.e., 
\begin{equation}\label{3.12}
 \lambda_i = \frac{u_i'}{ u_i} \to -\infty \quad \text{as~} t \to t^-_B, 
\end{equation} 
then the positivity of $u_i$ implies that $u'_i <0$, and thus $u_i$ converges. Let $- \alpha_i \leq 0$ be the least upper bound of $u'_i$. Then, for any $\ep >0$ there exists $ t_0 \in (0, t_B)$ such that 
\begin{equation*}
-\alpha_i -\ep < u'_i(t_0) \leq -\alpha_i.
\end{equation*} 
On the other hand, it follows from \eqref{eqn u1} that for any $t_0<t<t_B$,
\begin{equation}\label{pf3.1}
u_i'(t) - u_i'(t_0) = \int_{t_0}^t \frac{k}{n} (\rho(s) -c_b) u_i(s) ds. 
\end{equation}
Owing to Proposition \ref{lem:rho} and the convergence of $u_i$, there exists $t_1 \in (t_0, t_B)$ such that 
\begin{equation*}
u'_i(t) - u'_i(t_0) = \int_{t_0}^t \frac{k}{n} (\rho(s) -c_b) u_i(s) ds > 0, \quad t_1 \leq t < t_B
\end{equation*}
and
\begin{equation*}
-\alpha_i -\ep < u'_i(t_0) < u_i'(t)  \leq -\alpha_i, \quad t_0 \leq t_1 \leq t < t_B.
\end{equation*}
This implies that 
\begin{equation}\label{alpha_i}
\lim_{t\to t_B^-} u_i'(t) = -\alpha_i,
\end{equation}
and thus $u_i$ converges to $0$ as $t$ approaches $t_B$, satisfying \eqref{3.12}. We may extend the interval of existence and obtain the boundary conditions:
\begin{equation} \label{u_bdy}
u_i(t_B)=0, \quad u'_i(t_B)= -\alpha_i.
\end{equation}
It follows from (\ref{general_u1}) that
\begin{equation}\label{general_u2}
u_i(t) = \frac{1}{\omega}\left(\int^t_{t_B} g_i(s) \sin \omega (t-s) ds - \alpha_i \sin \omega(t-t_B)\right).
\end{equation}
In \eqref{pf3.1}, we observe that  
\begin{equation}\label{g_1_L1}
 g_i =\frac{k}n \rho_0 \prod_{\substack {{m=1} \\ {m \neq i}}}^n \frac{1}{u_m}  = \dis \frac{k}{n} \rho u_i \in L^1(0, t_B).
\end{equation} 

\sk Next we consider the case that $J_2 < j \leq n$;  that is, 
\begin{equation}\label{3.11}
\lambda_j = \frac{u_j'}{u_j} \to \infty.
\end{equation}
Because $u_j >0$, $u_j'$ must be positive in a neighborhood of $t_B$, which implies that $u_j$ is increasing near $t_B$. Thus,
\begin{equation}\label{beta_i}
\text{either~}  \lim_{t\to t_B^-} u_j(t) = \infty \quad \text{or~}  \lim_{t\to t_B^-} u_j(t) = \beta_j 
\end{equation}
for some $\beta_j >0$.  In either case, $u_j'$ must diverge to $\infty$, owing to \eqref{3.11}.

\sk Thanks to the behavior of $u_i$ near $t_B$, we obtain the following lemma. 
\begin{lemma}\label{lem_3}
Suppose that the maximum interval of existence for \eqref{main} is $[0,t_B)$ for some $0<t_B<\infty$. Then, for $1\leq J_1\leq J_2 \leq n$ defined in Proposition \ref{thm:lambda_J}
\begin{align*}
\lambda_{i,0} &= \lambda_{j,0}, \quad 1\leq i,j \leq J_1,\\
\lambda_i(t) &= \lambda_j(t), \quad 1\leq i,j \leq J_1,
\end{align*}
and   
\begin{align*}
\lim_{t\to t^-_B} \frac{\lambda_i(t)}{\lambda_j(t)} & =1, \quad J_2 < i,j \leq n,\nonumber\\
\lim_{t\to t_B^-}\frac{u_j(t)}{u_n(t)} & = \frac{\lambda_{j,0}-\lambda_{1,0}}{\lambda_{n,0}-\lambda_{1,0}}, \quad J_2 < j< n.
\end{align*}
\end{lemma}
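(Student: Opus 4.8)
The key observation I would use is structural: by \eqref{eqn u1} every one of $u_1,\dots,u_n$ solves the \emph{same} linear second-order ODE $u''-\tfrac{k}{n}(\rho-c_b)u=0$ on $(0,t_B)$, whose coefficient $\tfrac{k}{n}(\rho(t)-c_b)$ is continuous there. Hence the space of solutions is two-dimensional, any three of the $u_i$ are linearly dependent, and for every pair $i,j$ the Wronskian $W_{ij}(t):=u_i(t)u_j'(t)-u_i'(t)u_j(t)$ is constant on $[0,t_B)$; by \eqref{u_init}, $W_{ij}\equiv u_i(0)u_j'(0)-u_i'(0)u_j(0)=\lambda_{j,0}-\lambda_{i,0}$. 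This single fact drives all four assertions.

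For the two statements with $1\le i,j\le J_1$: by \eqref{alpha_i} and the discussion leading to \eqref{u_bdy} we have $u_i(t),u_j(t)\to 0$ while $u_i'(t)\to-\alpha_i$ and $u_j'(t)\to-\alpha_j$ are finite as $t\to t_B^-$, so $W_{ij}(t)\to 0$. Since $W_{ij}$ is identically equal to the constant $\lambda_{j,0}-\lambda_{i,0}$, this forces $\lambda_{i,0}=\lambda_{j,0}$. Then $u_i$ and $u_j$ solve the same ODE with the same data \eqref{u_init}, whence $u_i\equiv u_j$ and $\lambda_i=u_i'/u_i\equiv u_j'/u_j=\lambda_j$ on $[0,t_B)$ (alternatively one may quote \eqref{differenc_lambda}).

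For the two statements with $J_2<i,j\le n$ I may assume this index range is nonempty, so that $\lambda_n\to+\infty$ (by order preservation, Lemma \ref{lem:oreder}) while $\lambda_1\to-\infty$; hence $\lambda_n\not\equiv\lambda_1$, so $\lambda_{n,0}\ne\lambda_{1,0}$, i.e.\ $\lambda_{n,0}>\lambda_{1,0}$ by \eqref{J}, and therefore $W_{1n}=\lambda_{n,0}-\lambda_{1,0}\ne 0$, making $\{u_1,u_n\}$ a basis of the solution space. Writing $u_j=a_ju_1+b_ju_n$ and matching data via \eqref{u_init} gives $a_j+b_j=1$, $\lambda_{j,0}=a_j\lambda_{1,0}+b_j\lambda_{n,0}$, hence $b_j=(\lambda_{j,0}-\lambda_{1,0})/(\lambda_{n,0}-\lambda_{1,0})$; moreover $b_j>0$ because $\lambda_{j,0}=\lambda_{1,0}$ would give $u_j\equiv u_1$ and $\lambda_j\equiv\lambda_1\to-\infty$, contradicting $\lambda_j\to+\infty$. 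By \eqref{beta_i} (and the sentence after it) $u_n$ is increasing on a left-neighborhood of $t_B$ with limit $\beta_n>0$ or $+\infty$, so $u_n$ is bounded below by a positive constant there and $u_n'\to\infty$, while $u_1\to 0$ and $u_1'\to-\alpha_1$ is finite; consequently $u_1/u_n\to 0$ and $u_1'/u_n'\to 0$. From $u_j/u_n=a_j(u_1/u_n)+b_j$ we get $u_j/u_n\to b_j$, which is the claimed limit. Differentiating, $u_j'/u_n'=a_j(u_1'/u_n')+b_j\to b_j$ as well, and then, for $J_2<i,j\le n$,
\[
\frac{\lambda_i}{\lambda_j}=\frac{u_i'u_j}{u_iu_j'}=\frac{u_i'/u_n'}{u_i/u_n}\cdot\frac{u_j/u_n}{u_j'/u_n'}\longrightarrow\frac{b_i}{b_i}\cdot\frac{b_j}{b_j}=1 .
\]

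The one delicate point is the evaluation of the ratios $u_1/u_n$ and $u_1'/u_n'$ near $t_B$, which a priori are of indeterminate type: it is essential that $u_1'$ has a \emph{finite} limit while $u_n$ stays bounded away from $0$ and $u_n'\to\infty$ — all already recorded in \eqref{alpha_i}, \eqref{u_bdy} and \eqref{beta_i}. Once these are in hand, the remainder is the elementary two-dimensional linear-algebra bookkeeping above.
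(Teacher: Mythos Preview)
Your proof is correct and follows essentially the same approach as the paper: both hinge on Abel's identity $u_i'u_j-u_iu_j'=\lambda_{i,0}-\lambda_{j,0}$, the boundary behavior \eqref{alpha_i}, \eqref{u_bdy}, \eqref{beta_i}, and the representation of $u_j$ as a combination of the basis $\{u_1,u_n\}$. The only noteworthy variation is in the argument for $\lim\lambda_i/\lambda_j=1$ with $J_2<i,j\le n$: the paper simply rewrites the Wronskian as $\lambda_i-\lambda_j=(\lambda_{i,0}-\lambda_{j,0})/(u_iu_j)$, observes that the right-hand side stays bounded (since $u_i,u_j$ are bounded away from $0$ near $t_B$), and divides by $\lambda_j\to\infty$; you instead track both ratios $u_j/u_n\to b_j$ and $u_j'/u_n'\to b_j$ and combine them. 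The paper's route is a bit shorter here, but yours is equally valid and has the mild advantage of making the common asymptotic slope $b_j$ explicit for both $u_j$ and $u_j'$.
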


\begin{proof}
We employ Abel's identity for \eqref{eqn u1} together with the initial conditions \eqref{u_init} to obtain  
\begin{equation} \label{abel_Thm}
u'_i(t) u_j(t) - u_i(t) u'_j(t)  = \lambda_{i,0} - \lambda_{j,0}, \quad 0\leq t < t_B. 
\end{equation}
Let $1\leq i,j \leq J_1$. Because $u_i(t_B)=u_j(t_B)=0$ and $u_i'(t_B), u_j'(t_B)$ are bounded, the left-hand side of \eqref{abel_Thm} vanishes at $t=t_B$, and thus $\lambda_{i,0} =\lambda_{j,0}$, as desired.

\sk We rewrite \eqref{abel_Thm} as   
\begin{equation}\label{u1un}
\lambda_i(t) - \lambda_j(t) = \frac{u_i'(t)}{u_i(t)} - \frac{u_j'(t)}{u_j(t)}   = \frac{\lambda_{i,0}-\lambda_{j,0}}{u_i(t) u_j(t)}.
\end{equation}
This yields  
\begin{equation*}
\lim_{t\to t^-_B} \frac{\lambda_i(t)}{\lambda_j(t)} =1, \quad J_2< i,j \leq n,
\end{equation*}
because  $1/{(u_i u_j)}$
converges for $J_2< i,j \leq n$.

\sk From \eqref{abel_Thm} we observe that $u_1$ and $u_n$ are linearly independent solutions of \eqref{eqn u1}. Then, for $J_2 < j < n$ we can represent $u_j$ as a linear combination of $u_1$ and $u_n$. Further  using the initial conditions \eqref{u_init}, we obtain 
\begin{equation}\label{temp61}
u_j = \frac{\lambda_{n,0}-\lambda_{j,0}}{\lambda_{n,0}-\lambda_{1,0}} u_1 + \frac{\lambda_{j,0}-\lambda_{1,0}}{\lambda_{n,0}-\lambda_{1,0}}u_n.
\end{equation}
On the other hand, the behaviors of $u_1$ and $u_j$ near $t_B$ in  \eqref{u_bdy} and \eqref{beta_i} imply that
$$\lim_{t\to t_B^-} u_1(t)/u_j(t) =0,$$ 
and it follows that      
\begin{align*}
\lim_{t\to t_B^-}\frac{u_j(t)}{u_n(t)}  = \frac{\lambda_{j,0}-\lambda_{1,0}}{\lambda_{n,0}-\lambda_{1,0}}, \quad J_2 < j< n.
\end{align*}
\end{proof}

\sk 
Further, we are able to show that $J_1=J_2=J$. That is, there exists no bounded $\lambda_i$.
More precisely, we have 

\begin{thm}\label{prop10} Suppose that the maximum interval of existence for \eqref{main} is $[0,t_B)$ for some $0<t_B<\infty$. 
Then,
\begin{equation*}
1 \leq J < n
\end{equation*}
and
\begin{equation*}
\dis \lim_{t\to t_B^-} \lambda_{i}(t) = 
\begin{cases}
-\infty,  \quad & 1\leq i \leq J,  \\
\infty,   \quad & J < i \leq n. 
 \end{cases}
\end{equation*}  
\end{thm}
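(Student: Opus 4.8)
The plan is to pin down precisely the two indices $1\le J_1\le J_2\le n$ furnished by Proposition~\ref{thm:lambda_J}, to show that both coincide with $J$, and then that $J<n$; the displayed limits in \eqref{lambda_inf}-type form then follow at once, since $\lambda_i\to-\infty$ for $i\le J_1$ and $\lambda_i\to+\infty$ for $i>J_2$. First I would show $J_1=J$. On the one hand, Lemma~\ref{lem_3} gives $\lambda_{1,0}=\cdots=\lambda_{J_1,0}$, and since by the order condition \eqref{J} exactly the first $J$ of the initial values coincide with $\lambda_{1,0}$, this forces $J_1\le J$. On the other hand, Lemma~\ref{lem:oreder} gives $\lambda_1(t)=\cdots=\lambda_J(t)$ for every $t$, so from $\lambda_1(t)\to-\infty$ we get $\lambda_i(t)\to-\infty$ for all $1\le i\le J$, i.e.\ $J_1\ge J$. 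Hence $J_1=J$.

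Next I would rule out any bounded $\lambda_i$, i.e.\ show $J_2=J_1$. Suppose $J_1<i\le J_2$ for some $i$. Then $\lambda_i$ does not diverge to $\pm\infty$, and by Proposition~\ref{thm:nonosc} it does not oscillate either, so it must converge to a finite limit; consequently $u_i=e^{\int_0^t\lambda_i}$ converges to a finite positive value and $u_i'=\lambda_iu_i$ converges as well, so $\int_0^{t_B}u_i''(s)\,ds$ is finite. But $u_i$ stays bounded below near $t_B$, so \eqref{eqn u1} together with Proposition~\ref{lem:rho} gives
\[
\int_0^{t_B}u_i''(s)\,ds=\frac{k}{n}\int_0^{t_B}\bigl(\rho(s)-c_b\bigr)u_i(s)\,ds=+\infty,
\]
a contradiction. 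Thus $J_2=J_1=J$.

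Finally I would prove $J<n$. If $J=n$, then $J_2=n$, so by the previous two steps every $\lambda_i$ diverges to $-\infty$; hence each $u_i$ decreases to $0$ on $[0,t_B]$ with $u_i(t_B)=0$ and $u_i'(t_B)$ finite, which forces $u_i(t)\le C(t_B-t)$ near $t_B$ for some constant $C$. Then
\[
g_i=\frac{k}{n}\rho_0\prod_{\substack{m=1\\m\ne i}}^{n}\frac{1}{u_m(t)}\ \ge\ \frac{c}{(t_B-t)^{\,n-1}}
\]
near $t_B$ for some $c>0$, which is not integrable on $(0,t_B)$ when $n\ge2$, contradicting \eqref{g_1_L1}. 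Therefore $1\le J<n$, and combining with $J_1=J_2=J$ yields the stated limits, with at least one $\lambda_i\to+\infty$.

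The step I expect to be the main obstacle is the last one: one must establish the bound $u_i(t)\le C(t_B-t)$ uniformly in $i$, including the delicate case $u_i'(t_B)=0$ in which $u_i$ vanishes faster than linearly, so that $g_i\notin L^1(0,t_B)$ genuinely follows. I would also stress that the non-oscillation statement of Proposition~\ref{thm:nonosc} is exactly what makes the middle step work, since it is what lets one pass from ``$\lambda_i$ does not diverge'' to ``$\lambda_i$ converges to a finite limit.''
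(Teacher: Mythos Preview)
Your proof is correct. The arguments for $J_1=J$ and $J_1=J_2$ match the paper's: for the latter the paper works directly with $\lambda_i$ rather than $u_i$ (if $|\lambda_i|<\infty$ on $[0,t_B)$ for some $J_1<i\le J_2$, then integrating \eqref{lambda} gives $\int_0^{t_B}\rho<\infty$, contradicting Proposition~\ref{lem:rho}), but this is the same mechanism as your $u_i''$ computation. Your handling of $J<n$, however, is genuinely different from the paper's. The paper simply notes that $J=n$ forces all $\lambda_{i,0}$ to coincide and then invokes the global-existence result Theorem~2.9 of \cite{LeeLiu13} to rule out finite-time blow-up. Your route via $g_i\notin L^1(0,t_B)$ is self-contained and avoids the external reference, which is a modest gain; the paper's route is shorter because the cited result is already available.

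On your flagged obstacle: it is not a difficulty. When $J=n$, Lemma~\ref{lem:oreder} forces $\lambda_1=\cdots=\lambda_n$ identically, hence $u_1=\cdots=u_n$, so there is a single function to bound and no uniformity issue. The inequality $u_1(t)\le C(t_B-t)$ follows from $u_1(t_B)=0$ and the convergence $u_1'(t)\to-\alpha_1$ established in \eqref{alpha_i}, via the mean value theorem; the case $\alpha_1=0$ is easier, not harder, since then $u_1(t)=o(t_B-t)$ and your lower bound on $g_1$ only improves.
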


\begin{proof} From Lemma \ref{lem_3} it follows that $J = J_1$. 
And also $ J = J_1 < n$; otherwise, all $\lambda_{i,0}$ would be identical, and this implies the existence of a global solution (see Theorem 2.9 in \cite{LeeLiu13}). 

\sk Now we show $J_1 = J_2$ by a contradiction argument. Indeed, if it is assumed that $J_1 < J_2$, then there exists  $|\lambda_i| <\infty$ for $J_1 <i \leq J_2$. It follows that for all $0 < t < t_B$,  
\begin{equation*}
\int_0^t \rho(s)ds  = \int_0^t \left[c_b+\frac{n}{k}(\lambda_i'(s) +\lambda_i^2(s)) \right] ds < \infty, 
\end{equation*}
which contradicts Proposition \ref{lem:rho}. 
\end{proof}

\sk Theorem \ref{prop10} implies that for $i = 1, \cdots, J$ and $j = J+1, \cdots, n$, $u_i' = u_1' < 0 $ and $u_j'>0 $ in a neighborhood of $t_B$. Because $u_1, u_j > 0$, we observe from \eqref{abel_Thm} that $u_1'u_j$ and $u_1u_j'$ should be bounded in $[0, t_B]$. Furthermore, it follows from \eqref{u1un} that $u_1u_j$ converges to $0$. 
\begin{corr}\label{cor_u1un}
Let $t_B$ and $J$ be as in Theorem \ref{prop10}, and $u_j$ as in \eqref{def_ui}. Then, for any $J < j \leq n$,
\begin{align*}
|u_1'(t) u_j(t)| < \infty, \quad 0 \leq t <  t_B, \\
|u_1(t) u_j'(t)| < \infty,  \quad 0 \leq t < t_B, 
\end{align*}
and
\begin{equation}\label{u1un0}
\lim_{t\to t_B^-} (u_1u_j)(t) = 0.
\end{equation}
\end{corr}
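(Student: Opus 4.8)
The plan is to read off the signs of $u_1'$ and $u_j'$ near $t_B$ from Theorem~\ref{prop10} and then feed them into the two algebraic identities that Abel's identity has already produced, namely \eqref{abel_Thm} and \eqref{u1un}; no new differential-equation analysis is needed.

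Fix $J<j\le n$. By Theorem~\ref{prop10}, $\lambda_1(t)=u_1'(t)/u_1(t)\to-\infty$ and $\lambda_j(t)=u_j'(t)/u_j(t)\to+\infty$ as $t\to t_B^-$, while $u_1,u_j>0$ on $[0,t_B)$ by \eqref{def_ui}; hence there is $t^*\in(0,t_B)$ with $u_1'(t)<0$ and $u_j'(t)>0$ for $t\in[t^*,t_B)$, so that $u_1'u_j\le 0$ and $u_1u_j'\ge 0$ there. Abel's identity \eqref{abel_Thm} (with $i=1$) gives $u_1'(t)u_j(t)-u_1(t)u_j'(t)=\lambda_{1,0}-\lambda_{j,0}$ for all $t\in[0,t_B)$, and $\lambda_{1,0}-\lambda_{j,0}<0$ because $j>J$ (see \eqref{J} and Lemma~\ref{lem:oreder}). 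Thus on $[t^*,t_B)$ the two nonpositive quantities $u_1'u_j$ and $-u_1u_j'$ sum to the fixed negative constant $\lambda_{1,0}-\lambda_{j,0}$, which forces $u_1'u_j\in[\lambda_{1,0}-\lambda_{j,0},0]$ and $u_1u_j'\in[0,\lambda_{j,0}-\lambda_{1,0}]$ there. On the complementary compact interval $[0,t^*]$ both $u_1'u_j$ and $u_1u_j'$ are continuous (recall $u_1,u_j\in C^2([0,t_B))$ via \eqref{eqn u1}), hence bounded. Combining the two pieces gives the boundedness of $|u_1'u_j|$ and $|u_1u_j'|$ on all of $[0,t_B)$.

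For \eqref{u1un0} I would use \eqref{u1un} with $i=1$, i.e. $\lambda_1(t)-\lambda_j(t)=(\lambda_{1,0}-\lambda_{j,0})/(u_1(t)u_j(t))$. By Theorem~\ref{prop10} the left side tends to $-\infty$ as $t\to t_B^-$; since the numerator is a fixed negative constant and $u_1u_j>0$, this is possible only if $u_1(t)u_j(t)\to 0^+$, which is exactly \eqref{u1un0}.

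There is no real obstacle here. The one point deserving a word of care is that Theorem~\ref{prop10} only pins down the signs of $u_1'$ and $u_j'$ in a neighborhood of $t_B$, so the two-sided bound extracted from Abel's identity is valid only on $[t^*,t_B)$ and must be completed by the trivial continuity bound on $[0,t^*]$; the rest is bookkeeping with the identities established earlier in this section.
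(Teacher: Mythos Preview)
Your proof is correct and follows essentially the same route as the paper: use the sign information on $u_1',u_j'$ furnished by Theorem~\ref{prop10} together with Abel's identity \eqref{abel_Thm} to bound $u_1'u_j$ and $u_1u_j'$, and then invoke \eqref{u1un} with $\lambda_1-\lambda_j\to-\infty$ to force $u_1u_j\to0$. Your write-up is in fact more explicit than the paper's, which states these steps in a single paragraph preceding the corollary without spelling out the compact-interval patch on $[0,t^*]$.
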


\sk Now, we may divide \eqref{u_bdy} and \eqref{beta_i} into the following cases, assuming that $J < j \leq n$: 
\begin{align}
&u_1'(t_B) = -\alpha_1 <0 \quad \text{and~}\dis \lim_{t\to t_B^-} u_j(t) =  \infty, \label{t_case3}\\
&u_1'(t_B) = 0 \quad \text{and~} \dis \lim_{t\to t_B^-} u_j(t) =  \beta_j>0,\label{t_case4}\\
&u_1'(t_B) = -\alpha_1 <0  \quad\text{and~} \dis \lim_{t\to t_B^-}u_j(t) = \beta_j >0, \nonumber\\
&u_1'(t_B) = 0 \quad \text{and~} \dis \lim_{t\to t_B^-} u_j(t) =  \infty. \nonumber
\end{align} 
However, \eqref{t_case3} and \eqref{t_case4} cannot occur. Indeed, \eqref{t_case3} contradicts the boundedness of $u_1' u_j$ in Corollary \ref{cor_u1un}. If \eqref{t_case4} is assumed, then $u_1' u_j \to 0$, and thus $u_1u_j' \to - \lambda_{1,0} + \lambda_{j,0}  >0$ as $t$ approaches $t_B$. It follows that, in a neighborhood of $t_B$,
\begin{equation*}
(u_1u_j)' > 0.
\end{equation*}
This also contradicts Corollary \ref{cor_u1un}, owing to \eqref{u1un0} and the fact that  $u_1u_j >0$. Thus, we have the following proposition.
\begin{prop}\label{prop:case12}
Suppose that the maximum interval of existence for \eqref{main} is $[0,t_B)$ for some $0<t_B<\infty$. 
Define $u_j$ as \eqref{def_ui}. If $ J < j \leq n$, then either
\begin{equation}\label{t_case1}
u_1'(t_B) = -\alpha_1 \text{~and~} \dis \lim_{t\to t_B^-}u_j(t) = \beta_j ~ \text{for~some~} \alpha_1, \beta_j >0,
\end{equation}
or 
\begin{equation}\label{t_case2}
u_1'(t_B) = 0 \text{~and~} \dis \lim_{t\to t_B^-} u_j(t) =  \infty 
\end{equation}
must hold. 
\end{prop}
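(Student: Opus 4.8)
The plan is to list the four a priori possibilities for the pair $\bigl(u_1'(t_B),\, \lim_{t\to t_B^-} u_j(t)\bigr)$ and eliminate two of them. By the monotone‐convergence argument leading to \eqref{alpha_i}–\eqref{u_bdy}, the limit of $u_1'$ at $t_B$ exists and equals $-\alpha_1$ for some $\alpha_1\ge 0$, so $u_1'(t_B)$ is either strictly negative or zero. Since $J_2=J$ by Theorem \ref{prop10}, the dichotomy \eqref{beta_i} applies to every $j$ with $J<j\le n$: either $u_j(t)\to\infty$ or $u_j(t)\to\beta_j$ for some $\beta_j>0$. Combining these, the only options are \eqref{t_case1}, \eqref{t_case2}, \eqref{t_case3}, \eqref{t_case4}, and it remains to discard the last two.

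For \eqref{t_case3} — namely $u_1'(t_B)=-\alpha_1<0$ together with $u_j(t)\to\infty$ — I would invoke the boundedness of $u_1'(t)u_j(t)$ on $[0,t_B)$ from Corollary \ref{cor_u1un}: under \eqref{t_case3} the product tends to $-\infty$, a contradiction. For \eqref{t_case4} — namely $u_1'(t_B)=0$ together with $u_j(t)\to\beta_j>0$ — I would use Abel's identity \eqref{abel_Thm}, $u_1'(t)u_j(t)-u_1(t)u_j'(t)=\lambda_{1,0}-\lambda_{j,0}$. Here $u_1'u_j\to 0$ while $\lambda_{1,0}-\lambda_{j,0}<0$, so $u_1(t)u_j'(t)\to\lambda_{j,0}-\lambda_{1,0}>0$; hence $(u_1u_j)'(t)=u_1'(t)u_j(t)+u_1(t)u_j'(t)\to\lambda_{j,0}-\lambda_{1,0}>0$, so $u_1u_j$ is strictly increasing on some interval $(t_*,t_B)$. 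But $u_1u_j>0$ there and $u_1u_j\to 0$ as $t\to t_B^-$ by \eqref{u1un0}, which is impossible. This leaves exactly \eqref{t_case1} or \eqref{t_case2}.

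I do not expect a genuine obstacle here: all the analytic content has already been extracted in Theorem \ref{prop10} (which gives $J_1=J_2=J<n$), in Corollary \ref{cor_u1un} (boundedness of $u_1'u_j$ and $u_1u_j'$, and $u_1u_j\to 0$), and in the boundary analysis \eqref{u_bdy}–\eqref{beta_i}. The one point requiring a little care is purely bookkeeping: one must check that the four sign/size combinations are genuinely exhaustive and that the degenerate value $\alpha_1=0$ is permitted — it is, and it is precisely the alternative \eqref{t_case2} — so that no hidden fifth case escapes the argument. Beyond that the proof is a short case check.
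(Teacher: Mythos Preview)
Your proposal is correct and follows essentially the same route as the paper: the four a priori cases are listed, \eqref{t_case3} is eliminated via the boundedness of $u_1'u_j$ from Corollary \ref{cor_u1un}, and \eqref{t_case4} is eliminated via Abel's identity together with $u_1u_j\to 0$, exactly as you describe. The only difference is cosmetic --- you are a bit more explicit about invoking Theorem \ref{prop10} to justify that \eqref{beta_i} applies for every $J<j\le n$ and about writing out $(u_1u_j)'=u_1'u_j+u_1u_j'$ --- but the argument is the same.
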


\sk Next, we demonstrate the convergence of $u'_1u_j$ and $u_1u_j'$ for $J < j \leq n$. If  \eqref{t_case1} holds in Proposition \ref{prop:case12}, then the convergence follows from \eqref{abel_Thm}. In the case of \eqref{t_case2}, we show the convergence through several lemmas.
\begin{lemma}\label{lemma12}
Under the hypothesis of Proposition \ref{prop:case12}, for any $t \in [0,t_B)$
\begin{equation*}
 \left|\int_0^{t}   u_1'(s) u_j'(s) + g_{1j}(s) ds \right| < \infty,
\end{equation*} 
where 
\begin{equation*}
g_{ij}:=\frac{k}{n} \rho u_i u_j.
\end{equation*} 
\end{lemma}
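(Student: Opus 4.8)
The plan is to express the integrand $u_1'u_j' + g_{1j}$ in a form where the singular contributions visibly cancel, using the second-order equation \eqref{eqn u1} and the structure $g_{1j} = \frac{k}{n}\rho u_1 u_j = g_1 u_j = g_j u_1$ (recall $g_i = \frac{k}{n}\rho u_i$ from \eqref{g_1_L1}). The natural candidate is to differentiate the product $u_1' u_j$: from \eqref{eqn u1} we have $u_i'' = \frac{k}{n}(\rho - c_b)u_i$, so
\begin{equation*}
(u_1' u_j)' = u_1'' u_j + u_1' u_j' = \frac{k}{n}(\rho - c_b) u_1 u_j + u_1' u_j' = g_{1j} - \omega^2 u_1 u_j + u_1' u_j'.
\end{equation*}
Hence $u_1' u_j' + g_{1j} = (u_1' u_j)' + 2 g_{1j} - \cdots$; more cleanly, $(u_1' u_j)' = u_1' u_j' + g_{1j} - \omega^2 u_1 u_j$, so that
\begin{equation*}
\int_0^t \bigl(u_1' u_j' + g_{1j}\bigr)\, ds = u_1'(t) u_j(t) - u_1'(0) u_j(0) + \omega^2 \int_0^t u_1 u_j\, ds.
\end{equation*}

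First I would establish that each term on the right-hand side is bounded on $[0, t_B)$. The boundary term $u_1'(t) u_j(t)$ is bounded by Corollary \ref{cor_u1un} (the quantity $u_1' u_j$ is bounded on $[0, t_B]$), and $u_1'(0) u_j(0) = \lambda_{1,0}$ is a constant. For the remaining integral $\omega^2 \int_0^t u_1 u_j\, ds$, I would use \eqref{u1un0}: since $u_1 u_j \to 0$ as $t \to t_B^-$ and $u_1 u_j$ is continuous on the compact-after-extension interval $[0, t_B]$, it is bounded there, so the integral over a finite interval is finite. This gives the claimed bound.

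The only subtlety — and the step I expect to be the main obstacle — is justifying the integration by parts / fundamental theorem of calculus up to $t$ arbitrarily close to $t_B$, together with controlling the sign ambiguities when combining \eqref{t_case1} and \eqref{t_case2}. On $[0, t_B)$ all functions $u_1, u_j, u_1', u_j'$ are smooth (we are strictly inside the maximal interval), so the identity $\int_0^t (u_1' u_j)'\,ds = u_1'(t)u_j(t) - \lambda_{1,0}$ is just the fundamental theorem of calculus and requires no limiting argument; the potentially singular behavior is entirely absorbed into showing $u_1'(t)u_j(t)$ stays bounded, which is exactly Corollary \ref{cor_u1un}. Thus the argument works uniformly whether we are in case \eqref{t_case1} (where $u_1' u_j$ in fact converges, by \eqref{abel_Thm}) or case \eqref{t_case2}. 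I would close by noting that this identity will later also feed the convergence statements for $u_1' u_j$ and $u_1 u_j'$ alluded to after the lemma, since it pins down the integral of $u_1' u_j'$ up to the already-understood quantities $u_1' u_j$ and $\int u_1 u_j$.
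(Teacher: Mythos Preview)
Your proof is correct and is essentially the paper's own argument: the identity you derive, $\int_0^t (u_1'u_j' + g_{1j})\,ds = u_1'(t)u_j(t) - \lambda_{1,0} + \omega^2\int_0^t u_1 u_j\,ds$, is exactly equation \eqref{g1n_1} in the paper, obtained there by multiplying $u_1'' + \omega^2 u_1 = g_1$ by $u_j$ and integrating by parts, and the conclusion via Corollary \ref{cor_u1un} is identical.
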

\begin{proof}
From \eqref{eqn_u}, we deduce that
\begin{equation*}
u_1''u_j + \omega^2 u_1u_j = g_{1j}.
\end{equation*}
Integrating this equation over $[0,t]$ yields
\begin{equation}\label{g1n_1}
u_1'(t)u_j(t) - \lambda_{1,0} + \int_0^t \omega^2 u_1(s) u_j(s) ds = \int_0^t  u_1'(s) u_j'(s) + g_{1j}(s)ds. 
\end{equation}
Then, the lemma follows from Corollary \ref{cor_u1un}.
\end{proof}

\begin{lemma}\label{lemma13}
Under the hypothesis of Proposition \ref{prop:case12},
\begin{equation*}
 \int_0^{t}   u_1'(s) u_j'(s) + g_{1j}(s)ds
\end{equation*} 
converges as $t \to t_B^-$. 
\end{lemma}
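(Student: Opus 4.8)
The plan is to reduce the assertion to the convergence of the product $u_1'(t)u_j(t)$ as $t\to t_B^-$, and then to treat separately the two alternatives of Proposition~\ref{prop:case12}. By the identity \eqref{g1n_1},
\begin{equation*}
\int_0^t\big(u_1'(s)u_j'(s)+g_{1j}(s)\big)\,ds=u_1'(t)u_j(t)-\lambda_{1,0}+\omega^2\int_0^t u_1(s)u_j(s)\,ds,
\end{equation*}
and the last integral converges as $t\to t_B^-$, because $u_1u_j$ is continuous on $[0,t_B)$ with $\lim_{t\to t_B^-}u_1u_j=0$ by Corollary~\ref{cor_u1un}, hence bounded on $[0,t_B)$, and $t_B<\infty$. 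Thus the integral in the lemma converges if and only if $u_1'(t)u_j(t)$ does; and, by Corollary~\ref{cor_u1un} (recall $-(\lambda_{j,0}-\lambda_{1,0})<u_1'u_j<0$ near $t_B$), the quantity $u_1'(t)u_j(t)$ is bounded on $[0,t_B)$, which I will use below. In case \eqref{t_case1} we are finished at once, since $u_1'(t)\to-\alpha_1$ and $u_j(t)\to\beta_j$, so $u_1'(t)u_j(t)\to-\alpha_1\beta_j$. The substance of the lemma is therefore the degenerate case \eqref{t_case2}, where $u_1'(t)\to0$ while $u_j(t)\to\infty$ and the product is indeterminate.

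In case \eqref{t_case2} my plan is to show that $u_1'(t)u_j(t)$ is eventually monotone near $t_B$; being bounded, it then converges. Because $\lambda_1\to-\infty$ and $\rho\to\infty$, for $t$ near $t_B$ one has $u_1'=\lambda_1u_1<0$, $u_j'=\lambda_ju_j>0$ and, by \eqref{eqn u1}, $u_1''=\tfrac kn(\rho-c_b)u_1>0$; differentiating gives
\begin{equation*}
\big(u_1'(t)u_j(t)\big)'=u_1''(t)u_j(t)+u_1'(t)u_j'(t)=u_1(t)u_j(t)\,\Phi(t),\qquad\Phi:=\lambda_1\lambda_j+\tfrac kn(\rho-c_b).
\end{equation*}
As $u_1u_j>0$, the monotonicity of $u_1'u_j$ on an interval is dictated by the sign of $\Phi$ there, so the task becomes to show that $\Phi$ keeps a fixed sign on some $(t_*,t_B)$. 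Using the Riccati identity $\tfrac kn(\rho-c_b)=\lambda_i'+\lambda_i^2$ from \eqref{lambda}, one has $\Phi=\lambda_1'+\lambda_1\sigma=\tfrac12(\sigma'+\sigma^2)$ with $\sigma:=\lambda_1+\lambda_j$, and from \eqref{u1un} one knows $\lambda_j-\lambda_1=(\lambda_{j,0}-\lambda_{1,0})/(u_1u_j)\to\infty$. I would then argue by contradiction: if $\Phi$ changed sign along a sequence $t_m\uparrow t_B$, then $u_1'u_j$ would have infinitely many interior extrema accumulating at $t_B$, at each of which $\Phi=0$; following the evolution of $\sigma$ through $\sigma'=2\Phi-\sigma^2$ between consecutive zeros of $\Phi$ and combining $\arctan$-type a priori estimates in the spirit of Proposition~\ref{thm:nonosc} with Proposition~\ref{lem:rho} and the blow-up of $\lambda_j-\lambda_1$, one should rule out such persistent oscillation and conclude that $u_1'u_j$ is monotone near $t_B$.

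I expect this sign analysis of $\Phi$ in the degenerate regime $u_1'(t_B)=0$ — equivalently, excluding indefinite oscillation of the Riccati-type quantity $u_1'u_j$ — to be the main obstacle; the reductions around it are routine. An alternative route for the same step uses that $u_1$ vanishes at $t_B$: then Abel's identity \eqref{abel_Thm} forces $u_1(t)=(\lambda_{j,0}-\lambda_{1,0})\,u_j(t)\int_t^{t_B}u_j(s)^{-2}\,ds$, whence
\begin{equation*}
u_1'(t)u_j(t)=(\lambda_{j,0}-\lambda_{1,0})\left(u_j(t)u_j'(t)\int_t^{t_B}\frac{ds}{u_j(s)^2}-1\right),
\end{equation*}
so it suffices to prove that $u_j(t)u_j'(t)\int_t^{t_B}u_j(s)^{-2}\,ds$ converges; writing $z:=u_j^2$, which near $t_B$ is increasing and convex since $z''=2(u_j')^2+2\tfrac kn(\rho-c_b)u_j^2>0$, this is the convergence of $\tfrac12 z'(t)\int_t^{t_B}z(s)^{-1}\,ds$, to be extracted from the convexity of $z$ and the second-order equation it satisfies. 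In either route, once $u_1'(t)u_j(t)$ is shown to converge, the lemma follows from the identity in the first paragraph.
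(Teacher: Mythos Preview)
Your reduction via \eqref{g1n_1} to the convergence of $u_1'(t)u_j(t)$ is correct, as is the treatment of case \eqref{t_case1}. The gap is in case \eqref{t_case2}: you propose two strategies but complete neither. The phrases ``one should rule out such persistent oscillation'' and ``to be extracted from the convexity of $z$'' mark exactly the missing steps, and they are not routine. For the first route, the sign of $\Phi=\lambda_1\lambda_j+\tfrac{k}{n}(\rho-c_b)$ is genuinely delicate: both summands are large and of opposite sign, and the asymptotics eventually established in Theorem~\ref{mainthm+} show that in every regime covered by \eqref{t_case2} their leading singular contributions \emph{cancel} (this is precisely the product--sum relation encoded in \eqref{quadratic}), so the sign of $\Phi$ is governed by lower-order terms that are not visible at this stage of the argument. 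For the second route, convexity of $z=u_j^2$ by itself does not force $z'(t)\int_t^{t_B}z^{-1}$ to converge; you would need to use the specific ODE for $z$, and you have not said how. (A minor aside: you invoke $\rho\to\infty$ to get $u_1''>0$, but only $\int_0^t\rho\to\infty$ is available here via Proposition~\ref{lem:rho}; in any event you do not actually use $u_1''>0$ afterwards.)

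The paper's argument in case \eqref{t_case2} is quite different. It inserts the variation-of-parameters representations \eqref{general_u2} and \eqref{general_u1} for $u_1'$ and $u_j'$ into $\int_0^t u_1'u_j'\,ds$ and splits the result as $\Rom{1}+\Rom{2}$; the piece $\Rom{1}$ converges because $g_1\in L^1(0,t_B)$ by \eqref{g_1_L1}. For the remainder one sets $h(t):=\Rom{2}+\int_0^t g_{1j}$, changes the order of integration, and obtains
\[
h(t)=\frac{1}{2\omega}\int_0^t\!\int_{t_B}^y g_1(x)g_j(y)\bigl[\omega(x-y)\cos\omega(x-y)-\sin\omega(x-y)\bigr]\,dx\,dy,
\]
whose integrand has a definite sign once $\omega(t_B-t)<\pi/2$. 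Hence $h$ is eventually monotone, and the boundedness from Lemma~\ref{lemma12} yields convergence. This replaces your qualitative oscillation analysis of $\Phi$ (equivalently of $(u_1u_j)''$) by a monotonicity computation on an explicitly constructed quantity whose sign can be read off directly.
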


\begin{proof}
If \eqref{t_case1} holds in Proposition \ref{prop:case12}, then the lemma immediately follows from \eqref{g1n_1} and \eqref{u1un0}. 

\sk In the case of \eqref{t_case2}, from \eqref{general_u2} and \eqref{general_u1} we have that
\begin{align*}
u_1'(s) &= \int_{t_B}^s g_1(\tau) \cos \omega (s-\tau) d\tau,\\
u_j'(s) &= \lambda_{j0} \cos \omega s - \omega \sin \omega s + \int_0^s g_j(\tau) \cos \omega(s-\tau) d\tau,
\end{align*}
and 
\begin{align*}
\int_0^{t} u_1'(s)u_j'(s) ds & = \int_0^{t} \left[\int_{t_B}^s g_1(x) \cos \omega (s-x) d x \left(\lambda_{j0} \cos \omega s - \omega \sin \omega s\right) \right]ds \\ 
&+ \int_0^t \left[\int_{t_B}^s g_1(x) \cos \omega (s-x) dx \int_0^s g_j(y) \cos \omega (s-y) dy \right]ds \\
& =: \Rom{1} + \Rom{2}.
\end{align*}
We notice that  $\frac{d}{dt}\Rom{1} \to 0$ as $t\to t_B^-$, because $g_1 \in L^1(0,t_B)$. It follows that $\Rom{1}$ also converges. Thus, it suffices to show that 
\begin{equation*}
h(t) := \Rom{2} + \int_0^t g_{1j}(s) ds 
\end{equation*}
converges.

\sk Changing the order of integration yields
\begin{equation*}
\Rom{2} = \int_0^t \int_{t_B}^y g_1(x) g_j(y)  \left( \frac1{2\omega} \sin\omega(x-y) + \frac{x-y}{2} \cos \omega(x-y)\right) dx dy
\end{equation*}
and the integral representation of $u_1$, \eqref{general_u2}, yields 
\begin{align*}
\int_0^t g_{1j}(y) dy &=  \int_0^t g_j(y) u_1(y) dy \\
& = \int_0^t \int_{t_B}^y g_j(y) g_1(x) \frac{1}{\omega} \sin \omega (y-x) dx   dy.
\end{align*}
We combine the two equations to obtain
\begin{align*}
h(t) = \frac{1}{2\omega} \int_0^t \int_{t_B}^y g_1(x) g_j(y) \left[ \omega(x-y) \cos\omega(x-y) - \sin\omega(x-y)\right] dx dy. 
\end{align*}  
Now, take $0< t_0 < t_B$ such that
\begin{equation*}
\omega (t_B -t_0) < \frac{\pi}{2}.
\end{equation*}
Then, for $t_0 \leq t < t_B$,
\begin{equation*}
h(t)  =  \frac{1}{2\omega} \int_{t_0}^t \int_{t_B}^y g_1(x) g_j(y) \left[ \omega(x-y) \cos\omega(x-y) - \sin\omega(x-y)\right] dx dy + h(t_0)
\end{equation*}
is a decreasing function, as the integrand $h'(t)$ is negative over the domain $(t_0, t_B)$. Furthermore, we observe from Lemma \ref{lemma12} and the convergence of $\Rom{1}$ that 
\begin{equation*}
h(t) = \left(\int_0^{t} u_1'(s)u_j'(s) ds +  \int_0^t g_{1j}(s) ds \right) - \Rom{1} 
\end{equation*}
is bounded. It follows that $h(t)$ converges as $t\to t_B^-$, as desired. 
\end{proof}

\sk 
\sk We proceed to show the convergence of $u_1'u_j$ and $u_1 u_j'$, which gives \eqref{keylemma} in Theorem \ref{mainthm}.
\begin{thm}\label{prop15}
Suppose that the maximum interval of existence for \eqref{main} is $[0,t_B)$ for some $0<t_B<\infty$. 
Define $u_j$ as \eqref{def_ui}. If $ J < j \leq n$, then there exist $0 \leq q_j \leq p_j$ such that
\begin{equation*}
\lim_{t\to t_B^-} u'_1(t) u_j(t) = -p_j, \quad \lim_{t\to t_B^-} u_1(t) u_j'(t) = q_j  
\end{equation*}
\end{thm}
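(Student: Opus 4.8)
The plan is to use the dichotomy supplied by Proposition \ref{prop:case12}. If \eqref{t_case1} holds, then $u_1'(t)\to-\alpha_1$ and $u_j(t)\to\beta_j$ as $t\to t_B^-$ with $\alpha_1,\beta_j>0$, so $u_1'(t)u_j(t)$ converges and one sets $p_j:=\alpha_1\beta_j$; Abel's identity \eqref{abel_Thm}, written as $u_1(t)u_j'(t)=u_1'(t)u_j(t)+(\lambda_{j,0}-\lambda_{1,0})$, then shows that $u_1(t)u_j'(t)$ converges to $q_j:=-p_j+(\lambda_{j,0}-\lambda_{1,0})$.

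If instead \eqref{t_case2} holds, the products $u_1'u_j$ and $u_1u_j'$ are of indeterminate type $0\cdot\infty$, so I would return to the identity \eqref{g1n_1}, namely $u_1'(t)u_j(t)=\lambda_{1,0}-\omega^2\int_0^t u_1(s)u_j(s)\,ds+\int_0^t(u_1'(s)u_j'(s)+g_{1j}(s))\,ds$. The last integral converges as $t\to t_B^-$ by Lemma \ref{lemma13}, and $\int_0^t u_1u_j\,ds$ converges because $u_1u_j$ is continuous on $[0,t_B)$ and tends to $0$ at $t_B$ by Corollary \ref{cor_u1un}, hence is bounded on $[0,t_B)$. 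Therefore $u_1'(t)u_j(t)$ converges, and I denote its limit by $-p_j$; Abel's identity again yields convergence of $u_1(t)u_j'(t)$ to $q_j:=-p_j+(\lambda_{j,0}-\lambda_{1,0})$. This case is the main obstacle: the naive passage to the limit in each factor fails, and the convergence has to be extracted entirely from the cancellation between $u_1'u_j'$ and $g_{1j}$ established in Lemmas \ref{lemma12} and \ref{lemma13}.

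It then remains to verify the bounds $0\leq q_j\leq p_j$, uniformly in both cases. Near $t_B$ one has $u_1>0$ and $u_1'<0$ since $\lambda_1=u_1'/u_1\to-\infty$, and $u_j>0$ with $u_j'>0$ since $\lambda_j=u_j'/u_j\to+\infty$; hence $u_1'u_j<0$ and $u_1u_j'>0$ in a left neighborhood of $t_B$, and passing to the limit gives $p_j\geq0$ and $q_j\geq0$. Finally $(u_1u_j)'=u_1'u_j+u_1u_j'\to q_j-p_j$; if this limit were positive, then $u_1u_j$ would be strictly increasing on some interval $[t^*,t_B)$ and therefore bounded below there by $u_1(t^*)u_j(t^*)>0$, contradicting $\lim_{t\to t_B^-}(u_1u_j)(t)=0$ from Corollary \ref{cor_u1un}. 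Hence $q_j\leq p_j$, which finishes the plan.
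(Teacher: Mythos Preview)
Your argument is correct and, for the convergence of $u_1'u_j$ and $u_1u_j'$, follows the paper's line exactly: both you and the paper pass from Lemma~\ref{lemma13} through the identity \eqref{g1n_1} to get $\lim u_1'u_j$, and then invoke Abel's identity \eqref{abel_Thm} to get $\lim u_1u_j'$. Your explicit case split according to Proposition~\ref{prop:case12} is a harmless elaboration; the paper simply cites Lemma~\ref{lemma13}, whose proof already handles both cases.

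The only genuine difference is in the verification of $q_j\le p_j$. The paper argues by contradiction at the level of the original ODE: if $p_j<q_j$ then eventually $\lambda_j(t)>-\lambda_1(t)$, hence $\lambda_j'=-\lambda_j^2+\tfrac{k}{n}(\rho-c_b)<-\lambda_1^2+\tfrac{k}{n}(\rho-c_b)=\lambda_1'$, and integrating contradicts $\lambda_1\to-\infty$, $\lambda_j\to+\infty$. You instead observe that $(u_1u_j)'\to q_j-p_j$, so a positive limit would force $u_1u_j$ to be eventually increasing and bounded away from $0$, contradicting \eqref{u1un0}. Your route is slightly more economical, since it stays entirely within the $u$-variables and reuses the monotonicity idea the paper already used to exclude case \eqref{t_case4}; the paper's route has the minor advantage of making the connection to the $\lambda_i$ dynamics explicit. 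Both are valid and comparably short.
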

\begin{proof}
The convergence of $u_1'u_j$ follows from Lemma \ref{lemma13} together with \eqref{g1n_1}, and the convergence of $u_1u_j'$ follows from \eqref{abel_Thm}.  

\sk Clearly, $p_j,q_j \geq 0$ and $p_j+q_j = -(\lambda_{1,0}-\lambda_{j,0})$, by \eqref{abel_Thm}. Furthermore, one can show  that $0 \leq q_j \leq p_j$. Suppose that $p_j < q_j$. Then, there exists $t_1 \in (0,t_B)$ such that  if $t_1 < t < t_B$, then
\begin{align*}
\frac{\lambda_j(t)}{-\lambda_1(t)} > 1 
\end{align*}
and 
\begin{equation*}
\lambda_j'(t) = -\lambda_j^2(t) + \frac{k}{n}(\rho(t) -c_b) < -\lambda_1^2(t)  + \frac{k}{n}(\rho(t) -c_b) = \lambda_1'(t).
\end{equation*}
Integration over $[t_1,t]$ yields
\begin{equation*}
\lambda_j(t) - \lambda_j(t_1) < \lambda_1(t) - \lambda_1(t_1)
\end{equation*}
which contradicts the fact that $\lambda_1 \to -\infty$ and $\lambda_j \to +\infty$.
\end{proof}  
\sk From now on, we let $p$ and $q$ denote $p_n$ and $q_n$, respectively. Then from Theorem \ref{prop15} either 
\begin{equation*}
p>q
\end{equation*}
or
\begin{equation*}
p=q
\end{equation*}
must hold. We investigate the solution behaviors stated in Theorem \ref{mainthm+} by considering these cases in the following two sections. Indeed, we obtain (c) of (\rom{2}) in Theorem \ref{mainthm+} by assuming that $p=q$, and all the other cases follow from $p > q$.

\section{The case $\boldsymbol{p > q}$}
\sk In this section, we describe the behaviors of blow-up solutions of \eqref{main} assuming that 
\begin{equation*}
p > q.
\end{equation*}

\sk We first state a technical lemma.  
\begin{lemma}\label{lemma:R}
Suppose that a function $R(t)$ defined in $[0,t_B)$ satisfies 
\begin{equation*}
(t_B -t) R(t) \to 0 \quad \text{~as~} t \to t_B^-.
\end{equation*}
Then, 
\begin{align*}
&\lim_{t\to t_B^-} (t_B-t)\int_0^t R^2(s) d s =0,\\
&\lim_{t\to t_B^-} (t_B-t) \int_0^t\frac{1}{t_B-s} R(s) ds =0.
\end{align*}
Furthermore, for any $0<\ep<1$ there exists $M>0$ such that 
\begin{align}\label{lemma:R_3}
\frac{(t_B-t)^\ep}{M} < e^{-\int_0^t R(s) d s} < \frac{M}{(t_B-t)^\ep}.
\end{align} 
\end{lemma}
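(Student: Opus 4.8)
The plan is to reduce all three assertions to a single elementary device. Write $\phi(t):=(t_B-t)R(t)$, so $\phi(t)\to 0$ as $t\to t_B^-$; hence for each $\delta>0$ we may fix $t_\delta\in(0,t_B)$ with $|R(s)|\le \delta/(t_B-s)$ for all $s\in[t_\delta,t_B)$. For each of the three integrals $\int_0^t(\cdots)\,ds$ I would split as $\int_0^{t_\delta}(\cdots)\,ds+\int_{t_\delta}^t(\cdots)\,ds$. The first piece is a fixed finite constant (here I use that $R$ is continuous, hence locally integrable, on $[0,t_B)$, which is the case in every application of this lemma), so it is annihilated by the prefactor $(t_B-t)\to 0$ in the first two claims and is harmless in the third; the whole argument then comes down to bounding the tail $\int_{t_\delta}^t(\cdots)\,ds$ via $|R(s)|\le\delta/(t_B-s)$.

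For the first claim, on the tail $\int_{t_\delta}^t R^2(s)\,ds\le \delta^2\int_{t_\delta}^t (t_B-s)^{-2}\,ds\le \delta^2/(t_B-t)$, so $(t_B-t)\int_0^t R^2(s)\,ds\le (t_B-t)C_\delta+\delta^2$; sending $t\to t_B^-$ gives $\limsup_{t\to t_B^-}(t_B-t)\int_0^t R^2\le\delta^2$, and since $\delta>0$ is arbitrary the limit is $0$. The second claim has the same structure: on the tail $\bigl|\int_{t_\delta}^t (t_B-s)^{-1}R(s)\,ds\bigr|\le \delta\int_{t_\delta}^t (t_B-s)^{-2}\,ds\le \delta/(t_B-t)$, so the quantity in question is $\le (t_B-t)C_\delta'+\delta$, its $\limsup$ is $\le\delta$, and hence $0$.

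For the third estimate the only new feature is that the tail of $\int R$ produces a logarithm: $\bigl|\int_{t_\delta}^t R(s)\,ds\bigr|\le \delta\int_{t_\delta}^t (t_B-s)^{-1}\,ds=\delta\ln\frac{t_B-t_\delta}{t_B-t}$. After shrinking $t_\delta$ so that $t_B-t_\delta<1$, this yields for $t\in[t_\delta,t_B)$ the bound $\bigl|\int_0^t R(s)\,ds\bigr|\le C_\delta''+\delta\ln\frac{1}{t_B-t}$, whence $e^{-C_\delta''}(t_B-t)^{\delta}\le e^{-\int_0^t R(s)\,ds}\le e^{C_\delta''}(t_B-t)^{-\delta}$. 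Given $0<\ep<1$, I take $\delta=\ep$ (in fact any $\delta>0$ works), which proves the two-sided bound on $[t_\delta,t_B)$ with $M=e^{C_\delta''}$. On the remaining compact interval $[0,t_\delta]$ the function $e^{-\int_0^t R}$ is continuous and strictly positive and $(t_B-t)^{\pm\ep}$ is bounded, so after enlarging $M$ the inequality holds on all of $[0,t_B)$; a final slight enlargement of $M$ converts the non-strict inequalities into strict ones.

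I do not expect any substantial obstacle: the lemma is essentially a bookkeeping statement. The only points needing care are tracking the sign of $\ln(t_B-t)$ (which forces the harmless reduction $t_B-t_\delta<1$), matching the free parameter $\delta$ to the prescribed $\ep$ in the third part, and noting that the ``initial segment'' constants $C_\delta,C_\delta',C_\delta''$ depend on $\delta$ but not on $t$, so they are killed by $(t_B-t)\to 0$ or absorbed into $M$; I would also state explicitly at the outset that $R$ is assumed locally integrable so that these constants are finite.
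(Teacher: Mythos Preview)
Your proof is correct. The treatment of the third estimate is essentially identical to the paper's: fix a threshold past which $|R(s)|\le \ep/(t_B-s)$, bound the tail of $\int R$ by $\ep|\ln(t_B-t)|$ plus a constant, and exponentiate. For the first two limits, however, the paper takes a different and shorter route: it simply invokes l'H\^opital's rule on $\int_0^t R^2(s)\,ds\big/(t_B-t)^{-1}$ and $\int_0^t (t_B-s)^{-1}R(s)\,ds\big/(t_B-t)^{-1}$, where the quotient of derivatives is $[(t_B-t)R(t)]^2\to 0$ and $(t_B-t)R(t)\to 0$ respectively (with the trivial case that the numerator stays bounded handled separately). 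Your splitting argument is more elementary and self-contained---it avoids the implicit case distinction in l'H\^opital and makes the role of the smallness parameter $\delta$ explicit---at the cost of a few more lines. Both approaches are perfectly sound; the paper's is terser, yours is more transparent about why the limits vanish.
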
  
\begin{proof}
The first two limits follow from l'H\^{o}pital's rule. 
Let $0 < \ep <1$. Then, because $\lim_{s\to t_B^-}(t_B-s) R(s) = 0$, there exists $t_1 \in (0,t_B)$ such that for all $t_1 < s < t_B $, 
\begin{equation*}
(t_B -s) | R(s)| < \ep.
\end{equation*}
Then,  for $t > t_1$,
\begin{align*}
\left| \int_0^t R(s) d s \right| &\leq \int_0^t (t_B-s) |R(s)| \frac{1}{t_B -s} ds\\
&< \ep \int_{t_1}^t \frac{1}{t_B -s} ds + \int_0^{t_1} |R(s)| ds \\
& \leq -\ep \ln(t_B - t) + C, 
\end{align*}
for some constant $C$ that is independent of $t$. With $M=e^C$ it follows that 
\begin{align*}
 \frac{(t_B-t)^\ep}{M} < e^{-\int_0^t R(s) d s} < \frac{M}{(t_B-t)^\ep}.
\end{align*}
\end{proof}

\sk Because of  \eqref{u1un0} and Theorem \ref{prop15}, we set $(u_1u_n)(t_B)=0$ and $(u_1u_n)'(t_B)=-p+q <0$. Then, for some $\eta(t)$ such that
\begin{equation}\label{condition_eta}
\eta(t_B)=0,\quad \eta'(t_B)=0,
\end{equation}
it holds that 
\begin{equation*}
(u_1u_n)(t) = (p-q)(t_B-t) + \eta(t).
\end{equation*}  
It follows that 
\begin{align*}
\lambda_1(t) - \lambda_n(t) & = \frac{\lambda_{1,0}-\lambda_{n,0}}{(u_1u_n)(t)}  = \frac{\lambda_{1,0}-\lambda_{n,0}}{(p-q)(t_B-t) + \eta(t)} = \frac{-p -q}{(p-q)(t_B-t) + \eta(t)},\\
\lambda_1(t) + \lambda_n(t) &= \frac{(u_1u_n)'(t)}{u_1u_n(t)}  =\frac{-(p-q)+ \eta'(t)}{(p-q)(t_B-t) + \eta(t)}.
\end{align*}
Hence, 
\begin{align*}
\lambda_1(t) &= \frac{-p + \eta'(t)/2}{(p-q)(t_B-t) + \eta(t)},\\
\lambda_n(t) &= \frac{q + \eta'(t)/2}{(p-q)(t_B-t) + \eta(t)}.
\end{align*}
Owing to \eqref{condition_eta} we have the following forms: 
\begin{align*}
\lambda_1(t) &= \frac{-p}{p-q} \frac{1}{t_B-t} + R_1(t),\\
\lambda_n(t) &= \frac{q}{p-q} \frac{1}{t_B-t} + R_n(t),
\end{align*}
where $R_j(t)$ ($j=1,n$) satisfies $\lim_{t\to t_B^-}R_j(t) (t_B -t) =0$.

\sk Let 
  \begin{equation*}
   \lambda_j (t) = \frac{\xi_j}{t_B-t} + R_j \quad (j=1,n),
  \end{equation*}
with 
\begin{equation}\label{def_xi}
\xi_1:= \frac{-p}{p-q}, \quad \xi_n := \frac{q}{p-q}.
\end{equation}
Substituting this  into the main equation \eqref{lambda} yields 
  \begin{equation}\label{eqn_R_j}
  R_j'(t)= -\frac{\xi_j^2 +\xi_j}{(t_B-t)^2} - R_j ^2(t) - \frac{2\xi_j}{t_B-t}R_j(t) + \frac{k\rho_0}{n} e^{-\int_0^t \lambda(s) ds} - \omega^2.
  \end{equation}
Integrating  over $(0,t)$ and multiplying by $(t_B-t)$ give 
  \begin{align*}
  (t_B-t)R_j(t) &= -(\xi_j^2 +\xi_j) -  (t_B-t)\int_{0}^t \left[ R_j^2(\tau) + \frac{2\xi_j}{t_B-\tau} R_j(\tau)\right] d\tau \\
   &~ + \frac{k\rho_0}{n} (t_B-t)\int_{0}^t e^{-\int_0^\tau \lambda(s) ds}  d\tau  \\
   &~ + (t_B-t)\left[ \frac{\xi_j^2 +\xi_j}{t_B }-  \omega^2 t + R_j(0)\right]. 
      \end{align*}
Because $(t_B-t)\int_0^t \left[ R_j^2(\tau) + {2\xi_j R_j(\tau)}/(t_B-\tau)  \right] d\tau $ converges to $0$ as $t\to t_B^-$ by Lemma \ref{lemma:R}, we obtain the following quadratic equation for $\xi$:
  \begin{equation}\label{quadratic}
  \xi^2+\xi - \frac{k\rho_0}{n} \lim_{t\to t_B^-} (t_B-t)\int_{0}^t e^{-\int_0^\tau \lambda(s) ds} d\tau =0.
  \end{equation}
Here, $\xi = \xi_1, \xi_n$, for which the limit in \eqref{quadratic} must exist.

\sk Owing to Lemma \ref{lem_3} together with Theorem \ref{prop10}, we have that
\begin{align}
\lambda_i (t) &= \frac{\xi_1}{t_B-t} + R_i(t), \quad R_i(t) = R_1(t), \quad 1 \leq i \leq J, \label{def_lambda1}\\
\lambda_i (t) &= \frac{\xi_n}{t_B-t} + R_i(t), \quad J < i \leq n,\label{def_lambda2}
\end{align} 
where $  \lim_{t\to t_B^-} (t_B-t) R_i(t) =0$ for all $1\leq i \leq n$. It follows that
\begin{align}
\lambda(t) = \frac{-pJ + q(n-J)}{p-q} \frac{1}{t_B-t} + \sum_{i=1}^n R_i(t) = \frac\gamma{t_B-t} + R(t),\label{def_lambda}
\end{align}
where
\begin{align}
\gamma &:= \dis \frac{-pJ + q(n-J)}{p-q},\label{def_gamma}\\
R(t) &:= \sum_{i=1}^n R_i(t),\quad \lim_{t\to t_B^-} (t_B-t)R(t) =0.\nonumber
\end{align}
Now, we evaluate the limit in \eqref{quadratic} as follows. Note that
\begin{equation}\label{temp_21}
\int_{0}^t e^{-\int_0^\tau \lambda(s) ds} d\tau 
=  t_B^{-\gamma} \int_0^t (t_B-\tau)^\gamma  e^{-\int_0^\tau R(s) ds} d\tau.
\end{equation}
If follows from \eqref{lemma:R_3} that for any $ 0 < \ep < 1$, there exists $M>0$ such that 
\begin{equation*}
\frac{ t_B-t}{M}  \int_0^t (t_B- \tau)^{\gamma + \ep} d \tau  < (t_B-t) \int_0^t (t_B-\tau)^\gamma  e^{-\int_0^\tau R(s) ds}  d\tau < M (t_B-t) \int_0^t (t_B -\tau)^{\gamma - \ep} d \tau. 
\end{equation*}
Assume that $\gamma +2 <0$. Then, the lower bound     
\begin{equation*}
\frac{-1}{M(\gamma+1+\ep)} \left[ (t_B-t)^{\gamma+2 +\ep} - t_B^{\gamma +1 +\ep} (t_B-t) \right]  \to + \infty \text{~as~} t\to t_B^-
\end{equation*}
by taking $\ep$ sufficiently small so that $\gamma + 2 + \ep <0$. This is not the case, as the limit in \eqref{quadratic} must converge, as previously mentioned. On the other hand,  $\gamma + 2 > 0$ implies that
the upper bound 
\begin{equation*}
\frac{-M}{\gamma+1 -\ep} \left[ (t_B-t)^{\gamma+2 -\ep} - t_B^{\gamma +1 -\ep} (t_B-t) \right]  \to 0 \text{~as~} t\to t_B^-
\end{equation*} 
by taking $\ep$ such that $\gamma+2 -\ep >0$ and  $\gamma - \ep  \neq -1$. This ensures that 
\begin{equation*}
\xi^2 + \xi =0. 
\end{equation*}
It follows that 
\begin{equation*}
\xi_1 = \frac{-p}{p-q}=-1, \quad \xi_n =\frac{q}{p-q}=0.
\end{equation*}
Substituting $q=0$ into \eqref{def_gamma} together with $\gamma + 2 >0$ then yields 
\begin{equation*}
J=1.
\end{equation*}

\sk Now, consider the case that $\gamma + 2 =0$. We first claim that 
\begin{subequations}\label{converge_R_2}
\begin{eqnarray}
&~&\lim_{t\to t_B^-} (t_B-t) \int_0^t (t_B-\tau)^{-2} e^{-\int_0^\tau R(s) ds} d\tau \label{temp10}\\
&=& \lim_{t\to t_B^-} e^{-\int_0^t R(s) ds}.\label{temp11}
\end{eqnarray}
\end{subequations} 
\sk We remark that, in general, the convergence of \eqref{temp10}, which we have already verified, does not guarantee the convergence of \eqref{temp11}, because \eqref{temp10} may converge for an oscillating divergent $\int_0^t R(s) ds$. However, the decay property of $R$ can eliminate this case. By integration by parts, 
\begin{align}
& ~ \lim_{t\to t_B^-} (t_B-t) \int_0^t (t_B-\tau)^{-2} e^{-\int_0^\tau R(s) ds} d\tau \nonumber\\
& = \lim_{t\to t_B^-} \left[ e^{-\int_0^t R(s) ds}  -\frac{t_B-t}{t_B} + (t_B-t) \int_0^t (t_B-\tau)^{-1} e^{-\int_0^\tau R(s) ds} R(\tau) d\tau  \right]. \label{converge_R_1}
\end{align} 
Recall that $(t_B-t)R(t) \to 0$ as $t\to t_B^-$. Then, there exists $t_1 \in (0,t_B)$ such that 
\begin{equation*}
(t_B -t)|R(t)| < 1, \quad t_1< t< t_B,
\end{equation*}
and  
\begin{align} 
&~~ \left|(t_B-t) \int_0^t (t_B-\tau)^{-1} e^{-\int_0^\tau R(s) ds} R(\tau) d\tau\right| \label{temp_3}\\
&\leq (t_B-t) \int_0^{t_1} (t_B-\tau)^{-1} e^{-\int_0^\tau R(s) ds} |R(\tau)| d\tau + (t_B-t) \int_{t_1}^t (t_B-\tau)^{-2} e^{-\int_0^\tau R(s) ds}  d\tau.\label{temp_2}
\end{align}
Because \eqref{temp10} converges, the second term in \eqref{temp_2} converges, and thus \eqref{temp_3} converges as $t\to t_B^-$. The convergence of $\exp \Big({-\int_0^{t_B} R(s) ds}\Big)$ follows from \eqref{converge_R_1}.  Now, apply l'H\^{o}pital's rule to obtain \eqref{converge_R_2}. 

\sk Thus, the case with $\gamma+2=0$ may be considered as either     
\begin{equation}\label{gamma=2_case1}
\lim_{t\to t_B^-} (t_B-t) \int_0^t (t_B-\tau)^\gamma  e^{-\int_0^\tau R(s) ds} d\tau =  \lim_{t\to t_B^-}  e^{-\int_0^t R(s) ds} = 0,
\end{equation}
or  
\begin{equation}\label{gamma=2_case2}
\lim_{t\to t_B^-} (t_B-t) \int_0^t (t_B-\tau)^\gamma  e^{-\int_0^\tau R(s) ds} d\tau = \lim_{t\to t_B^-}  e^{-\int_0^t R(s) ds} = R_0 > 0.
\end{equation} 
For the case that \eqref{gamma=2_case1}, a similar argument as that in the case for $\gamma +2 <0$ yields 
\begin{equation*}
\xi_1 = \frac{-p}{p-q}=-1, \quad \xi_n =\frac{q}{p-q}=0,
\end{equation*}
and  
\begin{equation*}
J=2.
\end{equation*} 
Furthermore, \eqref{gamma=2_case1} implies that 
\begin{equation*}
\lim_{t\to t_B^-}  \int_0^t R(s) ds  = \infty. 
\end{equation*} 
\sk For the case that \eqref{gamma=2_case2}, we deduce from \eqref{quadratic} and \eqref{temp_21} that 
\begin{equation*}
\xi^2 + \xi -\dis \frac{k \rho_0t_B^2 R_0}{n }=0,
\end{equation*}
and from \eqref{def_gamma} that 
\begin{equation*}
p(J-2) = q(n-J-2).
\end{equation*}
We divide this into two cases, by taking into account $p>q$:
\begin{align*}
&J=2,~ n=4 \quad \text{or}\\
&J\geq 3,~ n > 2J 
\end{align*}

\sk In summary, we have the following: 
\begin{thm}\label{thm:case1}
Suppose that $[0,t_B)$ be the maximum interval of existence for \eqref{main}. Define $u_i$ as \eqref{def_ui}, and let 
\begin{equation*}
\lim_{t\to t_B^-} u_1'(t) u_n(t) = -p, \quad \lim_{t\to t_B^-} u_1(t) u_n'(t) = q.
\end{equation*}
If $p>q$, then $\lambda_i$ ($i=1,2, \cdots,n$) and $\lambda$ can be represented by \eqref{def_lambda1}, \eqref{def_lambda2}, and \eqref{def_lambda}. Moreover, one of the following must hold, where $\xi = \xi_1,~ \xi_n$:
\begin{itemize}
\item[] \textbf{(1)}   $J=1$ and
\begin{equation*}
\xi^2 + \xi =0.
\end{equation*}
\item[] \textbf{(2-a)}  $J=2$, $ \lim_{t\to t_B^-}  \int_0^t  R(s) ds = \infty
$, and 
\begin{equation*}
\xi^2 + \xi =0.
\end{equation*}
\item[] \textbf{(2-b)} $J=2, ~ n=4$, $ \lim_{t\to t_B^-}   \exp({-\int_0^t R(s) ds}) = R_0 > 0 $, and 
\begin{equation*}
\xi^2 + \xi  - \frac{k \rho_0t_B^2 R_0}{4}=0.
\end{equation*}
\item[] \textbf{(3)} $J \geq 3, ~ n > 2J$, $  \lim_{t\to t_B^-}   \exp({-\int_0^t R(s) ds}) = R_0 > 0 $, and 
\begin{equation*}
\xi^2 + \xi  - \frac{k \rho_0t_B^2 R_0}{n}=0.
\end{equation*}
\end{itemize}
Furthermore, these cases imply (\rom{1}), (a), (b) of (\rom{2}), and (\rom{3}) in Theorem \ref{mainthm+}, respectively.
\end{thm}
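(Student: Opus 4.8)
The plan is to assemble the pieces already established in Sections 3 and 4 and organize the case analysis driven by the sign of $\gamma+2$, where $\gamma$ is defined in \eqref{def_gamma}. First I would recall that Theorem \ref{prop15} gives $0\le q\le p$ with $p+q=\lambda_{n,0}-\lambda_{1,0}>0$, and that \eqref{u1un0} lets us impose the terminal data $(u_1u_n)(t_B)=0$, $(u_1u_n)'(t_B)=-(p-q)\le 0$. From this and \eqref{u1un} one derives the representations \eqref{def_lambda1}, \eqref{def_lambda2}, \eqref{def_lambda} for $\lambda_i$ and $\lambda$ with remainders $R_i$ satisfying $(t_B-t)R_i(t)\to 0$; substituting into \eqref{lambda} produces \eqref{eqn_R_j}, and integrating then multiplying by $(t_B-t)$, together with Lemma \ref{lemma:R}, yields the quadratic constraint \eqref{quadratic}:
\begin{equation*}
\xi^2+\xi=\frac{k\rho_0}{n}\lim_{t\to t_B^-}(t_B-t)\int_0^t e^{-\int_0^\tau\lambda(s)\,ds}\,d\tau
\end{equation*}
for $\xi=\xi_1$ and $\xi=\xi_n$. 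The crucial point is that the right-hand limit \emph{must exist} because the left side is a fixed constant; this is the lever that forces $\gamma$ into a narrow range.

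Next I would rewrite the integrand via \eqref{temp_21} as $t_B^{-\gamma}(t_B-\tau)^\gamma e^{-\int_0^\tau R(s)\,ds}$ and sandwich it using \eqref{lemma:R_3}: for every $0<\ep<1$ there is $M$ with $(t_B-\tau)^{\gamma+\ep}/M < (t_B-\tau)^\gamma e^{-\int_0^\tau R} < M(t_B-\tau)^{\gamma-\ep}$. Elementary integration of these power bounds shows: if $\gamma+2<0$ the quantity blows up (contradiction), and if $\gamma+2>0$ the quantity tends to $0$, forcing $\xi^2+\xi=0$, hence $\xi_1=-p/(p-q)=-1$, $\xi_n=q/(p-q)=0$, i.e. $q=0$; feeding $q=0$ into \eqref{def_gamma} and using $\gamma+2>0$ (so $-J+2>0$) pins down $J=1$. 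The borderline case $\gamma+2=0$ needs the extra identity \eqref{converge_R_2}, established by integration by parts plus the decay of $R$, which shows the relevant limit equals $\lim_{t\to t_B^-}e^{-\int_0^t R(s)\,ds}$; this is either $0$ (giving $\xi^2+\xi=0$ again, hence $J=2$ with $\int_0^t R\to\infty$) or a positive constant $R_0$ (giving $\xi^2+\xi=k\rho_0 t_B^2 R_0/n$ and, from $\gamma=-2$ in \eqref{def_gamma}, the Diophantine relation $p(J-2)=q(n-J-2)$, which under $p>q$ splits into $J=2,n=4$ and $J\ge 3,n>2J$). Collecting these four alternatives is exactly the statement of Theorem \ref{thm:case1}.

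Finally I would verify the closing clause — that these cases match (\rom{1}), (a), (b) of (\rom{2}), and (\rom{3}) of Theorem \ref{mainthm+}. In case \textbf{(1)} one reads off $\xi_1=-1$, so $(t_B-t)\lambda_1\to-1$; the bound $\int_0^t R_i\to\infty$ with $(t_B-t)R_i\to 0$ and \eqref{lemma:R_3} give $\lambda_i=\O(|\ln(t_B-t)|)$ for $i\ge 2$ and $\rho=\O(1/(t_B-t))$ via \eqref{rho_u} and \eqref{def_lambda}. Cases \textbf{(2-a)}, \textbf{(2-b)}, \textbf{(3)} are handled the same way: from the two roots of the displayed quadratic one identifies $\xi_1,\xi_n$ (using $\xi_1\xi_n$ equal to minus the constant term and $\xi_1+\xi_n=-1$), then translates $\xi_1=\lim(t_B-t)\lambda_1$, $\xi_n=\lim(t_B-t)\lambda_i$, and computes the $\rho$ rate from $\gamma$; in \textbf{(2-b)} with $n=4$ the constant term is $k\rho_0 t_B^2 R_0/4$ and one must further relate $R_0$ to the initial gap $A_0=(\lambda_{1,0}-\lambda_{3,0})(\lambda_{1,0}-\lambda_{4,0})$ and $k\rho_0$ to recover the explicit radical $\tfrac12\sqrt{A_0/(A_0-k\rho_0)}$ — this last matching, tying the abstract limit $R_0$ back to initial data, is the step I expect to be the main obstacle, and it is presumably carried out in the subsequent analysis of Section 4 (or deferred to the $n=4$ computations), so here I would simply assert the correspondence and refer forward.
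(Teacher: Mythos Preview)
Your derivation of the four-way split \textbf{(1)}, \textbf{(2-a)}, \textbf{(2-b)}, \textbf{(3)} follows the paper's argument essentially verbatim: the expansion of $u_1u_n$ about $t_B$, the quadratic constraint \eqref{quadratic}, the sandwich via \eqref{lemma:R_3}, the trichotomy on $\operatorname{sgn}(\gamma+2)$, and the integration-by-parts identity \eqref{converge_R_2} in the borderline case are all exactly what the paper does.

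The gap is in your treatment of the final clause, specifically case \textbf{(1)}. You claim that ``$\int_0^t R_i\to\infty$ with $(t_B-t)R_i\to 0$ and \eqref{lemma:R_3}'' suffice to get $\lambda_i=\O(|\ln(t_B-t)|)$ and $\rho=\O(1/(t_B-t))$. This does not work: \eqref{lemma:R_3} only gives $e^{-\int_0^t R}\le M(t_B-t)^{-\ep}$ for every $\ep>0$, so $\rho = \rho_0 t_B (t_B-t)^{-1}e^{-\int_0^t R}$ is a priori only $\O((t_B-t)^{-1-\ep})$, not $\O((t_B-t)^{-1})$, and likewise the integral feeding $\lambda_i$ in \eqref{claim1-2} is only $\O((t_B-t)^{-\ep})$, not logarithmic. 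Moreover, in case \textbf{(1)} the claim $\int_0^t R_i\to\infty$ is false: the paper proves (Lemma \ref{lem17}) that $\lambda_i=R_i\in L^1(0,t_B)$ for $i\ge 2$ and that $\int_0^{t_B}R_1$ is finite. That lemma is the missing ingredient: it bootstraps (using $\ep=1/3$ in \eqref{lemma:R_3} to first get $(t_B-t)^{1/2}\lambda_i\to0$, hence $\lambda_i\in L^1$, then uses \eqref{differenc_lambda} to pin down $\int R_1$) and only \emph{then} does $e^{-\int R}$ have a genuine positive finite limit, which is what delivers the exact $\O$-rates. Your sketch also omits, in case \textbf{(2-a)}, the derivation of $n\ge 5$ (the paper gets it from summing \eqref{4.7} to obtain \eqref{R_1+lam_j} and combining $\int R\to+\infty$ with $\int R_1\to-\infty$), which is part of what ``implies (a) of (\rom{2})'' means. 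Your deferral in \textbf{(2-b)} is fair; the paper does carry out that matching immediately after the theorem via \eqref{4.12}--\eqref{4.13}.
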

  
\sk The remainder of the proof of Theorem \ref{thm:case1} is demonstrating the relations between the cases in Theorem \ref{thm:case1} and in Theorem \ref{mainthm+}.

\sk Assuming \textbf{(1)}, we immediately have the following representation of $\lambda_i$: 
\begin{eqnarray*}
\lambda_i (t) &=& 
\begin{cases}
\dis\frac{-1}{t_B-t} + R_1(t), \quad &i=1,\\
 R_i(t),\quad  &2 \leq i \leq  n,
 \end{cases}\\
\lambda (t) &=& \frac{-1}{t_B-t} + R(t), \quad R(t) = \sum_{i=1}^n R_i(t).
\end{eqnarray*}
Although $\dis \lim_{t\to t_B^-} (t_B -t) R_i(t) =0$ for $i=1,2,\cdots, n$, we require the integrability of $R_i$ to obtain (\rom{1}) in Theorem \ref{mainthm+}. Indeed, this is the case.   
\begin{lemma}\label{lem17}
Assuming \textbf{(1)} in Theorem \ref{thm:case1},  
\begin{equation*}
\lambda_i  \in L^1(0,t_B), \quad i=2,\cdots,n,
\end{equation*}
and 
\begin{equation*}
 \int_0^{t_B} R_1(s)ds = C.
\end{equation*}
\end{lemma}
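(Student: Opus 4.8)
The plan is to reduce the entire statement to ruling out alternative \eqref{t_case2} of Proposition \ref{prop:case12}. Under hypothesis \textbf{(1)} of Theorem \ref{thm:case1} we have $\xi_1=-1$ and $\xi_n=0$, hence $q=0$ and $p=\lambda_{n,0}-\lambda_{1,0}>0$; moreover, by \eqref{u1un0} and Theorem \ref{prop15}, $(u_1u_n)(t_B)=0$ and $(u_1u_n)'(t_B)=-p$, so $(u_1u_n)(t)=p(t_B-t)+o(t_B-t)$ as $t\to t_B^-$. By Proposition \ref{prop:case12} either \eqref{t_case1} or \eqref{t_case2} holds, and since the two alternatives disagree on the value of $u_1'(t_B)$, exactly one of them holds for all $J<j\le n$ simultaneously. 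If \eqref{t_case1} holds, l'H\^{o}pital's rule (using $u_1(t_B)=0$, $u_1'(t_B)=-\alpha_1$) gives $u_1(t)/(t_B-t)\to\alpha_1>0$, so
\[
\int_0^t R_1(s)\,ds=\ln u_1(t)-\ln(t_B-t)+\ln t_B\longrightarrow \ln(t_B\alpha_1)=:C,
\]
while for $2\le i\le n$ we have $\int_0^t\lambda_i(s)\,ds=\ln u_i(t)\to\ln\beta_i<\infty$, and since $\lambda_i=u_i'/u_i>0$ near $t_B$ and $\lambda_i$ is bounded on every $[0,t_B-\delta]$, this gives $\lambda_i\in L^1(0,t_B)$. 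So the whole lemma follows once \eqref{t_case2} is excluded.

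To exclude \eqref{t_case2}, I would argue by contradiction. Suppose $u_1'(t_B)=0$ and $u_j(t_B)=\infty$ for $1<j\le n$. Since $u_1(t_B)=u_1'(t_B)=0$, the function $v(t):=u_1(t)/(t_B-t)$ tends to $0$ as $t\to t_B^-$, and $v>0$ on $[0,t_B)$. Fix $t_0\in(0,t_B)$ with $\omega(t_B-t_0)<\pi$ such that $v(t)\le1$ and $(u_1u_n)(t)\ge\tfrac12 p(t_B-t)$ for $t\in(t_0,t_B)$. By \eqref{temp61} (and trivially for $j=n$), $u_j=a_ju_1+b_ju_n$ with $a_j\ge0$ and $b_j=(\lambda_{j,0}-\lambda_{1,0})/(\lambda_{n,0}-\lambda_{1,0})>0$ for $2\le j\le n$, so $u_j(t)\ge b_ju_n(t)\ge\tfrac12 b_j p\,(t_B-t)/u_1(t)$ on $(t_0,t_B)$. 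Consequently
\[
g_1(s)=\frac{k}{n}\rho_0\prod_{i=2}^n\frac{1}{u_i(s)}\le \widetilde{K}\,\frac{u_1(s)^{n-1}}{(t_B-s)^{n-1}}=\widetilde{K}\,v(s)^{n-1},\qquad s\in(t_0,t_B),
\]
for a constant $\widetilde{K}>0$ depending only on $k,n,\rho_0$ and the initial data.

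Next I would feed this bound into the integral representation \eqref{general_u2}, which with $\alpha_1=0$ reads $u_1(t)=\tfrac1\omega\int_t^{t_B}g_1(s)\sin\omega(s-t)\,ds$. Using $0\le\sin\omega(s-t)\le\omega(s-t)\le\omega(t_B-t)$ for $t_0<t\le s<t_B$, we get, for $t\in(t_0,t_B)$,
\[
u_1(t)\le \widetilde{K}\int_t^{t_B}v(s)^{n-1}(s-t)\,ds\le \widetilde{K}\,(t_B-t)\int_t^{t_B}v(s)^{n-1}\,ds,
\]
hence, dividing by $t_B-t$ and using $v^{n-1}\le v$ on $(t_0,t_B)$, $v(t)\le\widetilde{K}\,W(t)$ with $W(t):=\int_t^{t_B}v(s)\,ds>0$. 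Since $W'(t)=-v(t)\ge-\widetilde{K}\,W(t)$, the function $e^{\widetilde{K}t}W(t)$ is nondecreasing on $(t_0,t_B)$, so for any fixed $t_1\in(t_0,t_B)$ we get $W(t)\ge W(t_1)e^{-\widetilde{K}(t_B-t_1)}>0$ for all $t\in(t_1,t_B)$. But $v$ is bounded near $t_B$ (it tends to $0$), so $W(t)\le(t_B-t)\sup_{(t_0,t_B)}v\to0$ as $t\to t_B^-$ — a contradiction. Therefore \eqref{t_case2} cannot occur, \eqref{t_case1} holds, and the lemma follows as in the first paragraph.

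The main obstacle is the middle step: converting the hypothesis of \eqref{t_case2} into the pointwise control $g_1\le\widetilde{K}\,v^{n-1}$. This rests on pairing the convex-combination formula \eqref{temp61} with the linear vanishing $(u_1u_n)(t)\sim p(t_B-t)$ to bound each $u_j$ ($j\ge2$) from below by a multiple of $(t_B-t)/u_1$; the exponent $n-1\ge1$ is exactly what later makes $v^{n-1}\le v$ usable. Once this is in place the Gronwall argument in the last paragraph is elementary, and no other delicate estimate is needed.
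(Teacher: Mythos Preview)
Your proof is correct, but it follows a genuinely different route from the paper's.

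The paper argues directly with the quantities $R_i$ introduced in \eqref{def_lambda1}--\eqref{def_lambda}: for $i\ge2$ it integrates \eqref{lambda}, feeds in the rough bound \eqref{lemma:R_3} with $\ep=1/3$, and obtains $(t_B-t)^{1/2}\lambda_i(t)\to0$, whence $\lambda_i\in L^1(0,t_B)$. Then it uses \eqref{differenc_lambda} in the integrated form
\[
(t_B-t)(\lambda_1-\lambda_n)=(\lambda_{1,0}-\lambda_{n,0})\,t_B\,e^{-\int_0^t (R_1+\lambda_n)},
\]
so that the left side converging to $-1$ forces $\int_0^{t_B}(R_1+\lambda_n)$ to converge, and $\lambda_n\in L^1$ gives the convergence of $\int_0^{t_B}R_1$.

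Your strategy is instead structural: you reduce everything to showing that hypothesis~\textbf{(1)} forces the alternative \eqref{t_case1} of Proposition~\ref{prop:case12}, after which both conclusions are immediate from $u_1(t)/(t_B-t)\to\alpha_1>0$ and $u_j\to\beta_j>0$. The exclusion of \eqref{t_case2} via the pointwise bound $g_1\le\widetilde K\,v^{n-1}$ (obtained by combining \eqref{temp61} with $(u_1u_n)(t)\ge\tfrac{p}{2}(t_B-t)$) and the ensuing Gronwall-type inequality for $W(t)=\int_t^{t_B}v$ is clean and correct. What your approach buys is the equivalence ``case~\textbf{(1)} $\Leftrightarrow$ \eqref{t_case1}'' up front and without any quantitative rate computation; the paper only records this equivalence later, in Section~5, as a consequence of the full case analysis. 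Conversely, the paper's approach yields the extra quantitative information $(t_B-t)^{1/2}\lambda_i(t)\to0$, which it then bootstraps to $\lambda_i=\O(|\ln(t_B-t)|)$ in deriving (\rom{1}) of Theorem~\ref{mainthm+}.
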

\begin{proof}
Let $i=2,3,\cdots, n$. Then, we deduce that 
\begin{align}
\lambda_i(t) &= -\int_{0}^t \lambda^2_i(s)ds +\frac{k\rho_0 t_B}{n} \int_{0}^t \frac{1}{t_B-s} e^{-\int_0^s R(\tau) d\tau}ds  -\frac{k c_b}{n}t + \lambda_{i,0}\label{claim1-2}\\
&\leq  \int_{0}^t \frac{1}{t_B-s} e^{-\int_0^s R(\tau) d\tau} ds  + \lambda_{i,0}.\label{claim1-21}
\end{align}
Multiplying by $(t_B-t)^{1/2}$ yields  
\begin{equation}\label{4.61} 
(t_B-t)^{1/2} \lambda_i(t) \leq (t_B-t)^{1/2} \int_{0}^t \frac{1}{t_B-s} e^{-\int_0^s R(\tau) d\tau} ds  + (t_B-t)^{1/2}\lambda_{i,0}.
\end{equation}
Now, we take $\ep = 1/3$ in \eqref{lemma:R_3} to obtain  
\begin{equation*}
e^{-\int_0^s R(\tau) d\tau} \leq \frac{M}{(t_B-s)^{1/3}}.
\end{equation*} 
Then, we observe that the right-hand side of \eqref{4.61} converges to $0$. Thus, 
 \begin{equation*}
\lim_{t\to t_B^-} (t_B-t)^{1/2} \lambda_i(t) =0, 
 \end{equation*}
because $\lambda_i > 0 $ near $t_B$. This implies that
\begin{equation}\label{temp41}
\lambda_i  \in L^1(0,t_B), \quad i=2,3,\cdots,n.
\end{equation}
To demonstrate the convergence of $\int_0^{t} R_1(s)ds$, 
we deduce from \eqref{differenc_lambda} that 
\begin{align}\label{4.11}
(t_B -t) (\lambda_1(t) - \lambda_n(t)) &= (\lambda_{1,0} -\lambda_{n,0}) t_B e^{-\int_0^t R_1(s) + \lambda_n(s) ds }. 
\end{align}
Because the left-hand side converges to $-1$ assuming \textbf{(1)}, there exists a constant $C_1$ such that   
\begin{equation} \label{r_1+lambda_n}
\int_0^{t_B} R_1(s) + \lambda_n(s) ds  = C_1,
\end{equation}
and thus \eqref{temp41}, $\lambda_n\in L^1(0,t_B)$, yields 
\begin{align*}
\int_0^{t_B} R_1(s) ds   = C. 
\end{align*} 
\end{proof}
\sk Lemma \ref{lem17} enhances the estimate \eqref{lemma:R_3} as
\begin{equation*}
0< \lim_{t\to t_B^-} e^{-\int_0^{t} R(s) ds} < \infty.
\end{equation*}
Immediately, we obtain   
\begin{equation*}
\rho(t) = \O \big( \frac{1}{t_B -t}\big) \quad \text{as~} t \to t_B^-.
\end{equation*}
Furthermore, it follows from \eqref{claim1-21} that $\lambda_i$ is at most $\O(\ln (t_B -t))$ for $i=2,3,\cdots, n$. Then, $\lambda_i \in L^2(0,t_B)$, and applying \eqref{claim1-2} again yields 
\begin{equation*}
\lambda_i (t) = \O (\ln(t_B -t)) \quad i=2,3,\cdots,n.
\end{equation*} 
This shows that \textbf{(1)} implies (\rom{1}) in Theorem \ref{mainthm+}. 

\sk In the case of \textbf{(2-a)} in Theorem \ref{thm:case1},  
\begin{eqnarray*}
\lambda_i(t) &=& 
\begin{cases}
\dis\frac{-1}{t_B-t} + R_i(t), \quad &i=1, 2,  \\
 R_i(t),\quad  & 3 \leq i \leq n, 
 \end{cases}\\
  \lambda (t) &=& \frac{-2}{t_B-t} + R(t), \quad R(t) = \sum_{i=1}^n R_i(t), \quad R_1(t) = R_2(t).
\end{eqnarray*}
Now, let $3\leq i \leq n$. Then, similar to the derivation of \eqref{r_1+lambda_n}, we have that 
\begin{equation}\label{4.7}
 \int_0^{t_B} R_1(s) + \lambda_i(s) ds  = C_i.
\end{equation}
If $\int_0^{t} \lambda_i(s) ds $ is assumed to converge, then 
$\int_0^t R_1(s) ds$, and thus $\int_0^{t} R(s) ds$ converges, which does not belong to \textbf{(2-a)}. Taking into account $\lambda_i \to \infty$, 
we must have 
\begin{align*}
\lim_{t\to t_B^-} \int_0^t \lambda_i (s) ds = \infty, \quad i=3,4, \cdots, n.
\end{align*}
Then, \eqref{4.7} yields 
\begin{equation}\label{4.8}
\lim_{t\to t_B^-} \int_0^t R_1(s)  ds = - \infty.\\
\end{equation}
Summing \eqref{4.7} over $i =3, 4, \cdots, n$ yields that for some constant $C$,  
\begin{equation}\label{R_1+lam_j}
 \int_0^{t_B} R(s) + (n-4)R_1(s) ds  = C.
\end{equation}
Because $ \lim_{t\to t_B^-} \int_0^{t} R(s) ds = \infty $ in \textbf{(2-a)},  we have that 
\begin{equation*}
n \geq 5,
\end{equation*}
and it follows that  
\begin{equation*}
\rho(t) = o \left(\frac{1}{(t_B -t)^2}\right) \text{~as~} t\to t_B^-.
\end{equation*}
Hence, we conclude that \textbf{(2-a)} in Theorem \ref{thm:case1} implies (a) of (\rom{2}) in Theorem \ref{mainthm+}.

\sk Now, we consider the case of \textbf{(2-b)}. Because the solutions to the characteristic equation \eqref{quadratic} are $\xi_1= -p/(p-q)$ and $\xi_4=q/({p-q})$, it follows that 
\begin{equation}\label{4.12}
\frac{pq}{(p-q)^2} = \frac{k\rho_0 t^2_B R_0}{4}
\end{equation}
and 
\begin{eqnarray*}
\lambda_i(t) &=& 
\begin{cases}
\dis\frac{\xi_1}{t_B-t} + R_i(t), \quad & i = 1, 2, \\
\dis\frac{\xi_4}{t_B-t} + R_i(t), \quad  & i =3, 4, 
 \end{cases}\\
\lambda (t) &=& \frac{-2}{t_B-t} + R(t), \quad R(t) = \sum_{i=1}^4 R_i(t), \quad R_1(t) = R_2(t). 
\end{eqnarray*}
Note that the representation of $\lambda$ follows from $\xi_1+ \xi_4 = -1$, and the representation of $\lambda_3$ (i.e., $\xi_3 = \xi_4$)  follows from Lemma \ref{lem_3}. Because $\lim_{t\to t_B^-}   \exp({-\int_0^t R(s) ds}) = R_0 > 0$, we immediately we obtain that
\begin{equation*}
\rho(t) = \O \Big(\frac{1}{(t_B-t)^2}\Big)\quad \text{as~} t\to t_B^-. 
\end{equation*}
Similar to \eqref{4.11}, we deduce that 
\begin{align*}
(t_B-t)(\lambda_1(t) - \lambda_i(t)) = (\lambda_{1,0}-\lambda_{i,0}) t_B e^{-\int_0^t R_1(s) + R_i(s) ds},\quad i=3,4.
\end{align*}    
Sending $t\to t_B^-$ and multiplying the two equations for $i=3,4$ yield that 
\begin{equation}\label{4.13}
\frac{(p+q)^2}{(p-q)^2} = A_0 t_B^2 R_0.  
\end{equation} 
Recall that
\begin{equation*}
A_0:= (\lambda_{1,0}-\lambda_{3,0})(\lambda_{1,0}-\lambda_{4,0}).
\end{equation*}
Then, we combine \eqref{4.12} and \eqref{4.13} to obtain 
\begin{equation*}
(p-q)^2 = 4\Big(\frac{A_0}{k \rho_0}-1\Big)pq.
\end{equation*}
Thus, it must hold that 
\begin{equation}\label{case_b-2}
{A_0}>{k \rho_0}. 
\end{equation}
Furthermore, we obtain representations of $\xi_1$ and $\xi_4$ in terms of the given parameters. Indeed, we have 
\begin{align*}
\xi_1 &= -\frac{1}{2} -\frac{1}{2}\sqrt{\frac{A_0}{A_0 - k \rho_0}}, \\
\xi_3 &=\xi_4= -\frac{1}{2} + \frac{1}{2}\sqrt{\frac{A_0}{A_0 - k \rho_0}},
\end{align*} 
as described in (b) of (\rom{2}) in Theorem \ref{mainthm+}. 

\sk In the case of \textbf{(3)} in Theorem \ref{thm:case1}, we have that
\begin{align*}
\lim_{t\to t_B^-} (t_B-t)\lambda_1 (t) &= \frac{-p}{p-q},\\
\lim_{t\to t_B^-} (t_B-t)\lambda_i (t) &= \frac{q}{p-q}, \quad J+1 \leq i \leq n.
\end{align*}
The behavior of $\rho$, 
\begin{align*}
\rho (t) = \O\Big(\frac{1}{(t_B-t)^{2}}\Big) \quad \text{as~} t\to t_B^-, 
\end{align*}
follows from  $(-pJ + q(n-J))/(p-q) = -2$ and 
$ \exp ({-\int_0^{t_B} R(s) ds }) = R_0 $. 
This shows that \textbf{(3)} implies (\rom{3}) in Theorem \ref{mainthm+}.

\section{The case $\boldsymbol{p=q}$}
\sk In this section, we investigate the blow-up solution behaviors when
\begin{equation}\label{p=q}
p=q \left( = \frac{\lambda_{n,0}- \lambda_{1,0}}{2}\right).
\end{equation}
As previously noted, understanding the behaviors of $\lambda_i'$ near $t_B$ is essential. One technique to achieve this is to compare the behaviors of $\lambda_i^2$ and $\rho$ from \eqref{lambda}. However, the main difficulty lies in the fact that the condition \eqref{p=q} implies that the leading singular terms of $\int \lambda_i^2$ and $k/n \int \rho$ are the same. Indeed, integrating \eqref{lambda} yields
\begin{align*}
\lambda_1(t) - \lambda_{1,0} = - \int_0^t \lambda_1^2(s) ds + \frac{k}{n} \int_0^t \rho(s) -\omega^2 ds \to - \infty,\\
 \lambda_n(t) - \lambda_{n,0} = - \int_0^t \lambda_n^2(s) ds + \frac{k}{n} \int_0^t \rho(s) -\omega^2 ds \to +\infty,
\end{align*}
implying  that in a neighborhood of $t_B$,
\begin{equation}\label{temp51}
\int_0^t \lambda^2_n(s) ds < \int_0^t \rho(s) ds < \int_0^t \lambda^2_1(s) ds. 
\end{equation}
However, the condition \eqref{p=q} yields  
\begin{equation}\label{5.8}
\lim_{t\to t_B^-} \frac{\lambda_1(t)}{\lambda_n(t)} = \lim_{t\to t_B^-}\frac{u'_1(t) u_n(t)}{u_1(t) u_n'(t)} = -1,
\end{equation}
which indicates that the leading singular terms of all integrals in \eqref{temp51} are the same. For this reason, we study the case of \eqref{p=q} by examining the second singular terms of $\int \lambda_i^2$ and $\int \rho$. We remark that one cannot compare $\lambda_i^2$ and $k \rho/n$ directly as Proposition \ref{lem:rho} demonstrates the behavior of $\int \rho$ rather than $\rho$. Furthermore, we notice that the case \eqref{p=q} occurs only in the case of \eqref{t_case2} in Proposition \ref{prop:case12}. Indeed, \eqref{t_case1} implies that 
\begin{equation*}
\lim_{t\to t_B}\int_{0}^t \lambda_n(s) ds = \lim_{t\to t_B}\int_{0}^t \frac{u_n'(s)}{u_n(s)} ds = \ln \beta_n  < \infty.  
\end{equation*}
Assuming \eqref{p=q}, we have observed \eqref{5.8}, which implies that $\lambda_1 \in L^1(0,t_B)$. Thus, $\lambda_i\in L^1(0,t_B)$ for all $i$, and thus $\rho$ is bounded. This contracts Proposition \ref{lem:rho}. More precisely,  $\lambda_i \in L^1(0,t_B) ~(i>J)$, which is a necessary and sufficient condition for the convergence of $u_i ~(i>J)$ or \eqref{t_case1} in Proposition \ref{prop:case12}, only holds in (\rom{1}) in Theorem \ref{mainthm+}. That is, (\rom{1}) is equivalent to \eqref{t_case1}, and all other cases in Theorem \ref{mainthm+} are associated with \eqref{t_case2} in Proposition \ref{prop:case12}.  

\sk We define $\eta$ as
\begin{equation}\label{claim2-9}
\frac{u_1'}{u_1} + \frac{u_n'}{u_n} = -2 \eta. 
\end{equation}
Because $\lim_{t\to t_B^-}(u_1u_n)(t) =0$ in Corollary \ref{cor_u1un} and $\lim_{t\to t_B^-}(u_1u_n)'(t)=0$ from the condition \eqref{p=q}, $\eta$ satisfies 
\begin{align}
\lim_{t\to t_B^-} \eta(t)(u_1u_n)(t) = -\lim_{t\to t_B^-} \frac{(u_1u_n)'(t)}{2} =0, \label{claim2-1}\\
\lim_{t\to t_B^-} \int_0^t \eta(s) ds = -\lim_{t\to t_B^-} \frac{\ln ((u_1u_n)(t))}{2} = \infty. \label{claim2-10}
\end{align}  
We remark that the behavior of $\eta$ near $t_B$ is not clear at this point, owing to the highly oscillating type \eqref{def2}.  
  
\sk Recall \eqref{abel_Thm} or that for all $t\in (0,t_B)$,
\begin{equation}\label{claim2-8}
\frac{u_1'}{u_1} - \frac{u_n'}{u_n} = -2 \frac{p}{u_1u_n}.
\end{equation}
Then, together with \eqref{claim2-9}, we have that  
\begin{align}
\lambda_1 &= \frac{u_1'}{u_1} = -\frac{p}{u_1u_n} - \eta,
\label{claim2-4}\\
\lambda_n &= \frac{u_n'}{u_n} = \frac{p}{u_1u_n} -\eta.
\label{claim2-5}
\end{align}
Substituting these representations into the main equation \eqref{lambda} yields
\begin{align}
\lambda_1' &= - \lambda_1^2 + \frac{k}{n}\rho -\omega^2 =   - \left[\left( \frac{p}{u_1u_n}\right)^2 + 2\frac{p\eta}{u_1u_n} + \eta^2 \right] + \frac{k}{n} \rho - \omega^2, \label{5.3}\\ 
\lambda_n' &= - \lambda_1^2 + \frac{k}{n}\rho -\omega^2 = - \left[\left( \frac{p}{u_1u_n}\right)^2 - 2\frac{p\eta}{u_1u_n} + \eta^2  \right]+ \frac{k}{n} \rho - \omega^2.\label{5.4}
\end{align}
Owing to the property of $\eta$ in \eqref{claim2-1}, we obtain
\begin{align*}
&\dis \lim_{t\to t_B^-}\frac{\dis\int_0^t \frac{p\eta(s)}{(u_1u_n)(s)} ds} {\dis\int_0^t \left( \frac{p}{(u_1u_n)(s)}\right)^2 ds}=0.
\end{align*}
Thus, the leading singular term of $\int_0^t \lambda_i^2(s) ds$ ($i=1,n$) is 
\begin{equation*}
\int_0^t \left( \frac{p}{(u_1u_n)(s)}\right)^2 ds,  
\end{equation*}
and this should be the same as the leading singular term of $k/n\int_0^t \rho(s) ds $,  otherwise the integrations of \eqref{5.3} and \eqref{5.4} yield that $\lambda_1\lambda_n >0$ near $t_B$. Now, we define $\delta$ as 
\begin{equation}\label{claim2-3}
 \int_0^t \left( \frac{p}{(u_1u_n)(s)}\right)^2 ds + \delta(t) := \int_0^t \frac{k}{n} \rho(s) -\omega^2 ds, 
\end{equation}
satisfying 
\begin{equation}\label{claim2-2}
\dis \lim_{t\to t_B^-} \frac{\delta(t)}{ \dis \int_0^t \left( \frac{p}{(u_1u_n)(s)}\right)^2 ds } = 0, \quad \delta(0) =0.
\end{equation}
It follows from \eqref{5.3} and \eqref{5.4} that
\begin{align}
\lambda_1(t) -\lambda_{1,0} &= \int_0^t \Big(-2\frac{p\eta(s)}{(u_1u_n)(s)} - \eta^2(s)   \Big) ds  + \delta(t), \label{claim2-6}\\
\lambda_n(t) -\lambda_{n,0} &= \int_0^t \Big(2\frac{p\eta(s)}{(u_1u_n)(s)} - \eta^2(s)  \Big) ds  + \delta(t). \label{claim2-7}
\end{align}

\sk Now, we present a technical lemma. In Corollary \ref{cor_u1un}, we showed that $u_1u_n \to 0$ as $t$ tends to $t_B$. Thus, one may expect that for some $\theta>1$, $u_1u_n^\theta $ converges to a nonzero constant with assuming \eqref{t_case2}. However, this is not the case,  at least when $p=q$.  
\begin{lemma}\label{lemma:claim2_2}
Assume the hypothesis of Theorem \ref{thm:case1}, and suppose that 
\begin{equation*}
p=q.
\end{equation*}
Then, for any $ \theta \leq 1$, 
\begin{equation*}
\lim_{t\to t_B^-} (u_1u_n^{\theta})(t) =0.
\end{equation*} 
For $\theta >1$, if the convergence of $u_1 u_n^{\theta}$ is assumed, then 
\begin{equation*}
\lim_{t\to t_B^-} (u_1u_n^{\theta})(t) =0.
\end{equation*}
\end{lemma}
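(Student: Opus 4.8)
The statement is naturally split into the case $\theta\le 1$ (which must hold) and the case $\theta>1$ (where convergence is assumed and must then be to $0$). For the first part, the plan is to go back to the identity $u_1(t_B)=0$ from \eqref{u_bdy} together with $\lim_{t\to t_B^-}u_n(t)=\infty$ from \eqref{t_case2} (recall that, as explained right before the lemma, $p=q$ forces us into case \eqref{t_case2}). For $\theta\le 0$ this is immediate: $u_1\to 0$ and $u_n^\theta$ is bounded. For $0<\theta\le 1$, I would write $u_1u_n^\theta=(u_1u_n)^\theta\,u_1^{1-\theta}$, use $u_1u_n\to 0$ from Corollary \ref{cor_u1un} and $u_1\to 0$ from \eqref{u_bdy}, and conclude since both exponents $\theta$ and $1-\theta$ are nonnegative. (The borderline $\theta=1$ is Corollary \ref{cor_u1un} itself.)

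For $\theta>1$ the plan is a contradiction argument built on \eqref{claim2-6}, \eqref{claim2-7}, the representations \eqref{claim2-4}--\eqref{claim2-5}, and the decay of $\delta$ in \eqref{claim2-2}. Suppose $u_1u_n^\theta\to\ell$ with $\ell\neq 0$; by what is already proved for $\theta\le 1$ we may take $\theta>1$, and then $\ell$ is finite (if $u_1u_n^\theta\to\infty$ one can pass to a slightly smaller $\theta'>1$ with $u_1u_n^{\theta'}$ still bounded away from $0$, reducing to the finite case, or simply observe $u_1u_n^\theta=(u_1u_n)u_n^{\theta-1}$ with the first factor bounded and... — I would instead argue directly that $\ell$ cannot be $+\infty$ using $u_1u_n\to 0$). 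Writing $u_1u_n^\theta\to\ell>0$ gives, via $\lambda_1=u_1'/u_1$ and $\lambda_n=u_n'/u_n$, an asymptotic relation between $\int_0^t\lambda_1$ and $\int_0^t\lambda_n$: indeed $\theta\int_0^t\lambda_n+\int_0^t\lambda_1=\ln(u_1u_n^\theta)\to\ln\ell$, so $\int_0^t\lambda_1=-\theta\int_0^t\lambda_n+O(1)$. Since $\int_0^t\lambda_n\to+\infty$ (because $u_n\to\infty$) this says $\int_0^t\lambda_1\sim-\theta\int_0^t\lambda_n$, while the density identity $\rho=\rho_0\prod u_i^{-1}$ together with Proposition \ref{lem:rho} says $\int_0^t\lambda=\sum_i\int_0^t\lambda_i\to-\infty$; combining with Lemma \ref{lem_3} ($\lambda_i=\lambda_1$ for $i\le J$, $\lambda_i/\lambda_n\to 1$ for $i>J$) I would extract the constraint linking $\theta$, $J$ and $n$, and then feed it into \eqref{claim2-3}--\eqref{claim2-2} to see that the ``second singular term'' balance cannot be met unless $\ell=0$.

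More concretely, the engine is \eqref{5.8}: $p=q$ gives $\lambda_1/\lambda_n\to-1$, hence $\lambda=\sum\lambda_i\to$ has leading part $(J\cdot(-1)+(n-J)\cdot 1)\lambda_n=(n-2J)\lambda_n$ to leading order, so $\int_0^t\lambda\sim(n-2J)\int_0^t\lambda_n$. But $\int_0^t\lambda=-\ln(\rho/\rho_0)$, and from \eqref{claim2-3} the leading singular term of $\int_0^t\rho$ matches $\int_0^t(p/(u_1u_n))^2$; if $u_1u_n^\theta\to\ell\neq 0$ one can convert $u_1u_n=(u_1u_n^\theta)^{1/\theta}\cdot u_n^{\,(\theta-1)/\theta}\cdot(\text{corrections})$ — this is exactly where the assumed convergence with $\theta>1$ becomes usable — and track the blow-up rate of $1/(u_1u_n)$ against that of $\rho$, $u_n$, to produce an over-determined system on the exponents that is inconsistent unless $\ell=0$.

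\textbf{Main obstacle.} The hard part is the $\theta>1$ case: the behavior of $\eta$ (equivalently of $(u_1u_n)'$) near $t_B$ is genuinely not controlled pointwise — only the integral statements \eqref{claim2-1}, \eqref{claim2-10} are available, and $\eta$ may oscillate — so one cannot simply differentiate asymptotic relations. The argument must stay at the level of integrated quantities ($\int_0^t\lambda_i$, $\int_0^t\lambda_i^2$, $\int_0^t\rho$, $\delta$), exploiting \eqref{claim2-6}--\eqref{claim2-7} and the smallness \eqref{claim2-2} of $\delta$, and squeeze out the exponent identity from the requirement that $\lambda_1\lambda_n<0$ near $t_B$ (which is what forced the definition of $\delta$ in the first place). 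I expect the cleanest route is: assume $\ell\neq0$, deduce $\int_0^t\lambda_1+\theta\int_0^t\lambda_n$ is bounded, combine with $\int_0^t\lambda_1+\int_0^t\lambda_n=2\int_0^t(-\eta)\to-\infty$ wait — that gives $(\theta-1)\int_0^t\lambda_n$ bounded, contradicting $\int_0^t\lambda_n\to+\infty$ for $\theta>1$. That last observation, if it holds up, collapses the whole $\theta>1$ case to a one-line contradiction: $\ln(u_1u_n^\theta)=\int_0^t(\lambda_1+\theta\lambda_n)=\int_0^t(\lambda_1+\lambda_n)+(\theta-1)\int_0^t\lambda_n=-2\int_0^t\eta+(\theta-1)\int_0^t\lambda_n$, and since $\int_0^t\lambda_n\to+\infty$ while $\int_0^t\eta\to+\infty$ too (from \eqref{claim2-10}), one must check these cannot cancel to a finite nonzero limit — which is where I would need the finer relation $\eta\approx\lambda_n$ up to the ``$p/(u_1u_n)$'' term, i.e. $-2\eta=\lambda_1+\lambda_n$ and $\lambda_n-\lambda_1=2p/(u_1u_n)\to+\infty$, giving $\eta/\lambda_n\to$ the right ratio. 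So the real work is reconciling these two divergences, and that is the step I would spend the most care on.
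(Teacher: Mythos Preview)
Your treatment of $\theta\le 1$ is correct and essentially the paper's argument (the paper writes $u_1u_n^\theta=(u_1u_n)\,u_n^{\theta-1}$ instead of your $(u_1u_n)^\theta u_1^{1-\theta}$, but both work once $u_1u_n\to 0$ and $u_n\to\infty$ are in hand).

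For $\theta>1$, however, your proposal has a genuine gap. The identity you arrive at,
\[
\ln(u_1u_n^\theta)\;=\;-2\int_0^t\eta\;+\;(\theta-1)\int_0^t\lambda_n,
\]
is just the definition $\ln(u_1u_n^\theta)=\ln(u_1u_n)+(\theta-1)\ln u_n$ rewritten; it carries no information beyond the hypothesis $u_1u_n^\theta\to\ell$. So the ``reconciling these two divergences'' step you flag is not a technical detail to be filled in --- it is the whole content of the lemma, and none of the integrated relations \eqref{claim2-1}, \eqref{claim2-10}, \eqref{claim2-2} will force the cancellation to be exact. In particular, the obvious try $(u_1u_n^\theta)'=u_n^{\theta-1}(u_1'u_n+\theta u_1u_n')\to+\infty$ does \emph{not} contradict convergence to a finite limit (think $-\sqrt{t_B-t}$), so a sharper device is needed.

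The paper's proof supplies that device. From Abel's identity \eqref{abel_Thm} one gets $(u_1/u_n)'=-2p/u_n^{2}$, hence
\[
u_1u_n^\theta \;=\; u_n^{\theta+1}\Bigl(1-2p\int_0^t u_n^{-2}\Bigr).
\]
If the left side converges to $C$ while $u_n^{\theta+1}\to\infty$, the bracket must tend to $0$; this recasts $u_1u_n^\theta$ as a $0/0$ quotient to which l'H\^opital applies, yielding
\[
C\;=\;\lim_{t\to t_B^-}\frac{2p}{\theta+1}\cdot\frac{u_n^\theta}{u_n'}\;=\;\frac{2p}{\theta+1}\cdot\frac{C}{p}\;=\;\frac{2C}{\theta+1},
\]
where the middle step inserts the known limit $u_1u_n'\to q=p$. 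Thus $(\theta-1)C=0$, forcing $C=0$. The point you are missing is precisely this self-referential l'H\^opital computation; without it (or an equivalent), the integrated-asymptotics route you sketch does not close.
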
   
\begin{proof}
Recall that $p=q$ only occurs in \eqref{t_case2}, i.e., $u_n \to \infty$.  Then, it clearly holds that $\lim_{t\to t_B^-} (u_1u_n^{\theta})(t) =0$ for $\theta \leq 1$, because $\lim_{t\to t_B^-}(u_1u_n)(t) =0$ in Corollary \ref{cor_u1un}.
 
\sk Let $\theta >1$ and      
\begin{equation*}
\lim_{t\to t_B^-} (u_1u_n^{\theta})(t) = C.   
\end{equation*} 
We deduce from \eqref{abel_Thm} that 
\begin{align*}
\left(\frac{u_i(t)}{u_j(t)}\right)' &= \frac{\lambda_{i,0}-\lambda_{j,0}}{u_j^2(t)},\\
u_i(t) &= u_j(t) + u_j(t) (\lambda_{i,0}-\lambda_{j,0})\int_0^t \frac{1}{u_j^2(s)} ds.
\end{align*}
Multiplying by $u_n^\theta$ in the equation for $i=1, j =n$ yields  
\begin{equation*}
(u_1 u_n^\theta)(t) = u_n^{\theta+1}(t) \left(1+  (\lambda_{1,0}-\lambda_{n,0})\int_0^t \frac{1}{u_n^2(s)} ds\right).
\end{equation*}  
Then, $ 1+  (\lambda_{10}-\lambda_{n,0})\int_0^{t_B} 1/{u_n^2(s)} ds =0$, because the left-hand side converges to $C$, and $u_n^{\theta+1} \to \infty$.  Apply l'H\^{o}pital's rule to the right-hand side, to yield   
\begin{align*}
\dis\lim_{t\to t_B^-} (u_1u_n^\theta)(t) & = \lim_{t\to t_B^-} \frac{\lambda_{1,0}-\lambda_{n,0}}{-\theta -1} \frac{u_n^\theta(t)}{u_n'(t)} \\
& = \lim_{t\to t_B^-} \frac{\lambda_{1,0}-\lambda_{n,0}}{-\theta -1} \frac{(u_n^\theta u_1)(t)}{(u_n' u_1)(t)}.  
\end{align*}
Because the final limit exists, 
\begin{equation*}
C = \frac{2p}{\theta + 1} \frac{C}{p} = \frac{2C}{\theta+1}.
\end{equation*}
Then, as we assumed that $\theta >1 $, it follows that $C=0$, as desired. 
\end{proof}

\sk In the following theorem, we claim that the case of $p=q$ implies (c) of (\rom{2}) in Theorem \ref{mainthm+}.  
\begin{thm}\label{thm_p=q}
Suppose that $[0,t_B)$ be the maximum interval of existence for \eqref{main}. Define $u_i$ as \eqref{def_ui}, and let 
\begin{equation*}
\lim_{t\to t_B^-} u_1'(t) u_n(t) = -p, \quad \lim_{t\to t_B^-} u_1(t) u_n'(t) = q.
\end{equation*}
If   
\begin{equation*}
p=q,
\end{equation*}
then $J=2$, $n=4$, and  
\begin{equation*}
(\lambda_{1,0}-\lambda_{3,0})(\lambda_{1,0}-\lambda_{4,0})=:A_0 = k\rho_0.
\end{equation*}
Moreover, this implies (c) of (\rom{2}) in Theorem \ref{mainthm+}. 
\end{thm}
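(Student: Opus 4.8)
The plan is to extract enough structural information from the two key relations \eqref{claim2-6} and \eqref{claim2-7}, together with Lemma \ref{lemma:claim2_2}, to pin down $J$, $n$, and the relation $A_0 = k\rho_0$. First I would add \eqref{claim2-6} and \eqref{claim2-7} to obtain
\begin{equation*}
\lambda_1(t) + \lambda_n(t) - \lambda_{1,0} - \lambda_{n,0} = -2\int_0^t \eta^2(s)\,ds + 2\delta(t) = -2\eta(t) - \lambda_{1,0} - \lambda_{n,0},
\end{equation*}
where the last equality uses \eqref{claim2-9} and $\lambda_1 + \lambda_n = -2\eta$. This yields the identity
\begin{equation*}
\eta(t) = \int_0^t \eta^2(s)\,ds - \delta(t),
\end{equation*}
which is a clean scalar relation connecting $\eta$ and $\delta$. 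Since $\delta$ is the lower-order correction in \eqref{claim2-3}, this forces $\eta$ itself to behave like $\int_0^t \eta^2$; morally $\eta$ should then be comparable to $p/(u_1u_n)$ (the leading singular quantity). I would make this precise: combining the above with \eqref{claim2-3} rewritten as $\int_0^t (p/(u_1u_n))^2\,ds + \delta = \frac kn\int_0^t\rho - \omega^2 t$, and using that $\rho = \rho_0\prod u_i^{-1}$, I can relate $\delta$ to the remaining variables $u_i$ for $3\le i\le n$ (which are the $\lambda_i$ with $J< i \le n$ other than $\lambda_n$, represented via \eqref{temp61} as combinations of $u_1, u_n$).

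Next I would determine $J$ and $n$. From Theorem \ref{thm:case1}, the case $p=q$ is incompatible with cases \textbf{(1)} and \textbf{(2-a)} since those force $\xi^2+\xi=0$, i.e. $q/(p-q)=0$, i.e. $q=0<p$, contradicting $p=q>0$ (note $p+q = \lambda_{n,0}-\lambda_{1,0}>0$ so $p=q>0$). Likewise case \textbf{(3)} with $J\ge 3$, $n>2J$ would give $\xi^2 + \xi = k\rho_0 t_B^2 R_0/n$ with two \emph{distinct} roots $\xi_1 = -p/(p-q)$ and $\xi_n = q/(p-q)$; but $p=q$ makes these expressions degenerate (the denominator vanishes), which is exactly the signal that the leading-order ansatz $\lambda_j \sim \xi_j/(t_B-t)$ breaks down — consistent with Lemma \ref{lemma:claim2_2}, which shows $u_1u_n$ cannot decay at a clean power rate. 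So the only surviving possibility is \textbf{(2-b)}: $J=2$, $n=4$. With $J=2$ and $n=4$, Lemma \ref{lem_3} gives $\lambda_1 = \lambda_2$ and $\lim \lambda_3/\lambda_4 = 1$, and \eqref{temp61} gives $u_3$ explicitly as $\frac{\lambda_{4,0}-\lambda_{3,0}}{\lambda_{4,0}-\lambda_{1,0}}u_1 + \frac{\lambda_{3,0}-\lambda_{1,0}}{\lambda_{4,0}-\lambda_{1,0}}u_4$.

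With the dimensions fixed, I would compute $\delta$ explicitly. We have $\rho = \rho_0/(u_1 u_2 u_3 u_4) = \rho_0/(u_1^2 u_3 u_4)$, and near $t_B$, $u_1 \to 0$ with $u_1'(t_B) = 0$ (case \eqref{t_case2}), while $u_4\to\infty$ and $u_3 \sim \frac{\lambda_{3,0}-\lambda_{1,0}}{\lambda_{4,0}-\lambda_{1,0}}u_4$. So $\frac kn \rho = \frac{k\rho_0}{4 u_1^2 u_3 u_4} \sim \frac{k\rho_0(\lambda_{4,0}-\lambda_{1,0})}{4(\lambda_{3,0}-\lambda_{1,0})}\cdot\frac1{u_1^2 u_4^2}$, whereas the leading singular term in \eqref{claim2-3} is $(p/(u_1u_n))^2 = p^2/(u_1 u_4)^2 = \big(\frac{\lambda_{4,0}-\lambda_{1,0}}{2}\big)^2/(u_1u_4)^2$. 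Matching the coefficients of the leading singular term $1/(u_1u_4)^2$ in $\frac kn\int\rho$ versus $\int (p/(u_1u_n))^2$ — which must agree, since $\delta$ is lower order by \eqref{claim2-2} — gives
\begin{equation*}
\left(\frac{\lambda_{4,0}-\lambda_{1,0}}{2}\right)^2 = \frac{k\rho_0(\lambda_{4,0}-\lambda_{1,0})}{4(\lambda_{3,0}-\lambda_{1,0})},
\end{equation*}
i.e. $(\lambda_{4,0}-\lambda_{1,0})(\lambda_{3,0}-\lambda_{1,0}) = k\rho_0$, which is exactly $A_0 = k\rho_0$ (recalling $A_0 = (\lambda_{1,0}-\lambda_{3,0})(\lambda_{1,0}-\lambda_{4,0})$ and both factors are negative). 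Finally, to get the blow-up rates in (c) of (\rom{2}), I would use the identity $\eta = \int_0^t\eta^2 - \delta$ together with the sharpened estimate on $\delta$ to show $\int_0^t\eta^2$ converges to a finite limit or grows slower than $1/(t_B-t)$, forcing $\eta(t) = o(1/(t_B-t))$; then from $\lambda_1 = -p/(u_1u_n) - \eta$ and the fact that $u_1 u_n \to 0$ faster than any power (Lemma \ref{lemma:claim2_2}), I deduce $p/(u_1 u_n)$ dominates and blows up faster than $1/(t_B-t)$, and a more careful balance in \eqref{claim2-6}–\eqref{claim2-7} (now that the leading terms cancel in the sum) pins the rate to $1/(t_B-t)^2$, with $\rho \sim \frac k4 C^2/(t_B-t)^4$ following from $\rho = \frac nk\big((p/(u_1u_n))^2 + \delta' + \omega^2\big)$ up to the established matching.

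\textbf{Main obstacle.} The hard part will be making the ``second singular term'' analysis rigorous — that is, controlling $\delta$ and $\eta$ precisely enough to conclude that the leading singular coefficients must match and that $\eta$ is genuinely subdominant, \emph{without} knowing a priori that $\eta$ or $u_1u_n$ has power-law behavior (indeed Lemma \ref{lemma:claim2_2} warns it does not). The oscillation-type worry flagged after \eqref{claim2-10} is exactly the issue: one must rule out that $\eta$, or the ratio $\delta/\int(p/(u_1u_n))^2$, oscillates in a way that invalidates the coefficient-matching. I expect this is handled by exploiting monotonicity of $u_1$ and $u_4$ near $t_B$ (from the sign analysis in Section 3) plus the integral identity $\eta = \int_0^t\eta^2 - \delta$, which, being an equation rather than an inequality, gives tight two-sided control; still, converting ``leading terms of two integrals agree'' into an honest equality of constants is the delicate step, and I would expect the bulk of the proof's length to go there.
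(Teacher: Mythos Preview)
Your proposal has two genuine gaps.

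\textbf{Determining $J$ and $n$.} You invoke Theorem \ref{thm:case1} to eliminate cases and conclude $J=2$, $n=4$. But Theorem \ref{thm:case1} has the hypothesis $p>q$; its case split \textbf{(1)}, \textbf{(2-a)}, \textbf{(2-b)}, \textbf{(3)} and the very definition of $\xi_1 = -p/(p-q)$, $\xi_n = q/(p-q)$ presuppose $p\neq q$. You cannot use that theorem to rule anything out when $p=q$ --- the statement ``case \textbf{(1)} forces $q=0$'' is vacuous here because case \textbf{(1)} never applies. The paper does \emph{not} argue by elimination from the $p>q$ analysis. Instead it uses \eqref{claim2-2} to get
\[
\lim_{t\to t_B^-}\frac{\int_0^t (u_1u_n)^{-2}\,ds}{\int_0^t \prod_i u_i^{-1}\,ds}=\frac{k\rho_0}{np^2},
\]
then applies Cauchy's mean value theorem to produce a sequence $\tau_l\to t_B$ along which $\prod_i u_i(\tau_l)/(u_1^2 u_n^2)(\tau_l)$ converges to a nonzero constant. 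Via Lemma \ref{lem_3} this becomes $u_1^{J-2}(\tau_l)u_n^{n-J-2}(\tau_l)\to\text{(nonzero)}$. If $J\neq 2$, taking $(J-2)$-th roots gives a nonzero limit for $u_1 u_n^{(n-J-2)/(J-2)}$ along the sequence; when the exponent $\le 1$ this contradicts Lemma \ref{lemma:claim2_2} directly, and when the exponent $>1$ one first shows $(u_1 u_n^{1+\ep})'>0$ near $t_B$ (using $p=q$) so the full limit exists, then invokes Lemma \ref{lemma:claim2_2}. Hence $J=2$, and then $n-J-2=0$ (else the same argument applies to $u_n^{n-4}$), giving $n=4$. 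Your coefficient-matching for $A_0=k\rho_0$ is then essentially the paper's computation, and it is fine once $J=2$, $n=4$ are in hand.

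\textbf{The behavior of $\eta$.} You claim $\eta(t)=o(1/(t_B-t))$ and that $u_1u_n\to 0$ faster than any power. Both are incorrect: the paper shows $\eta(t)\sim 1/(t_B-t)$ and $(u_1u_4)(t)\sim C(t_B-t)^2$. The route is exactly the scalar identity you wrote down (with the constant): $\eta(t)=\int_0^t\eta^2(s)\,ds-\delta(t)-\tfrac{\lambda_{1,0}+\lambda_{4,0}}{2}$, hence $\eta'=\eta^2-\delta'$. A separate computation (Lemma \ref{lemma_delta}, using the explicit form of $u_3$ and that $u_4\to\infty$) gives $\delta'\to-\omega^2$, so near $t_B$ the ODE is essentially $\eta'\approx\eta^2+\omega^2$, which forces $\eta\to\infty$ with $(t_B-t)\eta(t)\to 1$ (via an arctan integration). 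Writing $\eta=1/(t_B-t)+\sigma$ with $\sigma$ integrable and integrating \eqref{claim2-9} then yields $(u_1u_4)(t)\sim C(t_B-t)^2$, hence $\lambda_1\sim -C/(t_B-t)^2$, $\lambda_4\sim C/(t_B-t)^2$, and $\rho\sim \tfrac{k}{4}C^2/(t_B-t)^4$. Your heuristic that $\eta$ is subdominant to $p/(u_1u_n)$ is right, but the orders are $1/(t_B-t)$ versus $1/(t_B-t)^2$, not $o(1/(t_B-t))$ versus super-polynomial; Lemma \ref{lemma:claim2_2} concerns $u_1 u_n^\theta$, not the rate of $u_1 u_n$ itself.
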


\proof
Recall \eqref{rho_u}
\begin{equation*}
\rho = \rho_0 \prod_{i=1}^n \frac{1}{u_i}. 
\end{equation*}
From \eqref{claim2-3} and \eqref{claim2-2},
\begin{equation*}
\dis \lim_{t\to t_B^-} \frac{\int_0^t (u_1u_n)^{-2}ds}{\int_0^t  \prod_{i=1}^n u_i^{-1} ds} = \frac{k\rho_0}{np^2}.
\end{equation*}
We apply Cauchy's mean value theorem, to obtain 
\begin{equation*}
\frac{\int_0^t (u_1u_n)^{-2} ds}{\int_0^t \prod_{i=1}^n u_i^{-1} ds} \frac{1- \frac{\int_0^{t_1} (u_1u_n)^{-2} ds}{\int_0^t (u_1u_n)^{-2} ds}}{1-\frac{\int_0^{t_1} \prod_{i=1}^n u_i^{-1} ds}{\int_0^{t} \prod_{i=1}^n u_i^{-1} ds} } = \frac{\prod_{i=1}^n u_i(\tau)} {u_1^2(\tau)u_n^2(\tau)}
\end{equation*}  
 for some $t_1<\tau<t$. Owing to the convergence of the left-hand side as $t\to t_B^-$, we can construct a sequence $\{\tau_l\}_{l=1}^\infty$ converging  to $t_B$ such that 
 \begin{equation*}
\lim_{l\to \infty} \frac{\prod_{i=1}^n u_i(\tau_l)} {u_1^2 (\tau_l)u_n^2(\tau_l)} = \frac{k \rho_0}{np^2}. 
 \end{equation*} 
It follows from Lemma \ref{lem_3}  that 
\begin{equation}\label{6.1}
\lim_{l\to \infty} u_1^{J-2}(\tau_l)u_n^{n-J-2}(\tau_l) = \frac{k \rho_0}{np^2} \prod_{j=J+1}^{n-1} \frac{\lambda_{n,0} -\lambda_{1,0}}{\lambda_{j,0}-\lambda_{1,0}}. 
\end{equation} 
If it is assumed that $J\neq 2$, then 
\begin{equation}\label{temp73}
\lim_{l\to \infty} u_1(\tau_l)u_n^{\frac{n-J-2}{J-2}}(\tau_l) =   \left(\frac{k \rho_0}{np^2} \prod_{j=J+1}^{n-1} \frac{\lambda_{n,0} -\lambda_{1,0}}{\lambda_{j,0}-\lambda_{1,0}} \right)^{\frac{1}{J-2}}. 
\end{equation}
If it is additionally assumed that $(n-J-2)/({J-2}) \leq 1$, then Lemma \ref{lemma:claim2_2} implies that 
\begin{equation*} 
\dis \left(\frac{k \rho_0}{np^2} \prod_{j=J+1}^{n-1} \frac{\lambda_{n,0} -\lambda_{1,0}}{\lambda_{j,0}-\lambda_{1,0}} \right)^{\frac{1}{J-2}} =0,
\end{equation*}
which is not possible. The assumption that $(n-J-2)/({J-2}) > 1$ also yields a contradiction. Indeed, under this assumption one can show that $u_1u_n^{({n-J-2})/({J-2})}$ is an increasing function in a neighborhood of $t_B$, by showing that  for any $\ep>0$ there exists $t_1 \in (0,t_B)$ such that      
\begin{equation}\label{temp91}
(u_1u_n^{1+\ep})' (t) >0,  \quad \quad t_1 < t < t_B  
\end{equation}
in the case of $p=q$. Then, together with  \eqref{temp73} we have  

\begin{equation*}
\lim_{t\to t_B^-} u_1(t)u_n^{\frac{n-J-2}{J-2}}(t) =  \left(\frac{k \rho_0}{np^2} \prod_{j=J+1}^{n-1} \frac{\lambda_{n0} -\lambda_{10}}{\lambda_{j0}-\lambda_{10}} \right)^{\frac{1}{J-2}}.   
\end{equation*} 
Now, we apply Lemma \ref{lemma:claim2_2} to obtain   
\begin{equation*}
\left(\frac{k \rho_0}{np^2} \prod_{j=J+1}^{n-1} \frac{\lambda_{n,0} -\lambda_{1,0}}{\lambda_{j,0}-\lambda_{1,0}}\right)^{\frac{1}{J-2}}  =0,
\end{equation*}
which is also not possible. 
 
\sk Hence, $J=2$. Because the case of $p=q$ is corresponds to \eqref{t_case2}, we must have that $n=4$. Moreover, substituting \eqref{p=q} into \eqref{6.1} with $J=2$ and $n=4$ yields
\begin{align}
k\rho_0 &= (\lambda_{4,0}-\lambda_{1,0})(\lambda_{3,0}-\lambda_{1,0}),\label{k_rho}
\end{align}
as desired. 

\sk It remains to verify the solution behaviors described in (c) of (\rom{2}) in Theorem \ref{mainthm+}. We first state a lemma describing the behavior of $\delta$ near $t_B$.    
\begin{lemma}\label{lemma_delta}
Under the hypothesis of Theorem \ref{thm_p=q}, $\delta$ defined in \eqref{claim2-3} satisfies  
\begin{equation*}
\lim_{t\to t_B^-} \delta'(t) = -\omega^2.
\end{equation*} 
\end{lemma}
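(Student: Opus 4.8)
The plan is to turn the implicit definition \eqref{claim2-3} of $\delta$ into an explicit formula for $\delta'$, collapse that formula to a single term using the algebraic constraints forced by Theorem \ref{thm_p=q}, and then show that term tends to $0$ by a monotonicity-plus-rigidity argument. Concretely, differentiating \eqref{claim2-3} (recall $\delta(0)=0$) gives
\begin{equation*}
\delta'(t)=\frac{k}{n}\rho(t)-\omega^{2}-\left(\frac{p}{(u_{1}u_{n})(t)}\right)^{2},
\end{equation*}
so the assertion is equivalent to $\frac{k}{n}\rho-(p/(u_{1}u_{n}))^{2}\to 0$ as $t\to t_{B}^{-}$. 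By Theorem \ref{thm_p=q} we are in the situation $J=2$, $n=4$ and $A_{0}=k\rho_{0}$; moreover $u_{1}=u_{2}$ since $\lambda_{1}=\lambda_{2}$ by Lemma \ref{lem:oreder}, so \eqref{rho_u} reads $\rho=\rho_{0}/(u_{1}^{2}u_{3}u_{4})$. Substituting the linear representation \eqref{temp61} of $u_{3}$ in terms of $u_{1}$ and $u_{4}$, together with $k\rho_{0}=(\lambda_{4,0}-\lambda_{1,0})(\lambda_{3,0}-\lambda_{1,0})$ from \eqref{k_rho} and $p=q=(\lambda_{4,0}-\lambda_{1,0})/2$ from \eqref{p=q}, a short computation collapses the difference to
\begin{equation*}
\frac{k}{4}\rho(t)-\left(\frac{p}{(u_{1}u_{4})(t)}\right)^{2}=-\frac{(\lambda_{4,0}-\lambda_{1,0})(\lambda_{4,0}-\lambda_{3,0})}{4}\cdot\frac{1}{(u_{1}u_{3}u_{4}^{2})(t)}.
\end{equation*}
The point of this step is that $A_{0}=k\rho_{0}$ is precisely the identity that makes the leading (blowing-up) parts of $\frac{k}{4}\rho$ and $(p/(u_{1}u_{4}))^{2}$ cancel, leaving the displayed remainder.

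If $\lambda_{3,0}=\lambda_{4,0}$ the right-hand side vanishes identically and $\delta'\equiv-\omega^{2}$, so from now on I would assume $\lambda_{3,0}<\lambda_{4,0}$, i.e. the constant $c_{0}:=(\lambda_{4,0}-\lambda_{1,0})(\lambda_{4,0}-\lambda_{3,0})/4$ is strictly positive. It then remains to show $(u_{1}u_{3}u_{4}^{2})(t)\to\infty$. Since $u_{3}/u_{4}\to(\lambda_{3,0}-\lambda_{1,0})/(\lambda_{4,0}-\lambda_{1,0})>0$ by Lemma \ref{lem_3}, this is equivalent to $(u_{1}u_{4}^{3})(t)\to\infty$. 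Here is the delicate point: because $p=q$ corresponds to \eqref{t_case2}, we have $u_{4}\to\infty$ while $u_{1}\to 0$, so $u_{1}u_{4}^{3}$ is a genuine $0\cdot\infty$ indeterminate form, and no crude bound on the two factors settles it. I would resolve it by monotonicity and rigidity: \eqref{temp91}, applied with exponent $1+\varepsilon=3$, shows $(u_{1}u_{4}^{3})'>0$ in a left neighborhood of $t_{B}$, so $u_{1}u_{4}^{3}$ has a limit in $(0,\infty]$; if that limit were finite, Lemma \ref{lemma:claim2_2} (with $\theta=3>1$) would force it to equal $0$, which is impossible for a positive function that is eventually increasing. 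Hence $u_{1}u_{4}^{3}\to\infty$, so $1/(u_{1}u_{3}u_{4}^{2})\to 0$, and therefore $\delta'(t)\to-\omega^{2}$.

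I expect the main obstacle to be exactly that $0\cdot\infty$ step. Both $\frac{k}{4}\rho$ and $(p/(u_{1}u_{4}))^{2}$ blow up, and although their difference is controlled in an averaged (integrated) sense by \eqref{claim2-2}, it is not obviously small pointwise; the argument must lean on the sharp eventual monotonicity \eqref{temp91} together with the rigidity of Lemma \ref{lemma:claim2_2}. By contrast, the algebraic reduction in the first step is routine once the cancellation $A_{0}=k\rho_{0}$ is exploited and $u_{3}$ is expressed through the linearly independent pair $u_{1},u_{4}$.
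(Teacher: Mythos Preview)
Your proposal is correct and follows essentially the same route as the paper: differentiate \eqref{claim2-3}, specialize to $J=2$, $n=4$, express $u_{3}$ via \eqref{temp61}, and use $k\rho_{0}=A_{0}$ together with $p=(\lambda_{4,0}-\lambda_{1,0})/2$ to collapse $\delta'+\omega^{2}$ to a single term. The only difference is in the final limit. The paper factors the remainder as
\[
\delta'+\omega^{2}=\frac{-p^{2}(\lambda_{4,0}-\lambda_{3,0})}{(\lambda_{4,0}-\lambda_{3,0})u_{1}+(\lambda_{3,0}-\lambda_{1,0})u_{4}}\cdot\frac{1}{u_{1}u_{4}^{2}},
\]
so that the first factor tends to $0$ (since $u_{4}\to\infty$ in case \eqref{t_case2}) while $1/(u_{1}u_{4}^{2})$ is merely bounded, being eventually decreasing by \eqref{temp91} with $\varepsilon=1$; no appeal to Lemma \ref{lemma:claim2_2} is needed. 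Your grouping leaves the full $1/(u_{1}u_{3}u_{4}^{2})$ to handle, which you correctly dispatch by combining \eqref{temp91} (with $\varepsilon=2$) and Lemma \ref{lemma:claim2_2} to force $u_{1}u_{4}^{3}\to\infty$. Both arguments are valid; the paper's factoring is a bit slicker, while yours makes the role of the rigidity lemma more explicit.
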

\begin{proof}
We have shown that $J=2$ and $n=4$ when $p=q$. Thus, we deduce from 
 \eqref{claim2-3} and \eqref{rho_u} that
\begin{align*}
\delta'  = \frac{1}{u_1^2u_4} \left( \frac{k \rho_0}{4}\frac{1}{u_3} - \frac{p^2}{u_4} \right) -\omega^2.
\end{align*}
Using the representation in \eqref{temp61},
\begin{equation*}
u_3 = \frac{\lambda_{4,0}-\lambda_{3,0}}{\lambda_{4,0}-\lambda_{1,0}} u_1 + \frac{\lambda_{3,0}-\lambda_{1,0}}{\lambda_{4,0}-\lambda_{1,0}} u_4,
\end{equation*}
we have that 
\begin{equation}\label{ex_delta}
\delta'= \frac{-p^2 (\lambda_{4,0}-\lambda_{3,0})} {(\lambda_{4,0}-\lambda_{3,0})u_1  + (\lambda_{3,0}-\lambda_{1,0}) u_4}  \frac{1}{u_1 u_4^2} -\omega^2. 
\end{equation}
The condition $p=q$ implies that in a neighborhood of $t_B$,
\begin{equation*}
\lim_{t\to t_B^-} (u_1u_4^2)'(t) > 0,
\end{equation*}
as mentioned in \eqref{temp91}. Thus, $1/({u_1 u_4^2})$ is a decreasing function near $t_B$ and converges. Moreover, in the case of \eqref{t_case2} we have that   
\begin{equation*}
\frac{-p^2 (\lambda_{4,0}-\lambda_{3,0})} {(\lambda_{4,0}-\lambda_{3,0})u_1  + (\lambda_{3,0}-\lambda_{10}) u_4} \to 0. 
\end{equation*}    
Hence, we conclude that  
\begin{equation*}
\lim_{t\to t_B^-} \delta'(t) = -\omega^2. 
\end{equation*} 
\end{proof}

\begin{proof}[Continue Proof of Theorem \ref{thm_p=q}]
Substituting \eqref{claim2-4} and \eqref{claim2-5} into \eqref{claim2-6} and \eqref{claim2-7} yields \begin{align*}
-\frac{p}{(u_1u_4)(t)} - \eta(t) -\lambda_{1,0} &= \int_0^t \Big(-2\frac{p\eta(s)}{(u_1u_4)(s)} - \eta^2(s)\Big)   ds  + \delta(t),\\
 \frac{p}{(u_1u_4)(t)} -\eta(t)  -\lambda_{4,0} &= \int_0^t \Big(2\frac{p\eta(s)}{(u_1u_4)(s)} - \eta^2(s)\Big)   ds  + \delta(t). 
\end{align*}
We deduce that 
\begin{equation*}
\eta(t)  = \int_0^t \eta^2(s) ds - \delta(t)- \frac{\lambda_{1,0}+\lambda_{4,0}}{2}.  
\end{equation*}
Notice that the integral equation together with \eqref{claim2-10} and Lemma \ref{lemma_delta} yields  
\begin{equation}\label{inf_eta}
\eta(t) \to  \infty, \quad \text{as~} t \to t_B^-.
\end{equation}
The integral equation can be rewritten as 
\begin{equation}\label{eta_de}
\eta' = \eta^2 - \delta', \quad \eta(0) = - \frac{\lambda_{1,0} + \lambda_{4,0}}{2}.
\end{equation}
Then, for $t$ sufficiently close to $t_B$ so that   
\begin{equation*}
 0 < \int_{t}^{t_B} \frac{\eta^2(s) - \delta'(s)}{\eta^2(s) +1} ds< \pi,  
\end{equation*}
we have that
\begin{align*}
\arctan({\eta(\tau)}) -  \arctan({\eta(t)}) = \int_t^{\tau} \frac{\eta^2(s) - \delta'(s)}{\eta^2(s) +1} ds, \quad t < \tau  < t_B.
\end{align*}
Now, send $\tau \to t_B^-$ to obtain   
\begin{align*}
\eta(t) = \cot \left(\int_{t}^{t_B} \frac{\eta^2(s) - \delta'(s)}{\eta^2(s) +1} ds\right). 
\end{align*}   
Owing to Lemma \ref{lemma_delta} and \eqref{inf_eta}, we have 
\begin{equation*}
\lim_{t\to t_B^-} (t_B-t) \eta(t) = 1
\end{equation*}
and for some $\sigma$, 
\begin{equation}\label{eta_form}
\eta(t) = \frac{1}{t_B-t} + \sigma(t), \quad \sigma(t) = o(t_B-t).
\end{equation}
Moreover, one can show that $\sigma$ is integrable, i.e., 
\begin{equation}\label{bounded_sigma}
\Big|\int_0^{t_B} \sigma(s) ds\Big| < \infty.
\end{equation}
Indeed, substituting \eqref{eta_form} into \eqref{eta_de} yields 
\begin{align*}
\sigma'(t) = \sigma^2 (t) + \frac{2\sigma(t)}{t_B-t} - \delta'(t),
\end{align*}
and for $t_1 < t <t_B$
\begin{equation*}
(t_B -t ) \sigma(t) - (t_B - t_1)\sigma(t_1) = \int_{t_1}^t (t_B-s)\sigma^2(s) ds + \int_{t_1}^t \sigma(s) ds - \int_{t_1}^t (t_B-s) \delta'(s) ds. 
\end{equation*}
If $\int_{t_1}^t (t_B-s)\sigma^2(s) ds$ were unbounded, then $\int_{t_1}^t \sigma(s) ds \to -\infty$ as $t\to t_B^-$, as the left-hand side and $\delta'(t)$  converge. However, this yields a contradiction, because  
\begin{equation*}
\frac{\int_{t_1}^t (t_B-s)\sigma^2(s) ds}{\int_{t_1}^t \sigma(s) ds} \to 0.
\end{equation*}    
Thus, for $t_1$ sufficiently close to $t_B$, 
\begin{align*}
\Big| (t_B -t ) \sigma(t) - (t_B - t_1)\sigma(t_1)  \Big| &= \left| \int_{t_1}^t (t_B-s)\sigma^2(s) ds + \int_{t_1}^t \sigma(s) ds - \int_{t_1}^t (t_B-s) \delta'(s) ds \right| \\
& >  \int_{t_1}^t (t_B-s)\sigma^2(s) ds \to \infty, \quad \text{as~} t\to t_B^-.
\end{align*}
Hence,  $\int_{t_1}^t (t_B-s)\sigma^2(s) ds$ converges. Thus we have \eqref{bounded_sigma}.  

\sk We now estimate $\lambda_i$ and $\rho$ from the representation of $\eta$. Integrating  \eqref{claim2-9} together with \eqref{eta_form} yields  
\begin{align*}
(u_1u_4)(t) &= \frac{(t_B-t)^2}{t_B^2 e^{2 \int_0^t \sigma(s) ds }}.   
\end{align*} 
Substituting this representation and \eqref{eta_form} into \eqref{claim2-4},  \eqref{claim2-5}, and \eqref{claim2-3} yields  
\begin{align*}
\lambda_1 (t) &= - \frac{p t_B^2 e^{2 \int_0^t \sigma(s) ds } }{(t_B-t)^2}   - \frac{1}{(t_B-t)} - \sigma(t). \\
\lambda_4 (t) &=   \frac{p t_B^2 e^{2 \int_0^t \sigma(s) ds }}{(t_B-t)^2}   - \frac{1}{(t_B-t)} - \sigma(t), 
 \end{align*} 
and 
\begin{align*}
\rho(t)  = \frac{4 p^2 t_B^4 e^{4 \int_0^t \sigma(s) ds } } {k(t_B-t)^4} + \frac{4\delta'(t)}{k} + c_b. 
\end{align*} 
These representations, together with \eqref{bounded_sigma}, imply (c) of (\rom{2}) in Theorem \ref{mainthm+}. 
\end{proof}

\sk We close this section by providing  a specific example with $p=q$.  

\begin{ex} 
Recall that 
\begin{equation*}
\omega = \sqrt{\frac{kc_b}{4}}, \quad p=q=\frac{\lambda_{4,0} -\lambda_{1,0}}{2}. 
\end{equation*}
Let $\lambda_{3,0} = \lambda_{4,0}$. Then, we have that 
\begin{align*}
k \rho_0 =1, \quad \delta'(t) = -\omega^2,
\end{align*}
from \eqref{k_rho} and \eqref{ex_delta}, respectively. Furthermore, we can obtain an explicit formula for $\eta$ by solving \eqref{eta_de}: 
\begin{equation*}
\eta(t) = \omega \tan \left( \omega  t  - \arctan \left( \frac{\lambda_{1,0}+\lambda_{4,0}}{2\omega}\right)\right). 
\end{equation*}
The maximum interval of existence follows from the domain of $\eta$:
\begin{equation*}
t_B = \frac{\pi/2 + \arctan \left( \frac{\lambda_{1,0}+\lambda_{4,0}}{2\omega} \right) }{\omega}. 
\end{equation*}
Then, integrating \eqref{claim2-9} yields  
\begin{align*}
(u_1u_4)(t) &= \left( \left( \frac{\lambda_{1,0}+\lambda_{4,0}}{2\omega} \right)^2 +1 \right) \cos^2 \left(\omega t - \arctan \left( \frac{\lambda_{1,0}+\lambda_{4,0}}{2\omega} \right)\right).
\end{align*}
Finally, we deduce from \eqref{claim2-4}, \eqref{claim2-5}, and \eqref{claim2-3} that    
\begin{align*}
\lambda_1 &=\lambda_1= - \frac{p}{\left( \frac{\lambda_{1,0}+\lambda_{4,0}}{2\omega} \right)^2 +1 } \sec^2 \left(\omega t - \arctan \left( \frac{\lambda_{1,0}+\lambda_{4,0}}{2\omega} \right)\right) - \omega \tan \left( \omega  t  - \arctan \left( \frac{\lambda_{1,0}+\lambda_{4,0}}{2\omega}\right)\right), \\
\lambda_3 &=\lambda_4 =  \frac{p}{\left( \frac{\lambda_{1,0}+\lambda_{4,0}}{2\omega} \right)^2 +1 } \sec^2 \left(\omega t - \arctan \left( \frac{\lambda_{1,0}+\lambda_{4,0}}{2\omega} \right)\right) - \omega \tan \left( \omega  t  - \arctan \left( \frac{\lambda_{1,0}+\lambda_{4,0}}{2\omega}\right)\right),\\
\rho &=   \frac{\rho_0}{ \left( \left( \frac{\lambda_{1,0}+\lambda_{4,0}}{2\omega} \right)^2 +1\right)^2 } \sec^4 \left(\omega t - \arctan \left( \frac{\lambda_{1,0}+\lambda_{4,0}}{2\omega} \right)\right). 
\end{align*} 
\end{ex}

\section*{Acknowledgments}  Liu was partially supported by the National Science Foundation under Grant DMS1812666. Shin was supported by newly appointed professor research fund of Hanbat National University and the National Research Foundation under Grant NRF-2017R1E1A1A03070498.
\bigskip

\bibliographystyle{plain} 

 \end{document}